\documentclass[11pt,reqno]{amsart}
 
\usepackage[T1]{fontenc}
\usepackage[utf8]{inputenc}
\usepackage{lmodern}
\usepackage[a4paper,margin=1.08in]{geometry}
\usepackage{amsmath,amssymb,amsfonts,amsthm,mathtools,mathrsfs}
\usepackage{microtype}
\usepackage{enumitem}
\usepackage{url}
\usepackage[colorlinks=true,linkcolor=blue,citecolor=blue,urlcolor=blue,filecolor=blue]{hyperref}
\usepackage{color}

\allowdisplaybreaks
\numberwithin{equation}{section}

\newtheorem{theorem}{Theorem}[section]

\newtheorem{lemma}[theorem]{Lemma}
\newtheorem{corollary}[theorem]{Corollary}

\theoremstyle{definition}

\theoremstyle{remark}
\newtheorem{remark}[theorem]{Remark}
\theoremstyle{remark}

\hypersetup{
  colorlinks=true,
  linkcolor=blue,
  citecolor=blue,
  urlcolor=blue,
  filecolor=blue,
  pdftitle={Weak type (1,1) estimates for noncommutative rough   singular integrals}
}

\newcommand{\M}{\mathcal M}
\newcommand{\A}{\mathcal A}
\newcommand{\Q}{\mathcal Q}
\newcommand{\R}{\mathbb R}
\newcommand{\Z}{\mathbb Z}
\newcommand{\C}{\mathbb C}
\newcommand{\E}{\mathbb{E}}
\newcommand{\one}{\mathbf 1}
\newcommand{\dd}{\,\mathrm d}
\newcommand{\bd}{b_{\mathrm d}}
\newcommand{\boff}{b_{\mathrm{off}}}
\newcommand{\lo}{\mathrm{lo}}
\newcommand{\hi}{\mathrm{hi}}
\newcommand{\core}{\mathscr D_{\M}}
\newcommand{\norm}[2][]{\lVert #2\rVert_{#1}}
\DeclareMathOperator{\supp}{supp}

\DeclareMathOperator{\pv}{p.v.}

\makeatletter
\@namedef{subjclassname@2020}{%
  \textup{2020} Mathematics Subject Classification}
\makeatother

\title[Operator-valued rough singular integrals]
{Weak type $(1,1)$ estimates for noncommutative\\ rough   singular integrals}

\author[Cao]{Junming Cao}
\address{Junming Cao, School of Mathematical Sciences, Huaqiao University, Quanzhou 362021, China} \email{caojunming@hqu.edu.cn}

\author[Li]{Wei Li}
\address{Wei Li, School of Mathematics and Statistics, Central South University, Changsha 410075, China}
\email{liwei.csu@csu.edu.cn}

\author[Wu]{Lian Wu}
\address{Lian Wu, School of Mathematics and Statistics, Central South University, Changsha 410075, China}
\email{wulian@csu.edu.cn}
\makeatletter
\@namedef{subjclassname@2020}{%
	\textup{2020} Mathematics Subject Classification}
\makeatother
 
\keywords{rough  singular integral, noncommutative $L_p$-space,
noncommutative	Calder\'on--Zygmund decomposition, microlocal decomposition}
\subjclass[2020]{Primary 42B20, 46L52; Secondary  46L51}

\begin{document}

\begin{abstract}
 
We establish the weak type \((1,1)\) endpoint estimate for operator-valued rough  singular integrals with kernels satisfying the classical \(L\log L\) integrability and cancellation conditions. This extends Seeger's endpoint theorem [{J. Amer. Math. Soc.} \textbf{9} (1996)] to the noncommutative setting. A key ingredient is an off-diagonal analysis that controls interactions between different kernel scales and avoids an additional logarithmic loss. 
	\end{abstract}
	
	\maketitle
	
	\section{Introduction}
	Rough homogeneous singular integrals form a basic class of singular
	integral operators that fall outside the scope of the standard
	Calder\'on--Zygmund theory, since no regularity is imposed on the angular part of the kernel.
Let $\Omega\in L_1(S^{d-1})$ be complex-valued and extend it
homogeneously of degree zero to $\R^d\setminus\{0\}$. We consider
	\begin{equation}\label{eq:mainoperator}
		T_\Omega f(x)=\pv\int_{\R^d}
		\frac{\Omega(y/|y|)}{|y|^d}f(x-y)\dd y,
		\qquad d\geq2,
	\end{equation}
	under the cancellation condition
	\begin{equation}\label{eq:meanzero}
		\int_{S^{d-1}}\Omega(\theta)\dd\sigma(\theta)=0.
	\end{equation}
	Calder\'on and Zygmund \cite{CZ1956} established the fundamental
	$L_p$ boundedness results for rough homogeneous singular integrals $	T_\Omega $
	using the method of rotations.
	
	The weak endpoint theory is substantially more delicate. In dimension
	two, Christ and Rubio de Francia \cite{ChristRubio} proved the weak
	type $(1,1)$ boundedness of $T_\Omega$ under the condition
	$\Omega\in L(\log^+L)(S^{1})$, and meanwhile  Hofmann \cite{Hofmann}
	obtained a corresponding result under a slightly stronger assumption.
	In arbitrary dimension, Seeger \cite{Seeger} established the weak
	type $(1,1)$ estimate under the conditions \eqref{eq:meanzero} and
	$\Omega\in L(\log^+L)(S^{d-1})$. Tao \cite{Tao} subsequently extended
	the result to homogeneous convolution operators on
	homogeneous groups.
	
	A closely related problem concerns the less singular maximal operator
	\begin{equation}\label{eq-maximal-rough}
		M_\Omega f(x)
		=
		\sup_{r>0}r^{-d}
		\int_{|y|\leq r}
		\left|
		\Omega\left(\frac{y}{|y|}\right)f(x-y)
		\right|\dd y,
	\end{equation}
	which is a variant of the Hardy--Littlewood maximal function.
	Christ \cite{Christ1988} obtained a weak type $(1,1)$ estimate for
	\eqref{eq-maximal-rough} in dimension two under the assumption
	$\Omega\in L^q(S^1)$ for some $q>1$. Christ and Rubio de Francia
	\cite{ChristRubio} subsequently established the weak type $(1,1)$
	bound in higher dimensions under
	$\Omega\in L(\log^+L)(S^{d-1})$.
	
	The weak endpoint estimates for \(M_\Omega\) and \(T_\Omega\) rely on closely related ideas, but their proofs present distinct technical challenges. For the rough maximal operator, Christ and Rubio de Francia \cite{ChristRubio}  combined a \(TT^*\)-type argument with a depth-dependent geometric analysis to establish regularity estimates for the kernels of the resulting composed operators.	
   The passage from the maximal operator to the singular
	integral introduces an additional difficulty: the supremum over
	radii is replaced by a summation over dyadic scales. As a result, the \(L^2\) analysis involves cross terms between different scales. These terms do not arise in the corresponding argument for the maximal operator and make the analysis substantially more intricate in higher dimensions. To handle these
	interactions,  
	Seeger \cite{Seeger} developed a
	microlocal decomposition of the kernel and combined it with
	Fourier-analytic estimates.

	Over the past two decades, noncommutative harmonic analysis has developed
	substantially, with advances in noncommutative Littlewood--Paley theory,
	Fourier multiplier theory, maximal inequalities, and singular integral theory;
	see, for instance,
	\cite{PisierXu2003,Mei,MeiParcet,ChenXuYin,JungeMeiParcet,GonzalezJungeParcet}.
	A foundational contribution in the endpoint theory of noncommutative singular integrals
	was made by Parcet \cite{Parcet}, who introduced a noncommutative
	Calder\'on--Zygmund decomposition based on Cuculescu projections \cite{Cucu}
	and combined it with a pseudo-localization principle to establish weak type
	$(1,1)$ estimates for standard operator-valued Calder\'on--Zygmund operators.
	In his original decomposition, both the good and bad parts contain
	off-diagonal terms, and pseudo-localization is used to control those in the good part. Cadilhac \cite{Cadilhac2018} subsequently gave a
	simplified proof of Parcet's weak endpoint theorem. A further structural refinement was introduced by Cadilhac,
	Conde-Alonso and Parcet \cite{CCP}. Their improved
	Calder\'on--Zygmund decomposition takes the form
	\[
	f=g+\bd+\boff,
	\]
	where the good part is bounded and all off-diagonal terms are contained in \(b_{\mathrm{off}}\). The good part can therefore be treated directly using an \(L^2\) estimate, leaving the off-diagonal analysis entirely to \(b_{\mathrm{off}}\).
	This refinement eliminates the need for the pseudo-localization step used to control the off-diagonal good terms in Parcet’s original argument. The same regular decomposition is used by Hong, Lai and Xu
	in their work on maximal singular integrals \cite{HLX}.

	For  rough homogeneous singular integrals, Hong, Lai and Xu
	\cite[Theorem~1.5(i)]{HLX} proved the operator-valued maximal strong type $(p,p)$ estimates,
	$1<p<\infty$, under the   assumption  $\Omega\in L(\log^+L)(S^{d-1})$. Their maximal weak type $(1,1)$ result
	\cite[Theorem~1.5(ii)]{HLX} assumes
	$\Omega\in L_2(S^{d-1})$ together with the $L_2$-Dini condition. Lai \cite{Lai}  studied the operator-valued analogue of  the rough maximal operator $M_\Omega$ and proved a
	maximal weak type $(1,1)$ estimate under
	$
	\Omega\in L(\log^+L)^2(S^{d-1}).
	$
	His proof combines the noncommutative Calder\'on--Zygmund
	decomposition introduced by Parcet with a microlocal analysis
	inspired by Seeger. A Rademacher linearization reduces the problem to
	an operator of the form
	$
	\sum_j\varepsilon_jT_j,
	$
	where $(\varepsilon_j)$ is a Rademacher sequence. Orthogonality in the
	auxiliary probability variable converts the corresponding $L_2$ norm
	into a square sum over the kernel scales.
	
	The purpose of this paper is to establish the classical $L\log L$ endpoint
	for the operator-valued principal-value singular integral. For $\Omega\neq0$, put
	\begin{equation}\label{eq:angularconstant}
		C_{\Omega,1}
		=
		\int_{S^{d-1}}
		|\Omega(\theta)|
		\left(
		1+\log^+\frac{|\Omega(\theta)|}{\|\Omega\|_1}
		\right)
		\dd\sigma(\theta),
	\end{equation}
	and set $C_{0,1}=0$. The quantity in \eqref{eq:angularconstant} is homogeneous in
	$\Omega$, and its finiteness is equivalent to
	$\Omega\in L(\log^+L)(S^{d-1})$.
	
	Let $(\M,\tau)$ be a semifinite von Neumann algebra equipped with a
	normal semifinite faithful trace and set
	$
	\A=L_\infty(\R^d)\,\overline\otimes\,\M,
$ $
	\varphi=\int_{\R^d}\dd x\otimes\tau.
$
	We use the same notation $T_\Omega$ for the operator-valued
	singular integral defined on
	$
	\core=C_c^\infty(\R^d)\otimes(\M\cap L_1(\M))
$
	by the formula in \eqref{eq:mainoperator}.
	The main theorem is the following.
\begin{theorem}\label{thm:main}
	Suppose that $\Omega\in L(\log^+L)(S^{d-1})$ satisfies
	\eqref{eq:meanzero}. Then $T_\Omega$ extends uniquely to a bounded
	linear map
	\[
	T_\Omega:L_1(\A)\longrightarrow L_{1,\infty}(\A).
	\]
	Moreover,
	\begin{equation}\label{eq-main-rough}
		\norm[L_{1,\infty}(\A)]{T_\Omega f}
		\leq C_d C_{\Omega,1}\norm[L_1(\A)]{f},
	\end{equation}
	where $C_d$ depends only on the dimension.
\end{theorem}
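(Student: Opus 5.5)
We would argue along Seeger's scheme, transplanted to the noncommutative setting by means of the regular Calder\'on--Zygmund decomposition of Cadilhac, Conde-Alonso and Parcet. By homogeneity we normalize $\|\Omega\|_1=1$. We fix $f\in\core$ positive (the general case follows by splitting into positive parts) and $\lambda>0$. Cuculescu's construction on the dyadic filtration yields projections $q_k$, with $q=\bigwedge_k q_k$ and $\varphi(\one-q)\lesssim\|f\|_1/\lambda$, and the decomposition
\[
f=g+\bd+\boff .
\]
Here $\|g\|_\infty\lesssim\lambda$ and $\|g\|_1\lesssim\|f\|_1$, and $\bd=\sum_k p_k(f-f_k)p_k$ with $p_k=q_{k-1}-q_k$. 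The off-diagonal part $\boff$ collects the terms $p_i(f-f_k)p_j$ with $i\neq j$. We also let $\zeta$ be the projection that kills the dilates $5Q$ of the cubes supporting the active pieces, with $\varphi(\one-\zeta)\lesssim\|f\|_1/\lambda$.

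The good part is handled by $L_2$. Since the multiplier of $T_\Omega$ is bounded by $C\,C_{\Omega,1}$ (indeed $\Omega\in L\log L$ gives a bounded symbol), the operator-valued Plancherel theorem gives
\[
\lambda\,\varphi\bigl(|T_\Omega g|>\lambda\bigr)\lesssim C_{\Omega,1}^2\lambda^{-1}\|g\|_2^2\lesssim C_{\Omega,1}^2\|f\|_1 .
\]
To get a linear dependence on $C_{\Omega,1}$ we use the level $C_{\Omega,1}\lambda$ instead. For the bad parts it suffices to estimate $\zeta\,T_\Omega(b)\,\zeta$. The kernel is split dyadically as $K=\sum_j K_j$ with $K_j$ supported in $|y|\sim2^j$. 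Because $b$ has cube-wise mean zero and is supported away from the region cut out by $\zeta$, only the terms $K_j*b_{k}$ with $j>k$ survive, where $b_k$ is the piece at scale $2^k$; the terms with $j\le k$ are killed by the support condition. Setting $s=j-k$, we follow Seeger. We decompose $\Omega=\sum_{m\ge0}\Omega_m$ by level sets $\{|\Omega|\sim2^m\}$, with $\Omega_m$ restricted and recentred to keep the mean zero. The pieces with $2^m>2^{\epsilon s}$ are bounded trivially in $L_1$: the $L\log L$ condition gives $\sum_s\sum_{m>\epsilon s}\|\Omega_m\|_1\lesssim C_{\Omega,1}$, and $\|K_j*b_k\|_1\le\|\Omega_m\|_1\|b_k\|_1$ holds in $L_1(\A)$ because convolution with a scalar $L_1$ kernel is completely contractive.

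For the remaining pieces we use Seeger's microlocal decomposition. We partition $S^{d-1}$ into caps $\{\theta_\nu\}$ of size $2^{-s\gamma}$ and write $K_{j,s}=\sum_\nu K_{j,s}^\nu$. Each $K^\nu$ is then split with a Fourier cutoff into a part $\Gamma^\nu$ localized where $|\langle\xi/|\xi|,\theta_\nu\rangle|\lesssim2^{-s\gamma}$, which is the good microlocal part, and an error part. The error part has an $L_1$ kernel bound $2^{-s\epsilon}\|\Omega\|_1$ via integration by parts along $\theta_\nu$ combined with the mean zero of $b_k$, and it is summed in $L_1(\A)$ as before. The microlocal part is estimated in $L_2(\A)$ through $\|\sum_k\Gamma^\nu_{k+s}*b_k\|_2^2$.

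The main obstacle is this $L_2$ step, because $b$ is not a sum of positive pieces with $\|b_k\|_2^2\lesssim\lambda\|b_k\|_1$. The off-diagonal terms $p_i(f-f_k)p_j$ carry no size control. The plan is to expand the square using the trace and operator-valued Plancherel. The diagonal-in-$k$ terms are handled as in Seeger, using the pointwise bound $\|\Gamma^\nu*b_k\|$ by averages of $|b_k|$ over tubes together with the bound $\lambda$ on the averages $f_{k}$ localized by $q_k$. The cross terms, with different $k,k'$ and the same $s$, form the off-diagonal interaction mentioned in the abstract. For these we would exploit that $\Gamma^\nu_{k+s}$ and $\Gamma^{\nu'}_{k'+s}$ have nearly disjoint Fourier supports unless $|k-k'|$ is small and $\nu,\nu'$ are close. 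This should give decay $2^{-\epsilon|k-k'|}$, so that the sum over $k'$ is bounded. The off-diagonal $\boff$ pieces would be treated via the factorization $p_i(\cdot)p_j$ together with Cauchy--Schwarz in the trace, which reduces them to the diagonal estimates with constants $\lambda\|f\|_1$. Summing, we expect the bound $\varphi(|\zeta T_\Omega b\zeta|>\lambda)\lesssim2^{-\epsilon s}C_{\Omega,1}\|f\|_1/\lambda$ for each $s$, and summing over $s$ gives \eqref{eq-main-rough}. The extension from $\core$ to $L_1(\A)$ then follows by density.
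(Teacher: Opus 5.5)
\textbf{Verdict: there is a genuine gap.} The good part and the $L_1$ treatment of the pieces acting on $\bd$ are fine. The argument breaks down in two places: the handling of $\boff$, and the mechanism for the cross terms in the $L_2$ step.

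\textbf{The off-diagonal part.} First, $\boff=\sum_k(p_kfq_k+q_kfp_k)$ is not the family $p_i(f-f_{i\vee j})p_j$, $i\ne j$. In the finite model $q_k=q+\sum_{j>k}p_j$, so $\boff$ also contains good-type pieces such as $p_kf_jp_j$ $(j>k)$ and $p_kfq$, which your factorization does not address.

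More seriously, your $L_1$ bound for the level sets with $2^m>2^{\epsilon s}$ needs $\sum_k\|b_k\|_1\lesssim\|f\|_1$. This holds for $\bd$ but not for the off-diagonal pieces. There one only has $\|p_kfq_k\|_1\le\varphi(fp_k)^{1/2}\|f\|_1^{1/2}$, whose sum over $k$ is not controlled by $\|f\|_1$.

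The paper instead resolves $b_{\mathrm{off},k}=\sum_{\ell\ge1}(b_{k,\ell}+g_{k,\ell})$ (Lemma~\ref{lem:lag-blocks}). The block $b_{k,\ell}$ cancels at the finer scale $k+\ell$, so the relevant microlocal parameter is $s=n+\ell$, not the coarse scale. Moreover, $\sum_k\|b_{k,\ell}\|_1\lesssim\|f\|_1$ holds only for each fixed $\ell$. Estimating each pair $(n,\ell)$ separately against $\|\Omega^{\hi}_{n+\ell}\|_1$ therefore loses a logarithm, i.e.\ it would require $\Omega\in L(\log^+L)^2$.

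The missing idea is the cumulative estimate of Lemma~\ref{lem:cumulative}, which keeps $b_{k,\ell}$ and $g_{k,\ell}$ together. With $P=\sum_{k=n_0-j}^{h-j}p_k$, the partial sum over $n+\ell\le h$ telescopes to $PfP-\sum_kp_kfp_k+Pf_{h-j}q_{h-j}+q_{h-j}f_{h-j}P$, whose $L_1$ norm is at most $4\varphi(fP)$. Applied for each $\theta$ with $h(\theta)\approx16\log_2(|\Omega(\theta)|/\|\Omega\|_1)$, this grows only linearly in $h(\theta)$ and produces exactly $C_{\Omega,1}$.

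\textbf{The cross terms.} Your proposed mechanism fails already for $\bd$. The multipliers $P_\nu^s$ localize only the direction of $\xi$, and they depend on $(s,\nu)$ but not on $j$. The sector kernels $H_{j\nu}^s$ are not localized in $|\xi|$ either. So for fixed $s,\nu$ there is no Fourier separation between different $j$ (equivalently different $k$), and no $2^{-\epsilon|k-k'|}$ decay. Indeed, the interaction kernel $\widetilde H_{j\nu}^{s,\Psi}*H_{i\nu}^{s,\Psi}$ is supported in a rectangle $R_{j\nu}^s$ with height $\|\Psi\|_\infty^22^{-jd}2^{-s(d-1)}$ for every $i\le j$, with no decay in $j-i$.

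What the paper does instead is a $TT^*$ argument, with one factor in $L_\infty$ and the other in $L_1$. All $i\le j$ are summed at once via the packing bound $\sum_{Q\subset E}|Q|\pi_Q\le|E|\one_\M$, which comes from the orthogonality of the $p_k$ (Lemmas~\ref{lem:projection-packing} and~\ref{lem:sector-packing}). Since $|b|$ is unavailable, $b_{\mathrm d,k}$ is split into the positive pieces $p_kfp_k$ and $p_kf_kp_k$.

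For $b_{k,\ell}$ this is supplemented by three further ingredients:
\begin{enumerate}
\item column H\"older with the $L_\infty$ factor placed on the fine projection $p_{s-j}$;
\item a counting bound for the near interactions $1\le j-i\le\ell$, costing a factor $1+\ell$ that is absorbed by $2^{-s/2}$ because $s=n+\ell$;
\item Cauchy--Schwarz in the trace plus packing at $r=n$ and $r=s$ for the far interactions.
\end{enumerate}
The $g_{k,\ell}$ blocks are martingale differences with $\sum_k\|g_{k,\ell}\|_2^2\lesssim\alpha\|f\|_1$. That is where a Fourier/Littlewood--Paley argument genuinely works (Lemma~\ref{lem:martingale}).

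Finally, distribution functions at a fixed level $\lambda$ cannot simply be summed over $s$. You must sum the $L_2$ and $L_1$ norms first and apply Chebyshev once.
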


\begin{remark}\label{rem:nonhomogeneous}
	For $\M=\C$, Theorem~\ref{thm:main} recovers
	the homogeneous case of Seeger's endpoint theorem
	\cite{Seeger}. We also extend the result to the
	nonhomogeneous convolution kernels considered
	there; see Section~\ref{sec:nonhomogeneous}.
\end{remark}

	The proof combines two forms of the noncommutative
	Calder\'on--Zygmund decomposition with Seeger's microlocal
	decomposition. We begin with the regular decomposition of Cadilhac,
	Conde-Alonso and Parcet, whose bounded good part can be handled
	directly by the $L_2$ boundedness of $T_\Omega$. The remaining
	off-diagonal bad part is further resolved into the blocks
	$b_{k,\ell}$ and $g_{k,\ell}$ from Parcet's earlier decomposition;
	see Subsection~\ref{subsec-b-g}. These two families have different
	cancellation and norm properties. We now outline how these two families are used in the estimate of
	the off-diagonal bad part.
	
	We first decompose the kernel $\Omega$ into two parts; see
	Subsection~\ref{subsec-kernel-decom}. For the terms involving the
	first part of $\Omega$, the $b_{k,\ell}$ and $g_{k,\ell}$ blocks are
	treated separately. For the $g_{k,\ell}$ blocks, we use their cancellation at the
	corresponding dyadic scale together with the Fourier decay of the
	kernel; see Lemma~\ref{lem:martingale}. For the $b_{k,\ell}$ blocks, the main
	difficulty is that the sum over the kernel scales has to be estimated
	as a whole. In particular, we need to control the interactions
	between different kernel scales. Nearby scales are handled by a
	simple counting argument, while more widely separated scales are
	treated by a $TT^*$-type argument combining geometric kernel
	estimates with the structure of the Cuculescu projections; see
	Lemma~\ref{lem:offprincipal}.
	
	For the terms involving the second part of $\Omega$, in contrast, it
	is important to keep the $b_{k,\ell}$ and $g_{k,\ell}$ blocks
	together. For each fixed $\theta\in S^{d-1}$, only finitely many
	scale levels can occur, and their number is controlled by a
	logarithmic term involving $\Omega$. If the contributions corresponding to each pair of scales 
	were estimated separately and then summed, an additional logarithmic
	factor would appear. To avoid this loss, we keep the two families
	together and estimate their sum directly in $L_1$; see
	Lemma~\ref{lem:cumulative}. The resulting estimate grows only
	linearly with the number of relevant scale levels. After integration
	in $\theta$, this gives exactly the $L(\log^+L)$ quantity appearing in
	$C_{\Omega,1}$.

					Shortly before the completion of this manuscript, Lai \cite{Lai2026}
					posted a related preprint proving the weak type $(1,1)$ boundedness
					of the same noncommutative rough singular integral under the stronger
					assumption
					$
					\Omega\in L(\log^+L)^{5/2}(S^{d-1}).
					$
					Lai's proof is based on Parcet's original noncommutative
					Calder\'on--Zygmund decomposition, whereas our argument combines the
					noncommutative Calder\'on--Zygmund decomposition of
					Cadilhac--Conde-Alonso--Parcet with the off-diagonal structure of
					Parcet's original decomposition.
										We note that the proof of Lemma~\ref{lem:offprincipal} is closely
					related to Lai's proof of Proposition~3.8 in \cite{Lai2026}. Both
					arguments combine a microlocal decomposition with a $TT^*$ method,
					while the main differences lie in the treatment of the single-scale
					terms and the far interactions. A further difference arises in the
					treatment of the second part of $\Omega$: we keep the two
					off-diagonal families $b_{k,\ell}$ and $g_{k,\ell}$ together and
					apply the cumulative $L_1$ estimate in Lemma~\ref{lem:cumulative}.
					This avoids an additional logarithmic loss in the summation
					and is a key step in proving the weak type $(1,1)$ estimate
					under the classical assumption
					$\Omega\in L(\log^+L)(S^{d-1})$.

		The paper is organized as follows.
	Section~\ref{sec-pre} introduces the notation, kernel decompositions,
	and noncommutative Calder\'on--Zygmund decompositions used throughout
	the paper, including the proofs of some necessary lemmas.
	Section~\ref{sec:plan} proves Theorem~\ref{thm:main} assuming four
	auxiliary lemmas.
	Section~\ref{sec:diagproof} proves the first of the four lemmas,
	which concerns the diagonal bad function.
	Section~\ref{sec:offproof} proves the remaining three lemmas,
	which concern the off-diagonal bad function.
	Section~\ref{sec:nonhomogeneous} extends Theorem~\ref{thm:main} to nonhomogeneous convolution kernels.

		Throughout the paper, constants denoted by $C$ or $C_d$ are positive
	and may change from line to line. We write
	$A\lesssim B$ if $A\leq CB$, and $A\lesssim_d B$ if the constant
	depends only on the dimension $d$.

	\section{Preliminaries  }\label{sec-pre}

\subsection{Noncommutative integration}

Throughout the paper, let $(\M,\tau)$ be a semifinite von Neumann algebra equipped with a
normal semifinite faithful trace. Set
\[
    \A=L_\infty(\R^d)\,\overline\otimes\,\M,
    \qquad
    \varphi=\int_{\R^d}\dd x\otimes\tau.
\]
For $1\leq p<\infty$, we use the standard isometric identification
\[
    L_p(\A)=L_p(\R^d;L_p(\M))
\]
and write
\[
    \norm[L_p(\A)]{h}^p
    =\varphi(|h|^p)
    =\int_{\R^d}\norm[L_p(\M)]{h(x)}^p\dd x.
\]
We set $L_\infty(\A)=\A$ and write $\norm[p]{\cdot}$ whenever the
ambient algebra is clear.
The space $L_2(\A)$ is a Hilbert space with inner product
\[
\langle x,y\rangle_2=\varphi(y^*x).
\]
For a $\varphi$-measurable operator $h$ and $t>0$, define
\[
    d_t(h)=\varphi\bigl(\one_{(t,\infty)}(|h|)\bigr),
    \qquad
    \norm[1,\infty]{h}=\sup_{t>0}t\,d_t(h).
\]

We record the standard estimates used below.

\begin{lemma}\label{lem:basic-integration}
    The following assertions hold.
    \begin{enumerate}[label=\textup{(\roman*)},leftmargin=2.2em]
        \item For $s,t>0$,
        \[
            d_{s+t}(x+y)\leq d_s(x)+d_t(y).
        \]

        \item If $h\in L_p(\A)$ and $t>0$, then
        \[
            d_t(h)\leq t^{-p}\norm[p]{h}^p.
        \]

        \item If $K\in L_1(\R^d)$ is scalar and $h\in L_p(\A)$,
        $1\leq p<\infty$, then
        \begin{equation}\label{eq-Young}
            \norm[p]{K*h}
            \leq
            \norm[L_1(\R^d)]{K}\norm[p]{h}.
        \end{equation}

        \item With
        $
            \widehat h(\xi)
            =\int_{\R^d}e^{-i\langle x,\xi\rangle}h(x)\dd x,
        $
          Plancherel's theorem gives
        \[
            \norm[2]{h}^2
            =(2\pi)^{-d}
            \int_{\R^d}
            \norm[L_2(\M)]{\widehat h(\xi)}^2\dd\xi.
        \]
    \end{enumerate}
\end{lemma}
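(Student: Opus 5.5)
The four assertions are standard facts about noncommutative integration, and I would prove them in order, each with its usual tool.

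For (i), the natural route is the projection characterization of the distribution function: for a $\varphi$-measurable $h$ and $t>0$,
\[
d_t(h)=\inf\{\varphi(\one-q): q \text{ projection},\ \norm[\infty]{hq}\leq t\}.
\]
I would take projections $p,q$ that are nearly optimal for $x$ at level $s$ and for $y$ at level $t$, and put $e=p\wedge q$. Then
\[
\norm[\infty]{(x+y)e}\leq s+t.
\]
By Kaplansky's formula, $\varphi(\one-e)\leq\varphi(\one-p)+\varphi(\one-q)$. Taking the infimum gives (i). The delicate point is the infimum characterization itself. If I do not want to cite it (for instance from Fack--Kosaki), I would prove it via the spectral projection $\one_{[0,t]}(|h|)$ for one inequality. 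For the other inequality I would use the comparison $\one_{(t,\infty)}(|h|)\wedge q=0$ whenever $\norm[\infty]{hq}\leq t$. That comparison yields $\varphi(\one_{(t,\infty)}(|h|))\leq\varphi(\one-q)$ through the parallelogram law for projections.

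For (ii), Chebyshev's inequality suffices. Since $t^p\one_{(t,\infty)}(|h|)\leq|h|^p$ by functional calculus, applying $\varphi$ gives the bound.

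For (iii), I would use the identification $L_p(\A)=L_p(\R^d;L_p(\M))$. Then $K*h(x)=\int K(y)h(x-y)\dd y$ is a Bochner integral in $L_p(\M)$. Minkowski's integral inequality gives
\[
\norm[p]{K*h}\leq\int|K(y)|\,\norm[p]{h(\cdot-y)}\dd y=\norm[L_1(\R^d)]{K}\norm[p]{h}.
\]
Translation invariance of Lebesgue measure is what makes the last equality hold.

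For (iv), $L_2(\A)=L_2(\R^d;L_2(\M))$ is a Bochner space with values in the Hilbert space $L_2(\M)$, where Plancherel holds. I would verify it first on finite sums $\sum_j f_j\otimes m_j$ with $f_j\in L_1\cap L_2(\R^d)$. Expanding $\norm[L_2(\M)]{\widehat h(\xi)}^2=\sum_{j,k}\widehat f_j(\xi)\overline{\widehat f_k(\xi)}\tau(m_k^*m_j)$ reduces the identity to the scalar polarized Parseval identity. The general case then follows by density and extension of the Fourier transform.

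The main obstacle is (i), specifically justifying the projection characterization of $d_t$ in the semifinite setting. (ii)–(iv) are routine.
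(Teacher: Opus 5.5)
Your proposal is correct. The paper gives no proof and records these four facts as standard, and your arguments are the standard justifications behind them: the projection characterization of $d_t$ with Kaplansky's parallelogram law for (i), Chebyshev for (ii), Minkowski's integral inequality in the Bochner space $L_p(\R^d;L_p(\M))$ for (iii), and $L_2(\M)$-valued Plancherel by density for (iv). One small simplification in (i): the infimum is attained by the spectral projections $\one_{[0,s]}(|x|)$ and $\one_{[0,t]}(|y|)$, so you can take $e$ to be their meet directly instead of nearly optimal projections.
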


Let $(\Sigma,\mu)$ be a measure space. We shall also use the column
H\"older inequality \cite[Proposition~1.1]{Mei}.

\begin{lemma}\label{lem:column-holder}
      Let $1\leq p,q,r\leq\infty$ satisfy
        $1/r=1/p+1/q$. For
        $a\in L_p(\M;L_2^c(\Sigma))$ and
        $b\in L_q(\M;L_2^c(\Sigma))$,
        \[
            \left\|\int_\Sigma a^*(\omega)b(\omega)\dd\mu(\omega)\right\|_r
            \leq
            \left\|\left(\int_\Sigma |a(\omega)|^2\dd\mu(\omega)\right)^{1/2}\right\|_p
            \left\|\left(\int_\Sigma |b(\omega)|^2\dd\mu(\omega)\right)^{1/2}\right\|_q.
        \]

  \end{lemma}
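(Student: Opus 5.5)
Since Lemma~\ref{lem:column-holder} is quoted from \cite[Proposition~1.1]{Mei}, the plan is to reduce it to the ordinary noncommutative H\"older inequality, realizing the column square functions as genuine operators in a larger algebra. I would work in $\mathcal N=\M\,\overline\otimes\,B(L_2(\Sigma))$ with trace $\tau\otimes\mathrm{tr}$. Fix a unit vector $e\in L_2(\Sigma)$. To a (say finitely supported, simple) function $a:\Sigma\to L_p(\M)$ I associate the column operator
\[
C_a=\int_\Sigma a(\omega)\otimes |\omega\rangle\langle e|\dd\mu(\omega),
\]
understood via an orthonormal basis adapted to a partition of $\Sigma$ into finitely many sets of finite measure. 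One checks that
\[
C_a^*C_b=\Bigl(\int_\Sigma a^*(\omega)b(\omega)\dd\mu(\omega)\Bigr)\otimes|e\rangle\langle e|
\]
and that $|C_a|=\bigl(\int_\Sigma|a|^2\dd\mu\bigr)^{1/2}\otimes|e\rangle\langle e|$. This is exactly the identification defining $L_p(\M;L_2^c(\Sigma))$ as a subspace of $L_p(\mathcal N)$.

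Next I apply the noncommutative H\"older inequality in $L_r(\mathcal N)$:
\[
\|C_a^*C_b\|_r\le\|C_a^*\|_p\|C_b\|_q=\|C_a\|_p\|C_b\|_q.
\]
Because $|e\rangle\langle e|$ is a rank-one projection, the norms on both sides reduce to the corresponding $L_r(\M)$, $L_p(\M)$ and $L_q(\M)$ norms. The left side becomes the left side of the lemma, and $\|C_a\|_p=\|\,|C_a|\,\|_p$ gives the column square function norm on the right.

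Finally, I pass from simple $a,b$ to general elements by density. Simple functions are dense in $L_p(\M;L_2^c(\Sigma))$ for $p<\infty$, and the bilinear map is continuous by the inequality just proved. For $p=\infty$ or $q=\infty$ I would argue using weak$^*$ density together with lower semicontinuity of the norm.

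The main obstacle is technical rather than conceptual: making the column embedding rigorous for general measurable $\Sigma$, including measurability and the meaning of the integral when $r<1$ is excluded but $r$ could equal $1/2$ only if $p$ or $q$ is less than $2$. Here the hypothesis $p,q,r\ge1$ keeps everything within Banach spaces, so Bochner-type approximation by simple functions suffices.
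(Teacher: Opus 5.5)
The paper does not prove this lemma; it simply quotes Mei's Proposition~1.1. Your argument is correct and is the standard one behind that result: you realize $a,b$ as column operators $C_a,C_b$ in $L_p(\M\,\overline\otimes\,B(L_2(\Sigma)))$, so that $C_a^*C_b=\bigl(\int_\Sigma a^*b\dd\mu\bigr)\otimes|e\rangle\langle e|$ and $|C_a|=\bigl(\int_\Sigma|a|^2\dd\mu\bigr)^{1/2}\otimes|e\rangle\langle e|$, and then apply the noncommutative H\"older inequality. Your density step, including the weak$^*$ argument at $p=\infty$, can be dropped: with the usual definition of $L_p(\M;L_2^c(\Sigma))$ as a closed (weak$^*$-closed if $p=\infty$) subspace of $L_p(\M\,\overline\otimes\,B(L_2(\Sigma)))$, the quantities $\int a^*b$ and $(\int|a|^2)^{1/2}$ are defined through $C_a^*C_b$ and $|C_a|$, so H\"older applies directly to general elements; your closing worry about $r=1/2$ is also moot, since $r\ge1$ is assumed and H\"older holds for every $r>0$.
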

         In particular, for $(p,q,r)=(\infty,2,2)$,
        \begin{equation}\label{eq:column22}
            \left\|\int_\Sigma a^*b\dd\mu\right\|_{L_2(\M)}^2
            \leq
            \left\|\int_\Sigma a^*a\dd\mu\right\|_{L_\infty(\M)}
            \tau\left(\int_\Sigma b^*b\dd\mu\right).
        \end{equation}

\subsection{The dyadic martingales}

For $k\in\Z$, let $\Q_k$ be the family of standard dyadic cubes in
$\R^d$ of side length $2^{-k}$, and let $\Sigma_k$ be the
$\sigma$-algebra generated by $\Q_k$. Set
\[
    \A_k=L_\infty(\R^d,\Sigma_k)\,\overline\otimes\,\M.
\]
Then $(\A_k)_{k\in\Z}$ is an increasing filtration of $\A$. Let
\[
    \E_k:\A\longrightarrow\A_k
\]
be the trace-preserving conditional expectation, extended
contractively to $L_p(\A)$ for $1\leq p<\infty$. For a locally
integrable operator-valued function $f$,
\[
    \E_kf=\sum_{Q\in\Q_k}f_Q\one_Q,
    \qquad
    f_Q=\frac1{|Q|}\int_Qf(y)\dd y.
\]
We write
\[
    f_k=\E_kf,
    \qquad
    df_k=f_k-f_{k-1}.
\]
The conditional expectations satisfy the following standard properties.
\begin{enumerate}[label=\textup{(\roman*)},leftmargin=2.2em]
    \item For $k,l\in\Z$,
    \[
        \E_k\E_l=\E_{\min\{k,l\}}.
    \]

   \item If $a,b\in\A_k$ and $h\in L_p(\A)$, $1\leq p<\infty$, then
   \[
   \E_k(ahb)=a\,\E_k(h)\,b.
   \]
\end{enumerate}

\subsection{Dyadic kernels and Seeger's microlocal decomposition}

Choose a fixed real nonnegative $\beta\in C_c^\infty((1/2,2))$ such that
\[
    \sum_{j\in\Z}\beta(2^{-j}r)^2=1,
    \qquad r>0,
\]
as in \cite[Section~3]{Seeger}. For $\Psi\in L_1(S^{d-1})$, we use
the same symbol $\Psi$ for its homogeneous extension of degree zero to
$\R^d\setminus\{0\}$. For $j\in\Z$, define
\begin{equation}\label{eq:dyadickernels}
    K_j^\Psi(x)
    =\beta(2^{-j}|x|)^2\frac{\Psi(x/|x|)}{|x|^d},
    \qquad
    T_j^\Psi h=K_j^\Psi*h.
\end{equation}
Throughout the paper, $I$ denotes a finite subset of $\Z$, and we set
\[
    T_{\Omega,I}f=\sum_{j\in I}T_j^\Omega f.
\]
Polar coordinates give
\begin{equation}\label{eq:kernelL1}
    \begin{aligned}
        \norm[L_1(\R^d)]{K_j^\Psi}
        &=\int_{S^{d-1}}|\Psi(\theta)|\dd\sigma(\theta)
          \int_0^\infty\beta(2^{-j}r)^2\frac{\dd r}{r}\\
        &=\norm[1]{\Psi}\int_0^\infty\beta(r)^2\frac{\dd r}{r}
        \lesssim_d\norm[1]{\Psi}.
    \end{aligned}
\end{equation}

We first record the Fourier estimates for the dyadic kernels
\eqref{eq:dyadickernels}.

\begin{lemma}\label{lem:fourier}
	The following assertions hold.
	\begin{enumerate}[label=\textup{(\roman*)},leftmargin=2.2em]
		\item For $\Psi\in L_\infty(S^{d-1})$, $j\in\Z$, and
		$\xi\neq0$,
		\begin{equation}\label{eq:boundedmultiplier}
			|\widehat{K_j^\Psi}(\xi)|
			\lesssim_d
			\|\Psi\|_\infty
			\min\{1,(2^j|\xi|)^{-1}\}.
		\end{equation}
		
		\item If $\Omega$ satisfies \eqref{eq:meanzero} and
		$C_{\Omega,1}<\infty$, then
		\begin{equation}\label{eq:absolutemultiplier}
			\sup_{\xi\neq0}
			\sum_{j\in\Z}
			|\widehat{K_j^\Omega}(\xi)|
			\lesssim_d C_{\Omega,1}.
		\end{equation}
		
		\item Consequently,
		\[
		\sup_I
		\|T_{\Omega,I}\|_{L_2(\A)\to L_2(\A)}
		\lesssim_d C_{\Omega,1}.
		\]
	 	\end{enumerate}
\end{lemma}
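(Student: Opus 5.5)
We prove the three parts in order. Part (iii) follows from (ii) by Plancherel, and part (ii) is Seeger's (classical) estimate, which we will reprove by decomposing $\Omega$ according to level sets.

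\emph{Part (i).} Write $\widehat{K_j^\Psi}(\xi)=\int_{S^{d-1}}\Psi(\theta)\,m(2^j\langle\theta,\xi\rangle)\dd\sigma(\theta)$ with $m(s)=\int_0^\infty\beta(r)^2e^{-irs}\frac{\dd r}{r}$, using polar coordinates and the substitution $r\mapsto 2^jr$. Trivially $|m|\lesssim1$. Integrating by parts once in $r$ gives $|m(s)|\lesssim|s|^{-1}$, so
\[
|\widehat{K_j^\Psi}(\xi)|\lesssim\|\Psi\|_\infty\int_{S^{d-1}}\min\{1,(2^j|\xi||\langle\theta,\xi'\rangle|)^{-1}\}\dd\sigma(\theta),
\]
where $\xi'=\xi/|\xi|$. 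Since $d\ge2$, the measure of the band $\{|\langle\theta,\xi'\rangle|<t\}$ is $\lesssim t$. Splitting the integral at $|\langle\theta,\xi'\rangle|\sim(2^j|\xi|)^{-1}$ therefore gives $\lesssim(2^j|\xi|)^{-1}\log(2+2^j|\xi|)$. The logarithm can be removed: in $d=2$ the density $(1-t^2)^{-1/2}$ makes the log harmless only on the wrong side, so instead integrate by parts twice in $r$ to get $|m(s)|\lesssim\min\{1,s^{-2}\}$. This yields $\lesssim(2^j|\xi|)^{-1}$ with no logarithm.

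\emph{Part (ii).} Normalize $\|\Omega\|_1=1$ by homogeneity. Set $E_0=\{|\Omega|\le2\}$ and $E_k=\{2^k<|\Omega|\le2^{k+1}\}$ for $k\ge1$. Decompose $\Omega=\sum_{k\ge0}\Omega_k$ with
\[
\Omega_k=\Omega\one_{E_k}-\frac{1}{\sigma(S^{d-1})}\int_{S^{d-1}}\Omega\one_{E_k}\dd\sigma,
\]
so that each $\Omega_k$ has mean zero (this uses \eqref{eq:meanzero}). We have $\|\Omega_k\|_\infty\lesssim2^k$ and $\|\Omega_k\|_1\lesssim\|\Omega\one_{E_k}\|_1$. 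Fix $\xi$ and the piece $\Omega_k$, and split the sum over $j$ into three ranges.
\begin{itemize}
\item For $2^j|\xi|\le1$: use cancellation, $|\widehat{K_j^{\Omega_k}}(\xi)|\lesssim\|\Omega_k\|_1\,2^j|\xi|$, since $m(s)-m(0)=O(|s|)$. The geometric sum is $\lesssim\|\Omega_k\|_1$.
\item For $2^j|\xi|\ge2^{k}$ (or more precisely $\ge 2^{Ck}$): use part (i), giving $\lesssim2^k(2^j|\xi|)^{-1}$. Summing gives $\lesssim 2^{k}2^{-k}\cdot\sigma(E_k)$.
\end{itemize}
The second bullet needs more care to produce $\|\Omega_k\|_1$ rather than just $O(1)$. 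For this we interpolate the decay: $|\widehat{K_j^{\Omega_k}}(\xi)|\lesssim\|\Omega_k\|_q(2^j|\xi|)^{-\epsilon}$ by Hölder with $q$ near $1$. Alternatively, and more simply, we bound the intermediate range $1<2^j|\xi|<2^{k}$ (about $k$ terms) trivially by $\|\Omega_k\|_1$ each, giving $k\|\Omega_k\|_1$. For the far range $2^j|\xi|\ge2^k$ we use the $L_2$ estimate $|\widehat{K_j^{\Omega_k}}(\xi)|\lesssim\|\Omega_k\|_2(2^j|\xi|)^{-1/2}$, obtained by Cauchy--Schwarz against $\min\{1,(2^j|\xi||\langle\theta,\xi'\rangle|)^{-2}\}$, whose integral is $\lesssim(2^j|\xi|)^{-1}$. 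Since $\|\Omega_k\|_2\lesssim2^{k}\sigma(E_k)^{1/2}$, this gives $\lesssim 2^{k}\sigma(E_k)^{1/2}2^{-k/2}$. This is still not obviously $\lesssim\|\Omega\one_{E_k}\|_1\approx2^k\sigma(E_k)$.

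So the threshold should instead be chosen depending on $\sigma(E_k)$: take the cutoff at $2^j|\xi|\ge\sigma(E_k)^{-1}$. Then the far range is $\lesssim2^k\sigma(E_k)$, and the intermediate range has about $\log(1/\sigma(E_k))$ terms, each bounded by $2^k\sigma(E_k)$. Chebyshev gives $\sigma(E_k)\le2^{-k}$, so $\log(1/\sigma(E_k))\ge k$, and the count needs to be $\lesssim 1+k$; this fails when $\sigma(E_k)$ is tiny. That is the main obstacle. The resolution is to use $\Omega_k$ with fixed mean zero and to bound the middle range by $\sum_{\theta}$ pointwise, i.e.
\[
\sum_j|m(2^j\langle\theta,\xi\rangle)-m(0)\cdots|,
\]
which does not help since there is no absolute value inside the integral. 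I therefore expect to follow Seeger's argument: the middle range count is $\lesssim 1+\log^+(|\Omega|)$ pointwise, obtained by splitting the region where $|\langle\theta,\xi'\rangle|\ge 2^{-k}$ (using decay) from the thin band of measure $2^{-k}$ (where $\|\Omega_k\|_\infty2^{-k}\lesssim1\cdot$ per scale, but only on the band). Summing over $k$ then gives $\sum_k(1+k)\|\Omega\one_{E_k}\|_1\lesssim C_{\Omega,1}$.

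\emph{Part (iii).} By Lemma~\ref{lem:basic-integration}(iv), the operator $T_{\Omega,I}$ acts as the scalar Fourier multiplier $\sum_{j\in I}\widehat{K_j^\Omega}$. This multiplier is bounded by \eqref{eq:absolutemultiplier} uniformly in $I$, which gives the stated uniform bound on $\|T_{\Omega,I}\|_{L_2(\A)\to L_2(\A)}$.
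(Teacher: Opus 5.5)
\textbf{Parts (i) and (iii).} These are fine and agree with the paper. The paper also integrates by parts twice ($N=2$) to obtain $\int_{S^{d-1}}(1+a|\langle\theta,\xi'\rangle|)^{-2}\,d\sigma\lesssim a^{-1}$, and deduces (iii) from Plancherel. Your remark about the density in $d=2$ is beside the point. All that is used is that the density of $\langle\theta,\xi'\rangle$ is bounded near $0$, which holds in every dimension $d\ge2$.

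\textbf{Part (ii) is not proved.} You end by naming an obstacle and gesturing at Seeger. The final ``thin band'' sketch is too vague to check, and as written it does not close: bounding each middle scale by $\|\Omega_k\|_\infty 2^{-k}\lesssim1$ gives $O(k)$ per level, which is not summable in $k$. There is also an arithmetic slip: summing $2^k(2^j|\xi|)^{-1}$ over $2^j|\xi|\ge2^k$ gives $O(1)$, not $2^k2^{-k}\sigma(E_k)$.

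The obstacle is self-imposed. You insisted that the far range be bounded by $\|\Omega_k\|_1$, but it only needs to be summable in $k$. With your pieces $\Omega_k$ and $\|\Omega\|_1=1$, put the cutoff at $2^j|\xi|=2^{2k}$:
\begin{itemize}
\item the low range $2^j|\xi|\le1$ gives $\lesssim\|\Omega_k\|_1$;
\item the middle range $1<2^j|\xi|\le2^{2k}$ contains at most $2k$ indices, each bounded by $\|K_j^{\Omega_k}\|_{L_1}\lesssim\|\Omega_k\|_1$;
\item the far range gives, by (i), $\lesssim 2^k\sum_{2^j|\xi|>2^{2k}}(2^j|\xi|)^{-1}\lesssim2^{-k}$.
\end{itemize}
Since $|\Omega|>2^k$ on $E_k$ for $k\ge1$, summing in $k$ yields
\[
\sum_{j}|\widehat{K_j^\Omega}(\xi)|\lesssim\sum_k(1+k)\|\Omega\one_{E_k}\|_1+\sum_k2^{-k}\lesssim C_{\Omega,1}.
\]

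\textbf{The paper uses the same quadratic relation.} The key is the quadratic relation between the height of $\Omega$ and $2^j|\xi|$. The paper splits $\Omega$ at the $j$-dependent height $\|\Omega\|_1(2^j|\xi|)^{1/2}$. It applies (i) to the bounded part, which gives the summable factor $(2^j|\xi|)^{-1/2}$. It applies the trivial $L_1$ bound to the large part, whose sum over $j$ is
\[
\int|\Omega|\,\#\{j:1<2^j|\xi|<|\Omega|^2/\|\Omega\|_1^2\}\,d\sigma\lesssim C_{\Omega,1}.
\]
Cancellation of $\Omega$ itself is needed only in the range $2^j|\xi|\le1$, so the paper does not need mean-zero level-set pieces at all.
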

The proof is standard; we nevertheless provide it for the reader's convenience.
\begin{proof}
	Set
	\[
	\Phi(t)
	=
	\int_0^\infty
	\frac{\beta(r)^2}{r}e^{-irt}\dd r.
	\]
	Since $\beta(r)^2/r$ is smooth and compactly supported in
	$(1/2,2)$, integration by parts gives, for every $N\geq0$,
	\[
	|\Phi(t)|
	\leq C_N(1+|t|)^{-N},
	\qquad
	|\Phi(t)-\Phi(0)|
	\leq C|t|.
	\]
	By polar coordinates,
	\begin{equation}\label{eq:FourierPhi}
		\widehat{K_j^\Psi}(\xi)
		=
		\int_{S^{d-1}}
		\Psi(\theta)
		\Phi\bigl(2^j\langle\theta,\xi\rangle\bigr)
		\dd\sigma(\theta).
	\end{equation}
	
	We first prove \eqref{eq:boundedmultiplier}.
	If $2^j|\xi|\leq1$, then \eqref{eq:FourierPhi} immediately gives
	\[
	|\widehat{K_j^\Psi}(\xi)|
	\lesssim_d\|\Psi\|_\infty.
	\]
	If $2^j|\xi|>1$, take $N=2$. Since
	\[
	\int_{S^{d-1}}
	\bigl(
	1+a|\langle\theta,\xi/|\xi|\rangle|
	\bigr)^{-2}
	\dd\sigma(\theta)
	\lesssim_d a^{-1},
	\qquad a\geq1,
	\]
	we obtain
	\[
	|\widehat{K_j^\Psi}(\xi)|
	\lesssim_d
	\|\Psi\|_\infty(2^j|\xi|)^{-1}.
	\]
	This proves \eqref{eq:boundedmultiplier}.
	
	We next prove \eqref{eq:absolutemultiplier}.
	The case $\Omega=0$ is immediate, so assume
	$\|\Omega\|_1>0$. By \eqref{eq:meanzero},
	\[
	\widehat{K_j^\Omega}(\xi)
	=
	\int_{S^{d-1}}
	\Omega(\theta)
	\left[
	\Phi\bigl(2^j\langle\theta,\xi\rangle\bigr)
	-\Phi(0)
	\right]
	\dd\sigma(\theta).
	\]
	Hence, when $2^j|\xi|\leq1$,
	\begin{equation}\label{eq:Fourier-low}
		|\widehat K_j^\Omega(\xi)|
		\lesssim
		\|\Omega\|_1\,2^j|\xi|.
	\end{equation}
	Therefore
	\[
	\sum_{j:\,2^j|\xi|\leq1}
	|\widehat K_j^\Omega(\xi)|
	\lesssim
	\|\Omega\|_1.
	\]
	
	Suppose now that $2^j|\xi|>1$. Split
	\[
	\Omega
	=
	\Omega\one_{\{
		|\Omega|\leq
		\|\Omega\|_1(2^j|\xi|)^{1/2}\}}
	+
	\Omega\one_{\{
		|\Omega|>
		\|\Omega\|_1(2^j|\xi|)^{1/2}\}}.
	\]
	Applying \eqref{eq:boundedmultiplier} to the first part and
	\eqref{eq:kernelL1} to the second gives
	\begin{equation}\label{eq:Fourier-high}
		|\widehat K_j^\Omega(\xi)|
		\lesssim_d
		\|\Omega\|_1(2^j|\xi|)^{-1/2}
		+
		\int_{\{
			|\Omega|>
			\|\Omega\|_1(2^j|\xi|)^{1/2}\}}
		|\Omega|\dd\sigma.
	\end{equation}
	The first term on the right is summable:
	\[
	\sum_{j:\,2^j|\xi|>1}
	\|\Omega\|_1(2^j|\xi|)^{-1/2}
	\lesssim
	\|\Omega\|_1.
	\]
	For the second term, changing the order of summation and integration,
	\[
	\begin{aligned}
		&\sum_{j:\,2^j|\xi|>1}
		\int_{\{
			|\Omega|>
			\|\Omega\|_1(2^j|\xi|)^{1/2}\}}
		|\Omega|\dd\sigma\\
		&\quad=
		\int_{S^{d-1}}
		|\Omega(\theta)|
		\#\left\{
		j:
		1<2^j|\xi|
		<
		\frac{|\Omega(\theta)|^2}{\|\Omega\|_1^2}
		\right\}
		\dd\sigma(\theta)\\
		&\quad\lesssim
		\int_{S^{d-1}}
		|\Omega(\theta)|
		\left(
		1+\log^+
		\frac{|\Omega(\theta)|}{\|\Omega\|_1}
		\right)
		\dd\sigma(\theta)\\
		&\quad\lesssim
		C_{\Omega,1}.
	\end{aligned}
	\]
	Combining this with \eqref{eq:Fourier-low} and
	\eqref{eq:Fourier-high} proves
	\eqref{eq:absolutemultiplier}.
	
	Finally, by Plancherel's theorem,
	\[
	\begin{aligned}
		\|T_{\Omega,I}h\|_2
	 \leq
		\sup_{\xi\neq0}
		\left|
		\sum_{j\in I}
		\widehat K_j^\Omega(\xi)
		\right|
		\|h\|_2 
	 		 \lesssim_d
		C_{\Omega,1}\|h\|_2.
	\end{aligned}
	\]
	This proves the last assertion.
\end{proof}

\begin{remark}
	By \eqref{eq:absolutemultiplier}, Plancherel's theorem and
	dominated convergence imply that, for every $h\in L_2(\A)$,
	the sequence $T_{\Omega,[-J,J]}h$ converges in $L_2(\A)$ as
	$J\to\infty$.
		For
	$
	f\in
	C_c^\infty(\R^d)\otimes(\M\cap L_1(\M)),
	$
	this limit agrees with the principal-value operator
	\eqref{eq:mainoperator}; see \cite[Section~1]{Seeger}.
	We therefore denote the $L_2$-limit by $T_\Omega h$. In particular,
	\begin{equation}\label{eq:L2-convergence}
		T_{\Omega,[-J,J]}h
		\longrightarrow
		T_\Omega h
		\qquad\text{in }L_2(\A),
		\qquad h\in L_2(\A),
	\end{equation}
	and
	\[
	\|T_\Omega h\|_2
	\lesssim_d
	C_{\Omega,1}\|h\|_2.
	\]
\end{remark}

For the microlocal decomposition we follow \cite[Section~2]{Seeger}.
Throughout the paper, set $n_0=100$. For every integer $s\geq n_0$,
choose points $(e_\nu^s)_\nu\subset S^{d-1}$ and a measurable
partition $(E_\nu^s)_\nu$ of $S^{d-1}$, with $e_\nu^s\in E_\nu^s$,
such that the following properties hold.
\begin{enumerate}[label=\textup{(\roman*)},leftmargin=2.2em]
    \item
    \[
        \operatorname{diam}(E_\nu^s)\leq2^{-s},
        \qquad
        |e_\nu^s-e_{\nu'}^s|\geq c_d2^{-s}
        \quad(\nu\ne\nu').
    \]

    \item
    \begin{equation}\label{eq:angularpartition}
        \#\{\nu\}\lesssim_d2^{s(d-1)}.
    \end{equation}
\end{enumerate}
Choose $\psi\in C_c^\infty(\R)$ such that
\[
    \psi(t)=1\quad(|t|\leq2),
    \qquad
    \supp\psi\subset[-4,4].
\]
For each index $\nu$, define the Fourier multiplier $P_\nu^s$ by
\[
    \widehat{P_\nu^s h}(\xi)
    =\psi\left(
        2^{s/2}\left\langle e_\nu^s,\frac{\xi}{|\xi|}\right\rangle
      \right)\widehat h(\xi),
    \qquad \xi\ne0.
\]
For $j\in\Z$ and $\Psi\in L_\infty(S^{d-1})$, define
\begin{equation}\label{eq:Seegerkernels}
    H_{j\nu}^{s,\Psi}(x)
    =K_j^\Psi(x)
      \one_{E_\nu^s}\left(\frac{x}{|x|}\right),
    \qquad
    \Gamma_j^{s,\Psi}=\sum_\nu P_\nu^sH_{j\nu}^{s,\Psi}.
\end{equation}
Thus Seeger's microlocal decomposition takes the form
\begin{equation}\label{eq:Seegersplitting}
    K_j^\Psi
    =\Gamma_j^{s,\Psi}
     +\bigl(K_j^\Psi-\Gamma_j^{s,\Psi}\bigr),
\end{equation}
as in \cite[(2.2)]{Seeger}.

We conclude this subsection with two estimates associated with the
microlocal decomposition.
\begin{lemma}\label{lem:overlap}
	 Let $s\geq n_0$ be an integer.  For every finite family $U_\nu\in L_2(\A)$,
		\[
		\left\|\sum_\nu P_\nu^sU_\nu\right\|_2^2
		\lesssim_d2^{s(d-3/2)}\sum_\nu\norm[2]{U_\nu}^2.
		\]
	\end{lemma}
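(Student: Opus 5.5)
By Plancherel's theorem (Lemma~\ref{lem:basic-integration}(iv)), the estimate reduces to a pointwise bound on the Fourier side. Write $\chi_\nu(\xi)=\psi\bigl(2^{s/2}\langle e_\nu^s,\xi/|\xi|\rangle\bigr)$. Then
\[
\Bigl\|\sum_\nu P_\nu^sU_\nu\Bigr\|_2^2=(2\pi)^{-d}\int_{\R^d}\Bigl\|\sum_\nu \chi_\nu(\xi)\widehat{U_\nu}(\xi)\Bigr\|_{L_2(\M)}^2\dd\xi .
\]
For each fixed $\xi\neq0$, the triangle inequality in $L_2(\M)$ followed by Cauchy--Schwarz over the indices $\nu$ with $\chi_\nu(\xi)\neq0$ gives
\[
\Bigl\|\sum_\nu \chi_\nu(\xi)\widehat{U_\nu}(\xi)\Bigr\|_{L_2(\M)}^2\leq N(\xi)\,\|\psi\|_\infty^2\sum_\nu\|\widehat{U_\nu}(\xi)\|_{L_2(\M)}^2 ,
\]
where $N(\xi)=\#\{\nu:\ |\langle e_\nu^s,\xi/|\xi|\rangle|\leq 4\cdot2^{-s/2}\}$. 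This step involves no noncommutativity: the operator values enter only through the Hilbert space norm of $L_2(\M)$. Integrating in $\xi$ and applying Plancherel again to each $U_\nu$, it therefore suffices to show
\[
\sup_{\xi\neq0}N(\xi)\lesssim_d 2^{s(d-3/2)} .
\]

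The key step is this counting estimate. Fix a unit vector $\omega=\xi/|\xi|$. The points $e_\nu^s$ we need to count lie in the equatorial band
\[
B_\omega=\{\theta\in S^{d-1}:|\langle\theta,\omega\rangle|\leq4\cdot2^{-s/2}\}.
\]
They are $c_d2^{-s}$-separated, so the spherical caps of radius $c_d2^{-s}/2$ centred at them are pairwise disjoint. Each such cap has measure $\approx_d 2^{-s(d-1)}$. Since $s\geq n_0$, the radius $c_d2^{-s}/2$ is much smaller than $2^{-s/2}$, so every cap is contained in the enlarged band $\{|\langle\theta,\omega\rangle|\leq5\cdot2^{-s/2}\}$. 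This band has $\sigma$-measure $\lesssim_d 2^{-s/2}$, because the density of $\langle\theta,\omega\rangle$ on $S^{d-1}$ is bounded for $d\geq2$. Comparing total measures gives
\[
N(\xi)\lesssim_d 2^{-s/2}\cdot2^{s(d-1)}=2^{s(d-3/2)} ,
\]
uniformly in $\xi$. Combining this with the previous paragraph completes the proof.

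The only place that needs care is this geometric counting. Two points must be checked: the separation $|e_\nu^s-e_{\nu'}^s|\geq c_d2^{-s}$ must be used in place of the cruder partition count \eqref{eq:angularpartition}, which would only give $2^{s(d-1)}$; and the band measure bound must hold uniformly in $\omega$. Both are standard, so I expect no real obstacle.
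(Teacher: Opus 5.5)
Your proposal is correct and follows essentially the same route as the paper. Both arguments apply Plancherel, then Cauchy--Schwarz over the indices $\nu$ with $\psi(2^{s/2}\langle e_\nu^s,\xi/|\xi|\rangle)\neq0$, and then use the overlap count $\sup_{\xi\neq0}N(\xi)\lesssim_d 2^{s(d-3/2)}$, which comes from the $c_d2^{-s}$-separation of the $e_\nu^s$ inside a band of width $\approx 2^{-s/2}$. The only difference is that you prove this count by the disjoint-caps volume argument, whereas the paper cites Seeger's count (2.3)--(2.4); for $d=2$ the density of $\langle\theta,\omega\rangle$ is bounded only near $0$, but that is all your argument uses.
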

	
	\begin{proof}
		Seeger's count \cite[(2.3)--(2.4)]{Seeger}, with $\kappa=1/2$, gives
		\[
		\sup_{\xi\ne0}\#\{\nu:\psi\bigl(2^{s/2}\langle e_\nu^s,\xi/|\xi|\rangle\bigr)\ne0\}
		\leq C_d2^{s(d-3/2)}.
		\]
		Indeed, a nonzero multiplier requires
		$|\langle e_\nu^s,\xi/|\xi|\rangle|\leq4\,2^{-s/2}$, and the
		points $e_\nu^s$ are separated by at least $c_d2^{-s}$.
		Thus $\sum_\nu|\psi\bigl(2^{s/2}\langle e_\nu^s,\xi/|\xi|\rangle\bigr)|^2\leq C_d2^{s(d-3/2)}$.
	The Cauchy--Schwarz inequality  and Plancherel's theorem now imply
		\[
		\begin{aligned}
			\left\|\sum_\nu P_\nu^sU_\nu\right\|_2^2
			&=(2\pi)^{-d}\int
			\left\|\sum_\nu\psi\bigl(2^{s/2}\langle e_\nu^s,\xi/|\xi|\rangle\bigr)\widehat U_\nu(\xi)\right\|_{L_2(\M)}^2\dd\xi\\
			&\leq(2\pi)^{-d}\int\sum_\nu|\psi\bigl(2^{s/2}\langle e_\nu^s,\xi/|\xi|\rangle\bigr)|^2
			\sum_\nu\norm[L_2(\M)]{\widehat U_\nu(\xi)}^2\dd\xi\\
			&\leq C_d2^{s(d-3/2)}\sum_\nu\norm[2]{U_\nu}^2.
		\end{aligned}
		\]	
	\end{proof}
	The following lemma is the operator-valued version of
	\cite[Lemma~2.2]{Seeger}.
	
	\begin{lemma}\label{lem:remainder}
		Let $s\geq n_0$  be an integer  and $j\in\mathbb Z$. Let
		$h_Q\in L_1(\A)$ be supported in a cube $Q$ of side length
		$2^{j-s}$ and satisfy
		\[
		\int_Q h_Q(x)\dd x=0.
		\]
		Then, for every $\Psi\in L_\infty(S^{d-1})$,
		\begin{equation}\label{eq:remainder}
			\|(K_j^\Psi-\Gamma_j^{s,\Psi})*h_Q\|_{L_1(\A)}
			\lesssim_d
			\|\Psi\|_\infty\,2^{-s/4}\|h_Q\|_{L_1(\A)}.
		\end{equation}
	\end{lemma}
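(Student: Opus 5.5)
The plan is to reduce to Seeger's scalar Lemma~2.2 by writing the error kernel $R_j:=K_j^\Psi-\Gamma_j^{s,\Psi}$ as a sum of pieces and estimating each piece in $L_1(\R^d)$ after using the cancellation of $h_Q$. Since $K_j^\Psi=\sum_\nu H_{j\nu}^{s,\Psi}$, we have
\[
R_j=\sum_\nu (I-P_\nu^s)H_{j\nu}^{s,\Psi}.
\]
The operator $R_j$ is convolution with a scalar tempered distribution. The key observation is that, for a scalar kernel $L$ and an operator-valued $h_Q$ with mean zero supported in $Q$ (center $c_Q$),
\[
L*h_Q(x)=\int_Q\bigl(L(x-y)-L(x-c_Q)\bigr)h_Q(y)\dd y .
\]
Minkowski's inequality in $L_1(\A)=L_1(\R^d;L_1(\M))$ then gives
\[
\|L*h_Q\|_{L_1(\A)}\le \sup_{y\in Q}\|L(\cdot-y)-L(\cdot-c_Q)\|_{L_1(\R^d)}\,\|h_Q\|_{L_1(\A)} .
\]
So the operator-valued estimate follows from the scalar $L_1$-Hörmander-type bound
\[
\sup_{|y|\lesssim 2^{j-s}}\Bigl\|\sum_\nu \bigl[(I-P_\nu^s)H_{j\nu}^{s,\Psi}\bigr](\cdot-y)-\bigl[(I-P_\nu^s)H_{j\nu}^{s,\Psi}\bigr](\cdot)\Bigr\|_{L_1(\R^d)}\lesssim_d \|\Psi\|_\infty 2^{-s/4}.
\]
Equivalently, one can apply Seeger's scalar argument directly, since every step there is a Minkowski or kernel estimate that transfers verbatim. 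To be safe, I would rederive the bound along Seeger's lines.

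\textbf{Step 1 (scaling).} By dilation invariance of all quantities, we may reduce to $j=0$, so that $Q$ has side length $2^{-s}$.

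\textbf{Step 2 (kernel bounds for a single $\nu$).} Write $(I-P_\nu^s)H_{0\nu}$ with Fourier multiplier $1-\psi(2^{s/2}\langle e_\nu,\xi/|\xi|\rangle)$. This multiplier is supported where $|\langle e_\nu,\xi'\rangle|\ge 2\cdot2^{-s/2}$. On that region, $\widehat{H_{0\nu}}$ decays rapidly in the direction $e_\nu$: $H_{0\nu}$ is supported in a $1\times 2^{-s}$ tube-like sector around direction $e_\nu$, and radial integration by parts with the smooth $\beta$ produces the decay. Using a Littlewood--Paley decomposition in $|\xi|$, the pieces with $|\xi|\le 2^{s(1-\epsilon)}$ are negligible: repeated integration by parts along $e_\nu$ gains powers of $(2^{-s/2}|\xi|)^{-1}$ against the smooth radial profile. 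Concretely, I would show
\[
\|(I-P_\nu^s)H_{0\nu}\,\text{(frequencies }\le 2^{s(1-\epsilon)})\|_{L_1}\lesssim \|\Psi\|_\infty\sigma(E_\nu^s)\,2^{-Ns\epsilon}.
\]

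\textbf{Step 3 (high frequencies).} For frequencies $|\xi|\ge 2^{s(1-\epsilon)}$ I would not use the projection at all. Instead I would exploit the translation difference $h_Q$, or rather, for these pieces, bound them without cancellation but with smallness coming from the $L_2$ orthogonality of Lemma~\ref{lem:overlap}-type counting. This step is the main obstacle: obtaining an $L_1$ (not $L_2$) bound with gain $2^{-s/4}$. I would follow Seeger by combining the cancellation with the difference estimate $|\nabla|$-gain on frequencies $\lesssim 2^{s}$: the mean-zero property yields a factor $2^{-s}|\xi|$, and summing $\nu$ over $\#\nu\sim 2^{s(d-1)}$ against $\sigma(E_\nu)\sim2^{-s(d-1)}$ gives total $\|\Psi\|_\infty$ times the gain.

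\textbf{Step 4 (balancing).} Choosing the splitting exponent appropriately (roughly a threshold $2^{3s/4}$) balances the two gains. This produces $2^{-s/4}$, and after summing in $\nu$ it yields \eqref{eq:remainder}.
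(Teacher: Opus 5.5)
Your reduction is exactly the paper's. You use $\int_Qh_Q=0$ to write $L*h_Q(x)=\int_Q\bigl(L(x-y)-L(x-c_Q)\bigr)h_Q(y)\dd y$, then apply Minkowski in $L_1(\R^d;L_1(\M))$ to reduce to the scalar bound $\sup_{y\in Q}\|R(\cdot-y)-R(\cdot-c_Q)\|_{L_1(\R^d)}\lesssim_d\|\Psi\|_\infty2^{-s/4}$ for $R=K_j^\Psi-\Gamma_j^{s,\Psi}$. The paper then quotes the proof of Seeger's Lemma~2.2 with $\kappa=1/2$, which gives $M_02^{-s\varepsilon}+M_N2^{s(d+(\varepsilon-1/2)N)}$. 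It checks $M_N\lesssim_{N,d}\|\Psi\|_\infty$ for $K_j^\Psi(r\theta)=\beta(2^{-j}r)^2\Psi(\theta)r^{-d}$ and takes $\varepsilon=1/4$, $N=4d+4$. Had you stopped at your remark that Seeger's scalar argument applies directly, with that check, you would have the paper's proof. The rederivation you commit to in Steps 2--4, however, has a genuine gap: the two sources of smallness are assigned to the wrong frequency ranges.

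\textbf{Step 2 fails.} Take $\theta\in E_\nu^s$ and $\xi$ in the support of $1-\psi(2^{s/2}\langle e_\nu^s,\xi/|\xi|\rangle)$. Integration by parts in $r$ only gives $|\Phi(2^j\langle\theta,\xi\rangle)|\lesssim_N(1+2^{j-s/2}|\xi|)^{-N}$, with $\Phi$ as in the proof of Lemma~\ref{lem:fourier}. This is small only for $|\xi|\gg2^{s/2-j}$. Concretely, let $j=0$, $\Psi\equiv1$, and $\xi$ parallel to $e_\nu^s$ with $|\xi|$ small. Then the symbol of $(I-P_\nu^s)H_{0\nu}^{s,\Psi}$ is $\int_{E_\nu^s}\Phi(\langle\theta,\xi\rangle)\dd\sigma(\theta)\approx\Phi(0)\sigma(E_\nu^s)$. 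So the part with $|\xi|\le2^{s(1-\epsilon)}$ is not $O(\sigma(E_\nu^s)2^{-Ns\epsilon})$ in $L_1$.

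\textbf{Step 3 fails.} The mean-zero factor $2^{-s}|\xi|$ is at least $2^{-s\epsilon}$ on $|\xi|\ge2^{s(1-\epsilon)}$ and at least $1$ on $|\xi|\ge2^s$, so cancellation gives nothing there. An $L_2$ orthogonality count cannot yield an $L_1$ bound either. At these frequencies the only source of smallness is the factor $1-\psi(2^{s/2}\langle e_\nu^s,\xi/|\xi|\rangle)$, which is exactly what you propose to discard. Without it, $K_j^\Psi*h_Q$ need not be small, since $\Psi$ may oscillate at angular scale $2^{-s}$. Step 4 therefore has nothing to balance.

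\textbf{The correct split is the reverse (Seeger's).} Let $L_k$ be a Littlewood--Paley piece at $|\xi|\sim2^k$.
\begin{enumerate}
\item For $2^k\le2^{3s/4-j}$, use cancellation: $\|L_kR(\cdot-y)-L_kR(\cdot-c_Q)\|_1\lesssim2^{k+j-s}\sum_\nu\|\widetilde L_k(I-P_\nu^s)H_{j\nu}^{s,\Psi}\|_1\lesssim2^{k+j-s}\|\Psi\|_1$. This needs a fact absent from your sketch: $\widetilde L_kP_\nu^s$ has an $L_1$ kernel bounded uniformly in $k,\nu,s$, because its symbol is adapted to $\{|\xi|\sim2^k,\ |\langle e_\nu^s,\xi\rangle|\lesssim2^{k-s/2}\}$. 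Summing in $k$ gives $2^{-s/4}$.
\item For $2^k\ge2^{3s/4-j}$, use no cancellation. Instead use the decay $(2^{j+k-s/2})^{-N}\le2^{-sN/4}$ of $(1-\psi)\widehat{H_{j\nu}^{s,\Psi}}$ and its derivatives. Convert this to $L_1$ kernel bounds at a polynomial loss (the $2^{sd}$ in Seeger's bound), sum over $\nu$ and $k$, and take $N$ large.
\end{enumerate}
With the roles assigned this way, your threshold $2^{3s/4-j}$ is the right one.
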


	\begin{proof}
		Set
		\[
		R_j^{s,\Psi}
		=
		K_j^\Psi-\Gamma_j^{s,\Psi},
		\]
		and let $c_Q$ be the center of $Q$. Since $h_Q$ is supported in
		$Q$ and satisfies
		\[
		\int_Q h_Q(y)\dd y=0,
		\]
		we may write
		\[
		R_j^{s,\Psi}*h_Q(x)
		=
		\int_Q
		\bigl[
		R_j^{s,\Psi}(x-y)
		-
		R_j^{s,\Psi}(x-c_Q)
		\bigr]
		h_Q(y)\dd y.
		\]
		By Minkowski's inequality,
		\[
		\begin{aligned}
			\|R_j^{s,\Psi}*h_Q\|_{L_1(\A)}
			&\leq
			\int_Q
			\|h_Q(y)\|_{L_1(\M)}
			\int_{\R^d}
			\left|
			R_j^{s,\Psi}(x-y)
			-
			R_j^{s,\Psi}(x-c_Q)
			\right|
			\dd x\dd y\\
			&\leq
			\sup_{y\in Q}
			\int_{\R^d}
			\left|
			R_j^{s,\Psi}(x-y)
			-
			R_j^{s,\Psi}(x-c_Q)
			\right|
			\dd x\,
			\|h_Q\|_{L_1(\A)}.
		\end{aligned}
		\]
		It therefore remains to estimate the scalar kernel difference.
		
		The proof of \cite[Lemma~2.2]{Seeger}, with $\kappa=1/2$, yields
		\[
		\sup_{y\in Q}
		\int_{\R^d}
		\left|
		R_j^{s,\Psi}(x-y)-R_j^{s,\Psi}(x-c_Q)
		\right|\dd x
		\lesssim_{N,d}
		M_0\,2^{-s\varepsilon}
		+
		M_N\,2^{s(d+(\varepsilon-\frac12)N)},
		\]
		for $0\leq\varepsilon\leq1$ and  every integer  $N\geq d+1$, where $M_N$ is the
		quantity appearing in \cite[(2.1)]{Seeger}. In the present setting,
		\[
		K_j^\Psi(r\theta)
		=
		\beta(2^{-j}r)^2\Psi(\theta)r^{-d},
		\]
		so that
		\[
		M_N\lesssim_{N,d}\|\Psi\|_\infty.
		\]
		Taking
		$
		\varepsilon=\frac14,
		N=4d+4,
		$
		we have
		$
		d+\left(\frac14-\frac12\right)N=-1.
		$
		Hence
		\[
		\begin{aligned}
			&\sup_{y\in Q}
			\int_{\R^d}
			\left|
			R_j^{s,\Psi}(x-y)
			-
			R_j^{s,\Psi}(x-c_Q)
			\right|\dd x\\
			&\qquad\lesssim_d
			\|\Psi\|_\infty
			\bigl(2^{-s/4}+2^{-s}\bigr)
			\lesssim_d
			\|\Psi\|_\infty2^{-s/4}.
		\end{aligned}
		\]
		Substituting this estimate into the preceding inequality gives
		\[
		\|(K_j^\Psi-\Gamma_j^{s,\Psi})*h_Q\|_{L_1(\A)}
		\lesssim_d
		\|\Psi\|_\infty\,2^{-s/4}
		\|h_Q\|_{L_1(\A)},
		\]
		which proves the lemma.
	\end{proof}

\subsection{A decomposition of the kernels}\label{subsec-kernel-decom}

Fix $\delta=1/16$. For every integer $s\geq n_0$, following
\cite[Section~3]{Seeger} and \cite[Section~3B]{Lai}, set
\[
    \Omega_s^\lo
    =\Omega\one_{\{|\Omega|\leq2^{\delta s}\|\Omega\|_1\}},
    \qquad
    \Omega_s^\hi
    =\Omega\one_{\{|\Omega|>2^{\delta s}\|\Omega\|_1\}}.
\]
The following properties will be used repeatedly.
\begin{enumerate}[label=\textup{(\roman*)},leftmargin=2.2em]
    \item
   $
        \Omega=\Omega_s^\lo+\Omega_s^\hi.
    $

    \item
    $
        \|\Omega_s^\lo\|_\infty
        \leq2^{\delta s}\|\Omega\|_1.
    $

    \item By the linearity of $\Psi\mapsto K_j^\Psi$ and
    \eqref{eq:Seegersplitting},
    \begin{equation}\label{eq:amplitudekernels}
        K_j^\Omega
        =\Gamma_j^{s,\Omega_s^\lo}
        +\bigl(K_j^{\Omega_s^\lo}
        -\Gamma_j^{s,\Omega_s^\lo}\bigr)
        +K_j^{\Omega_s^\hi}.
    \end{equation}
\end{enumerate}

\subsection{Noncommutative Calder\'on--Zygmund decomposition}

In this subsection, we first work under the assumptions
\begin{equation}\label{eq:finitemodel}
    f\in L_1(\A)_+\cap\A,
    \qquad
    \supp_{\R^d}f\ \text{is compact},
    \qquad
    f=\E_Mf\quad\text{for some }M\in\Z.
\end{equation}
We begin with Cuculescu's construction; see
\cite[Lemma~3.1]{Parcet} and \cite[Section~1]{CCP}.

\begin{lemma}\label{lem:cuculescu}
 Let $f$ satisfy \eqref{eq:finitemodel}	and  $\alpha>0$. There exists a decreasing sequence of projections
	$(q_k)_{k\in\Z}$ such that $q_k=1$ for all sufficiently negative
	$k$, and, for the remaining $k$,
	\[
	q_k
	=
	\one_{[0,\alpha]}(q_{k-1}f_kq_{k-1})
	\quad\text{in }q_{k-1}\A q_{k-1}.
	\]
	Moreover, for every $k\in\Z$,
	\begin{enumerate}[label=\textup{(\roman*)},leftmargin=2.2em]
		\item $q_k\in\A_k$ and $
		q_kf_kq_k\leq\alpha q_k.
		$
				\item $q_k$ commutes with $q_{k-1}f_kq_{k-1}$;
				\item The following  assertion holds
			$$\alpha\varphi(1-q)\leq\norm[1]{f},$$
			where
			$q=\bigwedge_{k\in\Z}q_k.$
	\end{enumerate}
\end{lemma}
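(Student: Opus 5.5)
The plan is to construct the projections recursively and then verify (i)--(iii), with the weak type bound (iii) as the only substantive step.

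\emph{Start of the recursion.} Since $f$ has compact support and $f\in\A$, for $k$ sufficiently negative every dyadic cube in $\Q_k$ meeting $\supp_{\R^d}f$ has measure $2^{-kd}$ so large that
\[
\|f_k\|_\infty\leq 2^{kd}\norm[1]{f}\leq\alpha .
\]
This uses that $\|f_Q\|_{\M}\leq |Q|^{-1}\int_Q\|f(y)\|_{L_1(\M)}\dd y$ is not quite available. Instead one uses $\|f_Q\|_\M\leq\|f_Q\|_{L_1(\M)}\leq|Q|^{-1}\norm[1]{f}$. Hence there is $k_0$ with $f_k\leq\alpha$ for all $k\leq k_0$, and we set $q_k=1$ there. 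For these $k$ the defining formula $\one_{[0,\alpha]}(f_k)=1$ is consistent.

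\emph{Recursive step and (i), (ii).} For $k>k_0$ define $q_k=\one_{[0,\alpha]}(q_{k-1}f_kq_{k-1})$ by functional calculus in the corner $q_{k-1}\A q_{k-1}$, so that $q_k\leq q_{k-1}$. By induction $q_{k-1}\in\A_{k-1}\subset\A_k$ and $f_k\in\A_k$, so $q_{k-1}f_kq_{k-1}\in\A_k$. Its spectral projections therefore lie in $\A_k$, which gives $q_k\in\A_k$. Property (ii) holds because a spectral projection commutes with its operator. For (i), $q_kq_{k-1}=q_k$ and the spectral bound give
\[
q_kf_kq_k=q_k(q_{k-1}f_kq_{k-1})q_k\leq\alpha q_k .
\]

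\emph{Weak bound (iii).} Write $1-q=\sum_k(q_{k-1}-q_k)$, where the sum is strong-convergent since the $q_k$ decrease. For $k>k_0$, the projection $p_k=q_{k-1}-q_k$ is the spectral projection of $q_{k-1}f_kq_{k-1}$ on $(\alpha,\infty)$ inside $q_{k-1}$, so
\[
\alpha p_k\leq p_kf_kp_k .
\]
Since $p_k\in\A_k$ and $\E_k$ is trace preserving,
\[
\varphi(p_kf_kp_k)=\varphi(p_k\E_k(f)p_k)=\varphi(\E_k(p_kfp_k))=\varphi(p_kfp_k).
\]
Summing over $k$ and using normality of $\varphi$ together with positivity of $f$,
\[
\alpha\varphi(1-q)\leq\sum_k\varphi(p_kfp_k)=\sum_k\varphi(f^{1/2}p_kf^{1/2})=\varphi\bigl(f^{1/2}(1-q)f^{1/2}\bigr)\leq\varphi(f)=\norm[1]{f}.
\]

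\emph{Main obstacle.} The delicate point is the bookkeeping in (iii). One must move $\E_k$ past $p_k$ using the module property, and interchange the infinite sum with the trace using normality. The hypothesis $f=\E_Mf$ is not really needed here. It only guarantees that $q_k$ stabilizes for $k\geq M$, because then $f_k=f$ and $q_kfq_k\leq\alpha q_k$ gives $q_{k+1}=q_k$. This makes $q=q_M$ and turns the sum into a finite one, so the normality argument becomes trivial.
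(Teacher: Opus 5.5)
Your recursion, the verification of (i)--(ii), and the proof of (iii) are the standard Cuculescu argument, which the paper imports from Parcet and Cadilhac--Conde-Alonso--Parcet without proof. In (iii) you use $\alpha p_k\le p_kf_kp_k$, the module and trace-preserving properties of $\E_k$, and normality of $\varphi$. All of that is correct, and so is your remark that $f=\E_Mf$ forces $q_k=q_M$ for $k\ge M$.

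\textbf{The initialization step fails as written.} The inequality $\|f_Q\|_\M\le\|f_Q\|_{L_1(\M)}$ is false for a general semifinite trace. It holds for $B(H)$ with the usual trace, but not for $\M=L_\infty([0,1])$ with Lebesgue measure, where $e=\one_{[0,\varepsilon]}$ has $\|e\|_\M=1>\varepsilon=\|e\|_{L_1(\M)}$. Consequently the bound $\|f_k\|_\infty\le 2^{kd}\|f\|_1$ is also false. Take $f=\one_{[0,1]^d}\otimes e$, which satisfies \eqref{eq:finitemodel} with $M=0$, and $k\le0$. The cube $Q\in\Q_k$ containing $[0,1)^d$ has $f_Q=2^{kd}e$, of norm $2^{kd}$, while $2^{kd}\|f\|_1=2^{kd}\varepsilon$.

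\textbf{The fix.} The conclusion you need is still true, but it must come from the boundedness assumption $f\in\A$ in \eqref{eq:finitemodel}, not from $\|f\|_1$. Let $S=\supp_{\R^d}f$. The projection $\one_S\otimes\one_\M$ is central in $\A$ and $f=(\one_S\otimes\one_\M)f$, so $0\le f\le\|f\|_\A(\one_S\otimes\one_\M)$. Positivity of $\E_k$ then gives
\[
0\le f_k\le\|f\|_\A\,\E_k(\one_S\otimes\one_\M)=\|f\|_\A\sum_{Q\in\Q_k}\frac{|Q\cap S|}{|Q|}\,\one_Q\otimes\one_\M\le 2^{kd}|S|\,\|f\|_\A,
\]
which is $\le\alpha$ for all sufficiently negative $k$. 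Equivalently, put the $\M$-norm rather than the $L_1(\M)$-norm inside the average: $\|f_Q\|_\M\le|Q|^{-1}\int_Q\|f(y)\|_\M\dd y$. With this replacement the start of the recursion is justified and your proof is complete.
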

Set
\[
p_k=q_{k-1}-q_k.
\]
Since $q_k\in\A_k$, we may write
\[
q_k=\sum_{Q\in\Q_k}q_Q\one_Q,
\]
where each $q_Q$ is a projection in $\M$. If $\widehat Q$ denotes
the dyadic parent of $Q$, define
\[
\pi_Q=q_{\widehat Q}-q_Q,
\qquad Q\in\Q_k.
\]
Then
\[
p_k=q_{k-1}-q_k
=\sum_{Q\in\Q_k}\pi_Q\one_Q.
\]
The following standard properties of the projections $(p_k)_{k\in\Z}$
follow from Cuculescu's construction and the dyadic regularity of the filtration, which can be found in
\cite{Parcet}.

\begin{lemma}\label{lem:pk-properties}
 Let $f$ satisfy \eqref{eq:finitemodel}.	For the projections defined above, the following assertions hold.
	\begin{enumerate}[label=\textup{(\roman*)},leftmargin=2.2em]
		\item
		For every $k\in\Z$,
		$
		p_k\in\A_k.
		$
		Moreover, the projections $(p_k)_{k\in\Z}$ are pairwise
		orthogonal and
		\[
		\sum_{k\in\Z}p_k=1-q.
		\]
		
		\item
		For every $k\in\Z$,
		\[
		p_kf_kq_k=q_kf_kp_k=0.
		\]
		
		\item
		We have
		\begin{equation}\label{eq:massbudget}
			\alpha\varphi(p_k)\leq\varphi(fp_k),
			\qquad
			\sum_{k\in\Z}\varphi(fp_k)\leq\norm[1]{f}.
		\end{equation}
		
		\item
		The dyadic regularity gives
		\begin{equation}\label{eq:cubeproperties}
			p_kf_kp_k\leq2^d\alpha p_k,
			\qquad
			\pi_Qf_Q\pi_Q\leq2^d\alpha\pi_Q
			\quad(Q\in\Q_k).
		\end{equation}
	\end{enumerate}
\end{lemma}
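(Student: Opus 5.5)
The argument is a direct unwinding of Lemma~\ref{lem:cuculescu}, using three facts: the filtration is increasing, $q_k$ is a spectral projection of $x_k:=q_{k-1}f_kq_{k-1}$ in $q_{k-1}\A q_{k-1}$, and, for positive $f$, dyadic doubling gives $f_k\leq 2^df_{k-1}$. For sufficiently negative $k$ we have $q_{k-1}=q_k=1$, so $p_k=0$ and every claim is trivial; we may therefore assume the recursive formula holds at $k$.

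\textbf{Step (i).} Since $q_{k-1}\in\A_{k-1}\subset\A_k$ and $q_k\in\A_k$, also $p_k\in\A_k$. Because $q_k\leq q_{k-1}$, the difference $p_k$ is a projection with $p_k\leq q_{k-1}$ and $p_kq_k=0$. For $k<l$ we have $q_{l-1}\leq q_k$, hence $p_kq_{l-1}=0$ and $p_kq_l=0$, which gives $p_kp_l=p_k(q_{l-1}-q_l)=0$. The partial sums telescope:
\[
\sum_{k=-N}^{N}p_k=q_{-N-1}-q_N=1-q_N \quad\text{for large }N.
\]
Since the $q_N$ decrease strongly to $q$, the sum converges strongly to $1-q$.

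\textbf{Step (ii).} Write $p_kf_kq_k=p_kq_{k-1}f_kq_{k-1}q_k=p_kx_kq_k$. By Lemma~\ref{lem:cuculescu}(ii), $q_k$ commutes with $x_k$, so
\[
p_kx_kq_k=p_kq_kx_k=0.
\]
Taking adjoints gives $q_kf_kp_k=0$.

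\textbf{Step (iii).} Inside $q_{k-1}\A q_{k-1}$, $p_k=q_{k-1}-q_k$ is the spectral projection $\one_{(\alpha,\infty)}(x_k)$. Hence $p_kf_kp_k=p_kx_kp_k\geq\alpha p_k$. Since $p_k\in\A_k$ and $\E_k$ preserves the trace,
\[
\varphi(fp_k)=\varphi(\E_k(fp_k))=\varphi(f_kp_k)=\varphi(p_kf_kp_k)\geq\alpha\varphi(p_k).
\]
For the summed bound, write $\varphi(fp_k)=\varphi(f^{1/2}p_kf^{1/2})\geq0$. By normality of $\varphi$ and Step (i),
\[
\sum_k\varphi(fp_k)=\varphi\bigl(f^{1/2}(1-q)f^{1/2}\bigr)\leq\varphi(f)=\norm[1]{f}.
\]

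\textbf{Step (iv).} This is the only place where positivity of $f$ enters, and it is the step needing care. For $Q\in\Q_k$ with parent $\widehat Q$,
\[
f_Q=\frac1{|Q|}\int_Qf\leq\frac{2^d}{|\widehat Q|}\int_{\widehat Q}f=2^df_{\widehat Q},
\]
since $f\geq0$. Hence $f_k\leq2^df_{k-1}$ as operators. Using $p_k=p_kq_{k-1}$ and Lemma~\ref{lem:cuculescu}(i) at level $k-1$,
\[
p_kf_kp_k\leq2^dp_kq_{k-1}f_{k-1}q_{k-1}p_k\leq2^d\alpha\,p_kq_{k-1}p_k=2^d\alpha p_k.
\]
For the cube-wise statement, restrict everything to a fixed $Q\in\Q_k$. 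There $p_k=\pi_Q$, $q_{k-1}=q_{\widehat Q}$ and $f_{k-1}=f_{\widehat Q}$, so the same chain gives $\pi_Qf_Q\pi_Q\leq2^d\pi_Qq_{\widehat Q}f_{\widehat Q}q_{\widehat Q}\pi_Q\leq2^d\alpha\pi_Q$. When $q_{k-1}$ is trivial, i.e.\ $q_{k-1}=1$, then $p_k=1-q_k=\one_{(\alpha,\infty)}(f_k)$ and the same bound uses $q_{k-1}f_{k-1}q_{k-1}=f_{k-1}\leq\alpha$. This holds because $q_{k-1}=1$ forces $q_{k-1}f_{k-1}q_{k-1}\leq\alpha q_{k-1}=\alpha$ by Lemma~\ref{lem:cuculescu}(i).
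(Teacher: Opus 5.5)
Your argument is correct, and it is the standard derivation from Cuculescu's construction and dyadic regularity that the paper does not reproduce but simply defers to Parcet. The pieces match that standard route: telescoping plus strong convergence $q_N\downarrow q$ for (i), the commutation of $q_k$ with $x_k=q_{k-1}f_kq_{k-1}$ for (ii), $p_k=\one_{(\alpha,\infty)}(x_k)$ with trace preservation of $\E_k$ and normality of $\varphi$ for (iii), and $f_k\le 2^df_{k-1}$ with $q_{k-1}f_{k-1}q_{k-1}\le\alpha q_{k-1}$ for (iv).

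One inaccuracy in your commentary: (iv) is not the only place where positivity of $f$ enters. Step (iii) also uses it, since $x_k\ge0$ is what gives $q_{k-1}-q_k=\one_{(\alpha,\infty)}(x_k)$, $\varphi(fp_k)=\varphi(f^{1/2}p_kf^{1/2})$ and $\varphi(f)=\norm[1]{f}$. This does not affect the proof.
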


The noncommutative Calder\'on--Zygmund decomposition of
\cite[(1.1) and Lemma~1.1]{CCP}, also stated in
\cite[Theorem~2.5]{HLX}, takes the following form.

\begin{theorem}\label{thm:regularcz}
 Let $f$ satisfy \eqref{eq:finitemodel} and let $\alpha>0$.
Let $(q_k)_k$, $(p_k)_k$, and $q$ be the projections introduced above.
Then
	\begin{equation}\label{eq:czdecomp}
		f=g+\bd+\boff,
	\end{equation}
	where the following assertions hold.
    \begin{enumerate}[label=\textup{(\roman*)},leftmargin=2.2em]
        \item
        $
            g=qfq+\sum_kp_kf_kp_k,
        $
        with
        \begin{equation}\label{eq:czbounds}
            \norm[1]{g}\leq\norm[1]{f},
            \qquad
            \norm[\infty]{g}\leq2^d\alpha.
        \end{equation}

        \item
        $
            \bd=\sum_kb_{\mathrm d,k},
           $  where $$
            b_{\mathrm d,k}=p_k(f-f_k)p_k,
        $$
        satisfying
        \[
            \E_kb_{\mathrm d,k}=0,
            \qquad
            \sum_k\norm[1]{b_{\mathrm d,k}}
            \leq2\norm[1]{f}.
        \]

        \item
        $
            \boff=\sum_kb_{\mathrm{off},k},
        $
        where
        \[
            b_{\mathrm{off},k}
            =p_k(f-f_k)q_k+q_k(f-f_k)p_k
            =p_kfq_k+q_kfp_k.
        \]
    \end{enumerate}
\end{theorem}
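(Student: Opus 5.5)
The plan is to verify the decomposition \eqref{eq:czdecomp} algebraically from Lemma~\ref{lem:pk-properties}(i), and then check the three families of bounds one at a time. Because $1=q+\sum_kp_k$ with the $p_k$ pairwise orthogonal, we have
\[
f=\Bigl(q+\sum_kp_k\Bigr)f\Bigl(q+\sum_kp_k\Bigr)
=qfq+\sum_kp_kfp_k+\sum_{k\neq l}p_kfp_l+\sum_k(p_kfq+qfp_k).
\]
Both sums are finite: $q_k=1$ for very negative $k$, and for $k>M$ we have $f_k=f$. Since $q_k$ commutes with $q_{k-1}f_kq_{k-1}$ and $q_k\le q_{k-1}$, it commutes with $q_kfq_k$, and therefore $q_k=q$ from some index on. We next regroup the off-diagonal terms. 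Put $q_k=q+\sum_{l>k}p_l$. Then
\[
\sum_k(p_kfq_k+q_kfp_k)=\sum_k(p_kfq+qfp_k)+\sum_{k\neq l}p_kfp_l,
\]
where each unordered pair $k\neq l$ is counted exactly once, from the smaller index. This gives
\[
f=qfq+\sum_kp_kfp_k+\boff .
\]
Writing $p_kfp_k=p_kf_kp_k+p_k(f-f_k)p_k$ then yields $g+\bd$. The identity $p_kf_kq_k=q_kf_kp_k=0$ from Lemma~\ref{lem:pk-properties}(ii) shows that $p_k(f-f_k)q_k=p_kfq_k$, and likewise for the other term, so the two formulas given for $b_{\mathrm{off},k}$ agree.

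For $\bd$ we argue as follows. Since $p_k\in\A_k$, the bimodule property of $\E_k$ gives $\E_k b_{\mathrm d,k}=p_k(f_k-f_k)p_k=0$. For the norm, positivity of $f$ gives
\[
\|p_kfp_k\|_1=\varphi(fp_k),
\qquad
\|p_kf_kp_k\|_1=\varphi(f_kp_k)=\varphi(\E_k(fp_k))=\varphi(fp_k).
\]
Summing over $k$ and using \eqref{eq:massbudget} yields $\sum_k\|b_{\mathrm d,k}\|_1\le2\|f\|_1$.

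For $g$, every term is positive. By the same computation, $\|g\|_1=\varphi(fq)+\sum_k\varphi(fp_k)=\varphi(f)=\|f\|_1$. For the $L_\infty$ bound, the terms $p_kf_kp_k$ live on orthogonal projections $p_k\perp q$, and each satisfies $p_kf_kp_k\le2^d\alpha p_k$ by \eqref{eq:cubeproperties}. So it remains only to prove $qfq\le\alpha q$.

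This last inequality is the one genuinely delicate step. I would take $k\ge M$ with $q_k=q$. Then Lemma~\ref{lem:cuculescu}(i) gives $qfq=q_kf_kq_k\le\alpha q_k=\alpha q$. Since the three blocks $qfq$ and $p_kf_kp_k$ sit on mutually orthogonal projections, their sum is bounded by the maximum of their bounds, which is $2^d\alpha$. The stabilization $q_k=q$ for large $k$ is what needs care. It rests on the observation already made: once $k>M$ we have $f_k=f$, and each $q_k$ is then a spectral projection of $q_{k-1}fq_{k-1}$, hence fixes the truncation $q_kfq_k\le\alpha q_k$ from the first such step onward.
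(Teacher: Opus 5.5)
Your argument is correct and is essentially the standard verification behind the Cadilhac--Conde-Alonso--Parcet decomposition, which the paper cites without reproducing. You expand $f$ against $1=q+\sum_kp_k$, regroup the off-diagonal terms through $q_k=q+\sum_{l>k}p_l$, and read off the bounds from Lemmas~\ref{lem:cuculescu} and~\ref{lem:pk-properties}.

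One small tidying is worth making. The stabilization $q_k=q$ for $k\geq M$ follows directly from Lemma~\ref{lem:cuculescu}(i): for such $k$ one has $q_kf_{k+1}q_k=q_kf_kq_k\leq\alpha q_k$, which forces $q_{k+1}=\one_{[0,\alpha]}(q_kf_{k+1}q_k)=q_k$. Your remark that $q_k$ commutes with $q_kfq_k$ holds for every projection, so it is not what does the work.
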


The cube decomposition of the diagonal bad part and the associated
localization projection will be used throughout the proof.

\begin{lemma}\label{lem:cz-localization}
    For $k\in\Z$ and $Q\in\Q_k$, set
    \[
        b_Q=\one_Q\pi_Q(f-f_Q)\pi_Q,
    \]
    and define
    \[
        \zeta
        =\left(
          \bigvee_k\bigvee_{Q\in\Q_k}
          \pi_Q\one_{2^{n_0+10}Q}
        \right)^\perp.
    \]
    Then the following assertions hold.
    \begin{enumerate}[label=\textup{(\roman*)},leftmargin=2.2em]
        \item For every $k\in\Z$,
        \[
            b_{\mathrm d,k}=\sum_{Q\in\Q_k}b_Q,
            \qquad
            \int_Qb_Q=0.
        \]
        Moreover,
        \[
            \sum_{Q\in\Q_k}\norm[1]{b_Q}
            =\norm[1]{b_{\mathrm d,k}}.
        \]

        \item The localization projection satisfies
        \begin{equation}\label{eq:localized-projection}
            \varphi(1-\zeta)
            \leq C_d\alpha^{-1}\norm[1]{f}.
        \end{equation}

        \item For every $Q\in\Q_k$,
        \[
            \zeta(x)\pi_Q=\pi_Q\zeta(x)=0,
            \qquad x\in2^{n_0+10}Q.
        \]
    \end{enumerate}
\end{lemma}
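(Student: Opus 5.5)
Part (i) follows by expanding $b_{\mathrm d,k}$ cube by cube. Since $p_k=\sum_{Q\in\Q_k}\pi_Q\one_Q$ and $f_k=\sum_{Q\in\Q_k}f_Q\one_Q$, multiplying by the disjoint indicators $\one_Q$ gives
\[
b_{\mathrm d,k}=p_k(f-f_k)p_k=\sum_{Q\in\Q_k}\one_Q\pi_Q(f-f_Q)\pi_Q=\sum_{Q\in\Q_k}b_Q .
\]
The mean-zero property holds because $\pi_Q\in\M$ is constant on $Q$. Hence
\[
\int_Q b_Q=\pi_Q\Bigl(\int_Qf-|Q|f_Q\Bigr)\pi_Q=0 .
\]
The $b_Q$ have disjoint spatial supports, so $|b_{\mathrm d,k}|^2=\sum_Q|b_Q|^2$ with disjoint supports, and $\varphi(|b_{\mathrm d,k}|)=\sum_Q\varphi(|b_Q|)$. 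This gives the norm identity.

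For (iii), set $e=\bigvee_k\bigvee_{Q\in\Q_k}\pi_Q\one_{2^{n_0+10}Q}$, a projection in $\A$. It dominates each $\pi_Q\one_{2^{n_0+10}Q}$, so for $x\in2^{n_0+10}Q$ we have $\pi_Q\le e(x)$ pointwise. Therefore $\zeta(x)\pi_Q=(1-e(x))\pi_Q=0$, and taking adjoints gives $\pi_Q\zeta(x)=0$. Pointwise evaluation is legitimate since $\A=L_\infty(\R^d)\overline\otimes\M$ is identified with $L_\infty(\R^d;\M)$. Alternatively one can argue algebraically in $\A$: $(1-e)\,\pi_Q\one_{2^{n_0+10}Q}=0$.

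For (ii), use subadditivity of the trace on suprema of projections, $\varphi(\bigvee_i e_i)\le\sum_i\varphi(e_i)$. This gives
\[
\varphi(1-\zeta)\le\sum_k\sum_{Q\in\Q_k}|2^{n_0+10}Q|\,\tau(\pi_Q)=2^{d(n_0+10)}\sum_k\sum_{Q\in\Q_k}\varphi(\pi_Q\one_Q)=2^{d(n_0+10)}\sum_k\varphi(p_k).
\]
By Lemma~\ref{lem:pk-properties}(i), the $p_k$ are orthogonal with $\sum_kp_k=1-q$, so $\sum_k\varphi(p_k)=\varphi(1-q)$. Lemma~\ref{lem:cuculescu}(iii) then bounds this by $\alpha^{-1}\norm[1]{f}$; alternatively, \eqref{eq:massbudget} gives the same bound. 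This yields \eqref{eq:localized-projection} with $C_d=2^{d(n_0+10)}$.

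The only point requiring care is the trace subadditivity for a countable supremum of projections. It follows from the Kaplansky-type inequality $\varphi(e\vee e')\le\varphi(e)+\varphi(e')$ together with normality of $\varphi$. Beyond that, the argument is routine bookkeeping, and I expect no real obstacle.
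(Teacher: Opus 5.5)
Your proof is correct and is essentially the paper's argument. For (i), the paper points to the same cube-by-cube expansion and disjointness of the cubes in $\Q_k$. For (ii)--(iii), the paper just cites Lemma~2.1 of Cadilhac--Conde-Alonso--Parcet, and your domination $\pi_Q\one_{2^{n_0+10}Q}\le 1-\zeta$ together with the trace-subadditivity bound $\varphi(1-\zeta)\le\sum_k\sum_{Q\in\Q_k}|2^{n_0+10}Q|\,\tau(\pi_Q)=2^{d(n_0+10)}\varphi(1-q)\le 2^{d(n_0+10)}\alpha^{-1}\norm[1]{f}$ is precisely the standard proof behind that citation.
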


\begin{proof}
    The identities in (i) follow from the definitions and the
    disjointness of the cubes in $\Q_k$ in the Bochner space
    $L_1(\R^d;L_1(\M))$. Assertions (ii) and (iii) follow from
    \cite[Lemma~2.1]{CCP}; see also \cite[Theorem~2.5]{HLX}.
\end{proof}

We next record the localization consequence used below; compare
\cite[(3-3)]{Lai}.

\begin{lemma}\label{lemm:localsupport}
    Let $k,j\in\Z$ and $h_k\in L_1(\A)$ satisfy
    \[
        (1-p_k)h_k(1-p_k)=0.
    \]
    Let $J_j\in L_1(\R^d)$ be a scalar kernel such that
    \[
        \supp J_j\subset\{x:|x|\leq2^{j+1}\}.
    \]
    Then
    \begin{equation}\label{eq:localsupport}
        \zeta(J_j*h_k)\zeta=0
        \qquad(j+k<n_0).
    \end{equation}
\end{lemma}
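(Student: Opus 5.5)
Since $(1-p_k)h_k(1-p_k)=0$ and $p_k$ is a projection, I would first write
\[
h_k=p_kh_k+(1-p_k)h_kp_k
=p_kh_kp_k+p_kh_k(1-p_k)+(1-p_k)h_kp_k .
\]
Each term has $p_k$ on at least one side. Since $J_j$ is a scalar kernel, convolution commutes with multiplication by constant elements of $\M$ but not with multiplication by $x$-dependent projections. So I will not move $p_k$ through the convolution. Instead I will work pointwise in $x$ and show
\[
\zeta(x)\,(J_j*h_k)(x)\,\zeta(x)=\int J_j(x-y)\,\zeta(x)h_k(y)\zeta(x)\dd y=0
\]
for almost every $x$. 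I will check that the integrand vanishes for each $y$ with $|x-y|\le2^{j+1}$.

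Fix such $x,y$ and let $Q\in\Q_k$ be the cube containing $y$. Then $p_k(y)=\pi_Q$, so every term of $h_k(y)$ carries $\pi_Q$ on the left or on the right. For instance, $\zeta(x)\pi_Qh_k(y)\zeta(x)$ is handled by Lemma~\ref{lem:cz-localization}(iii): this gives $\zeta(x)\pi_Q=0$ whenever $x\in2^{n_0+10}Q$. The terms with $\pi_Q$ on the right are handled the same way, using $\pi_Q\zeta(x)=0$.

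It therefore remains to verify the geometric inclusion $x\in2^{n_0+10}Q$ when $j+k<n_0$. The cube $Q$ has side $2^{-k}$ and contains $y$, and $|x-y|\le2^{j+1}<2^{n_0+1-k}$. The distance from $x$ to the center of $Q$ is thus at most $2^{n_0+1-k}+\sqrt d\,2^{-k}$. The dilate $2^{n_0+10}Q$ has half side $2^{n_0+9-k}$, which dominates this quantity since $\sqrt d\le 2^{n_0}$ for all relevant $d$. (If needed, one can use $\sqrt d\le d\le 2^{d}$; with $n_0=100$ the margin is ample for any fixed dimension, or the constant can be absorbed by harmless adjustment.)

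The main point needing care is measure-theoretic. The identity $\zeta(x)\pi_Q=0$ holds for a.e.\ $x$ in $2^{n_0+10}Q$, and $h_k$ is only in $L_1(\A)$. To justify passing $\zeta(x)$ inside the Bochner integral $\int J_j(x-y)h_k(y)\dd y$, I would first treat $h_k$ in a dense class, namely simple functions or $h_k\in L_1\cap L_\infty$ with the structure $h_k=p_kh_k+(1-p_k)h_kp_k$ preserved. Left and right multiplication by the fixed operator $\zeta(x)\in\M$ are bounded on $L_1(\M)$, so they commute with the Bochner integral. One then passes to the limit using Young's inequality \eqref{eq-Young}. Alternatively, since countably many cubes are involved, a single null set of $x$ can be discarded.
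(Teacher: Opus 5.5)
Your argument is correct and is essentially the paper's proof. The paper runs the same geometry contrapositively: if $y\in Q$ and $x\notin 2^{n_0+10}Q$, then $|x-y|>(2^{n_0+10}-1)2^{-k-1}>2^{j+1}$, so $J_j(x-y)=0$, while for $x\in 2^{n_0+10}Q$ both pieces vanish by Lemma~\ref{lem:cz-localization}(iii).

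The one blemish is your justification of the inclusion. With $n_0=100$ fixed, $\sqrt d\le 2^{n_0}$ fails for very large $d$, and $\sqrt d\le 2^d$ does not rescue it. No dimensional factor is needed if you measure in the $\ell^\infty$ norm, with $c_Q$ the center of $Q$:
\[
\|x-c_Q\|_\infty\le |x-y|+2^{-k-1}\le 2^{n_0-k}+2^{-k-1}<2^{n_0+9-k},
\]
where the middle step uses $j+k\le n_0-1$.
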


\begin{proof}
    Since $(1-p_k)h_k(1-p_k)=0$,
    \[
        h_k=p_kh_k+(1-p_k)h_kp_k.
    \]
    Write
    \[
        p_k=\sum_{Q\in\Q_k}\pi_Q\one_Q.
    \]
    For $y\in Q\in\Q_k$,
    \[
        \begin{aligned}
        &\zeta(x)\int_QJ_j(x-y)h_k(y)\dd y\,\zeta(x)\\
        &\quad=
        \zeta(x)\pi_Q\int_QJ_j(x-y)h_k(y)\dd y\,\zeta(x)\\
        &\qquad+
        \zeta(x)\int_QJ_j(x-y)(1-\pi_Q)h_k(y)\dd y\,\pi_Q\zeta(x).
        \end{aligned}
    \]
    If $x\in2^{n_0+10}Q$, both terms vanish by
    Lemma~\ref{lem:cz-localization}(iii). If
    $x\notin2^{n_0+10}Q$, then for $y\in Q$,
    \[
        |x-y|>
        \bigl(2^{n_0+10}-1\bigr)2^{-k-1}
        >2^{j+1}
    \]
    whenever $j+k<n_0$. Hence $J_j(x-y)=0$, which proves
    \eqref{eq:localsupport}.
\end{proof}

\begin{corollary}\label{cor:badlocalvanishing}
    For $j,k\in\Z$ with $j+k<n_0$,
    \begin{equation}\label{eq:badlocalvanishing}
        \zeta(K_j^\Omega*b_{\mathrm d,k})\zeta
        =\zeta(K_j^\Omega*b_{\mathrm{off},k})\zeta
        =0.
    \end{equation}
\end{corollary}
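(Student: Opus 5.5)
The plan is to obtain Corollary~\ref{cor:badlocalvanishing} as a direct application of Lemma~\ref{lemm:localsupport}, taking $J_j=K_j^\Omega$. Two hypotheses have to be checked.

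\emph{The kernel.} Since $\beta$ is supported in $(1/2,2)$, the kernel $K_j^\Omega(x)=\beta(2^{-j}|x|)^2\Omega(x/|x|)|x|^{-d}$ vanishes unless $2^{j-1}<|x|<2^{j+1}$. Hence $\supp K_j^\Omega\subset\{|x|\leq2^{j+1}\}$. By \eqref{eq:kernelL1}, $K_j^\Omega\in L_1(\R^d)$ because $\Omega\in L_1(S^{d-1})$. So $J_j=K_j^\Omega$ is an admissible scalar kernel.

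\emph{The functions.} We must show that $h_k=b_{\mathrm d,k}$ and $h_k=b_{\mathrm{off},k}$ satisfy $(1-p_k)h_k(1-p_k)=0$.
\begin{itemize}
\item For $b_{\mathrm d,k}=p_k(f-f_k)p_k$, we have $(1-p_k)p_k=0$, so the condition holds immediately.
\item For $b_{\mathrm{off},k}=p_kfq_k+q_kfp_k$, each summand has $p_k$ as its left or right factor. Multiplying by $(1-p_k)$ on the appropriate side kills it, since $(1-p_k)p_k=p_k(1-p_k)=0$. Hence $(1-p_k)b_{\mathrm{off},k}(1-p_k)=0$.
\end{itemize}
The functions also lie in $L_1(\A)$: $f\in L_1(\A)$, $f_k$ is a contraction of it, and the projections are bounded.

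\emph{Conclusion.} Applying \eqref{eq:localsupport} for $j+k<n_0$ gives both identities in \eqref{eq:badlocalvanishing}.

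There is no real obstacle here. The only point needing care is the membership $b_{\mathrm{off},k}\in L_1(\A)$. It follows from Hölder's inequality with bounded projections, $\|p_kfq_k\|_1\leq\|f\|_1$, under the standing assumptions \eqref{eq:finitemodel}.
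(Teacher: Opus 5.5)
Your proposal is correct and follows the paper's own proof: apply Lemma~\ref{lemm:localsupport} with $J_j=K_j^\Omega$, using $\supp\beta\subset(1/2,2)$ for the support condition. The remaining hypothesis is the identity $(1-p_k)h_k(1-p_k)=0$ for $h_k=b_{\mathrm d,k}$ and $h_k=b_{\mathrm{off},k}$, which holds because each term has $p_k$ as a left or right factor.
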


\begin{proof}
    Since $\supp\beta\subset(1/2,2)$,
    \[
        \supp K_j^\Omega
        \subset\{x:2^{j-1}<|x|<2^{j+1}\}
        \subset\{x:|x|\leq2^{j+1}\}.
    \]
    Moreover,
    \[
        (1-p_k)b_{\mathrm d,k}(1-p_k)=0,
        \qquad
        (1-p_k)b_{\mathrm{off},k}(1-p_k)=0.
    \]
    The conclusion follows from Lemma~\ref{lemm:localsupport} with
    $J_j=K_j^\Omega$.
\end{proof}

\subsection{The functions
    \texorpdfstring{$b_{k,\ell}$ and $g_{k,\ell}$}{b and g}}\label{subsec-b-g}

For the analysis of the off-diagonal bad term $\boff$ in
Theorem~\ref{thm:regularcz}, we use two families introduced in
Parcet's original decomposition \cite{Parcet}.

\begin{lemma}\label{lem:lag-blocks}
	Let $f$ satisfy \eqref{eq:finitemodel}, and let $(p_k)_{k\in\Z}$
	and $(q_k)_{k\in\Z}$ be the projections defined above.
	For $k\in\Z$ and every integer $\ell\geq1$, define
    \begin{equation}\label{eq-bkl-gkl}
        \begin{aligned}
            b_{k,\ell}
            &=p_k(f-f_{k+\ell})p_{k+\ell}
              +p_{k+\ell}(f-f_{k+\ell})p_k,\\
            g_{k,\ell}
            &=p_kdf_{k+\ell}q_{k+\ell-1}
              +q_{k+\ell-1}df_{k+\ell}p_k.
        \end{aligned}
    \end{equation}
    Then the following assertions hold.
    \begin{enumerate}[label=\textup{(\roman*)},leftmargin=2.2em]
        \item The $b$-blocks satisfy
        \[
            \sum_k\norm[1]{b_{k,\ell}}
            \lesssim\norm[1]{f},
            \qquad
            \E_{k+\ell}b_{k,\ell}=0.
        \]

        \item The $g$-blocks satisfy
        \[
            \sum_k\norm[2]{g_{k,\ell}}^2
            \lesssim_d\alpha\norm[1]{f}.
        \]
        Moreover,
        \[
            g_{k,\ell}\in\A_{k+\ell},
            \qquad
            \E_{k+\ell-1}g_{k,\ell}=0.
        \]

        \item Under the  assumption \eqref{eq:finitemodel},
        \begin{equation}\label{eq:lagresolution}
            b_{\mathrm{off},k}
            =\sum_{\ell\geq1}(b_{k,\ell}+g_{k,\ell}),
            \qquad
            \boff
            =\sum_{\ell\geq1}\sum_k
            (b_{k,\ell}+g_{k,\ell}).
        \end{equation}
    \end{enumerate}
\end{lemma}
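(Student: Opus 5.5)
We prove the three assertions separately, using only the projection identities of Lemma~\ref{lem:pk-properties}, the fact that the $p_k$ are pairwise orthogonal and sum to $1-q$, and the commutation of the conditional expectations with elements of $\A_k$.

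\emph{Step 1: assertion (iii).} We start from $b_{\mathrm{off},k}=p_kfq_k+q_kfp_k$. Since $q_k=q+\sum_{m>k}p_m$, we need $p_kfq=0$. Under \eqref{eq:finitemodel} we have $f=f_M$, and for $m\ge M$ Cuculescu's construction gives $q_m=q_M$ (the process stabilizes), so $q=q_M$ and $p_m=0$ for $m>M$. Hence $p_kfq=p_kf_Mq_M$. For $k<M$ this equals $\E_M(p_kfq_M)$, and we split $f_M=f_k+\sum_{m=k+1}^M df_m$ to reduce to the telescoping argument below; for $k=M$ it vanishes by Lemma~\ref{lem:pk-properties}(ii).

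The main identity to check is, for each $k$,
\[
p_kfq_k=\sum_{\ell\ge1}\bigl(p_k(f-f_{k+\ell})p_{k+\ell}+p_kdf_{k+\ell}q_{k+\ell-1}\bigr).
\]
Write $p_kfp_{k+\ell}=p_k(f-f_{k+\ell})p_{k+\ell}+p_kf_{k+\ell}p_{k+\ell}$. Then telescope $p_kf_{m}q_{m}$ in $m$ using $q_{m-1}=q_m+p_m$:
\[
p_kf_mq_{m-1}-p_kf_{m-1}q_{m-1}=p_kdf_mq_{m-1},
\qquad p_kf_mq_{m-1}=p_kf_mq_m+p_kf_mp_m.
\]
Starting from $p_kf_kq_k=0$ (Lemma~\ref{lem:pk-properties}(ii)) and summing $m=k+1,\dots,M$, we get $p_kf_Mq_M=\sum_{\ell}(p_kdf_{k+\ell}q_{k+\ell-1}-p_kf_{k+\ell}p_{k+\ell})$. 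Adding $\sum_\ell p_kfp_{k+\ell}$ together with $p_kfq=p_kf_Mq_M$ yields the identity, since all sums are finite. The adjoint term is handled symmetrically. This bookkeeping is the delicate part: the signs and the endpoint $p_kf_kq_k=0$ must line up exactly.

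\emph{Step 2: assertion (i).} Since $p_k,p_{k+\ell}\in\A_{k+\ell}$, we have $\E_{k+\ell}b_{k,\ell}=p_k(f_{k+\ell}-f_{k+\ell})p_{k+\ell}+\dots=0$. For the $L_1$ bound, use the Hölder/Cauchy--Schwarz inequality for positive $f$:
\[
\|p_kfp_{k+\ell}\|_1\le\varphi(p_kfp_k)^{1/2}\varphi(p_{k+\ell}fp_{k+\ell})^{1/2}\le\tfrac12\bigl(\varphi(fp_k)+\varphi(fp_{k+\ell})\bigr).
\]
The same bound holds with $f_{k+\ell}$ in place of $f$, because $\varphi(p_kf_{k+\ell}p_k)=\varphi(p_kfp_k)$ by trace preservation, as $p_k\in\A_{k+\ell}$. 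Summing over $k$ and applying \eqref{eq:massbudget} gives $\sum_k\|b_{k,\ell}\|_1\lesssim\|f\|_1$, uniformly in $\ell$.

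\emph{Step 3: assertion (ii).} The membership $g_{k,\ell}\in\A_{k+\ell}$ is clear. Moreover $\E_{k+\ell-1}g_{k,\ell}=p_k\E_{k+\ell-1}(df_{k+\ell})q_{k+\ell-1}+\dots=0$. For the $L_2$ bound, write
\[
\|p_kdf_{k+\ell}q_{k+\ell-1}\|_2^2=\varphi\bigl(p_k\,df_{k+\ell}\,q_{k+\ell-1}\,df_{k+\ell}\,p_k\bigr).
\]
Split $df_{k+\ell}q_{k+\ell-1}=f_{k+\ell}q_{k+\ell-1}-f_{k+\ell-1}q_{k+\ell-1}$. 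Dyadic regularity gives $q_{m-1}f_mq_{m-1}\le2^d\alpha q_{m-1}$, and $q_{m-1}f_{m-1}q_{m-1}\le\alpha q_{m-1}$. Hence $\|q_{k+\ell-1}df_{k+\ell}q_{k+\ell-1}\|_\infty\lesssim\alpha$, and $f_m^{1/2}q_{m-1}$ has norm at most $(2^d\alpha)^{1/2}$. Bound $\varphi(p_kf_mq_{m-1}f_mp_k)\le\|q_{m-1}f_mq_{m-1}\|_\infty\,\varphi(p_kf_mp_k)$, using operator monotonicity in the form $f_m^{1/2}q_{m-1}f_m^{1/2}\le f_m$ and $q_{m-1}f_mq_{m-1}\lesssim\alpha$, via $aXa^*$ with $a=p_kf_m^{1/2}$. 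This gives $\lesssim\alpha\varphi(fp_k)$, and the $f_{m-1}$ term is treated likewise. Summing over $k$ with \eqref{eq:massbudget} yields $\sum_k\|g_{k,\ell}\|_2^2\lesssim_d\alpha\|f\|_1$.

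The step I expect to be the main obstacle is the exact telescoping in (iii).
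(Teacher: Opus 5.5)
Your proof is correct. It differs from the paper mainly because the paper gives no proof of this lemma. There, (i)--(ii) are quoted from Lai's Lemmas~4.2 and~3.7, which rest on Parcet's off-diagonal analysis, and (iii) is obtained by matching Remark~2.3 of Cadilhac--Conde-Alonso--Parcet with Sections~5.1 and~5.4 of Parcet.

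You instead derive everything directly from:
\begin{itemize}
\item $p_kf_kq_k=0$ and $q_{m-1}=q_m+p_m$;
\item the stabilization $q_m=q_M$ for $m\ge M$, which is valid since $f_{M+1}=f_M$ and $q_Mf_Mq_M\le\alpha q_M$;
\item H\"older's inequality and trace preservation;
\item the bound $q_{m-1}f_mq_{m-1}\le2^d\alpha q_{m-1}$, which follows from $f_m\le2^df_{m-1}$, or alternatively from $q_{m-1}f_mq_{m-1}=p_mf_mp_m+q_mf_mq_m$.
\end{itemize}

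Your telescoping for (iii) is exactly the algebra the paper carries out later in the proof of Lemma~\ref{lem:cumulative}. There the identity $\sum_{r=k+1}^{b}[p_k(f-f_r)p_r+p_kdf_rq_{r-1}]=\sum_{r=k+1}^{b}p_kfp_r+p_kf_bq_b$ appears, and taking $b=M$ gives precisely $p_kfq_k$. Likewise, your $L_2$ bound for the $g$-blocks uses the same mechanism as the bound $\|Pf_bq_b\|_2^2\le\alpha\varphi(fP)$ in that proof. Your route gives a self-contained argument and shows that (iii) is pure algebra. The citation route is shorter and makes the connection with Parcet's original good/bad splitting explicit.

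Two statements in your write-up should be corrected, although neither is actually used.

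First, ``we need $p_kfq=0$'' is false in the noncommutative setting. For $k<M$, the operator $p_kfq=p_kf_Mq_M$ is in general nonzero; it vanishes automatically only when $p_k$ and $q_M$ are indicators of disjoint sets. Your own telescoping shows that it equals $\sum_{\ell}(p_kdf_{k+\ell}q_{k+\ell-1}-p_kf_{k+\ell}p_{k+\ell})$, so it is exactly the term that feeds the $g$-blocks.

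Second, in (ii) the operative inequality is not $f_m^{1/2}q_{m-1}f_m^{1/2}\le f_m$. It is $f_m^{1/2}q_{m-1}f_m^{1/2}\le\|q_{m-1}f_mq_{m-1}\|_\infty\one$, which after conjugation by $a=p_kf_m^{1/2}$ gives the trace bound you state.

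In (i) you also need $\varphi(p_{k+\ell}f_{k+\ell}p_{k+\ell})=\varphi(fp_{k+\ell})$. This holds for the same reason as the $p_k$ identity you cite.
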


The estimates in Lemma~\ref{lem:lag-blocks}(i)--(ii) are stated in
\cite[Lemmas~4.2 and~3.7]{Lai}; their proofs are based on the
off-diagonal analysis in \cite{Parcet}; see also \cite{Cadilhac2018}
for the estimate of $b_{k,\ell}$.
In \cite{Parcet},  the family $b_{k, \ell}$ is used to decompose the off-diagonal bad function, whereas $g_{k, \ell}$ is used to decompose the off-diagonal good function; see
\cite[Sections~5.4 and~5.1]{Parcet}, respectively.

For assertion (iii), Remark~2.3 of \cite{CCP} identifies its new
off-diagonal bad term in the regular decomposition with the two
off-diagonal contributions in Parcet's decomposition. Combining this
with \cite[Sections~5.1 and~5.4]{Parcet} gives
\eqref{eq:lagresolution}. The   assumption \eqref{eq:finitemodel} ensures that only
finitely many terms in this identity are nonzero.

The resolution in Lemma~\ref{lem:lag-blocks}(iii) will be used in two
ways. For the terms involving $\Omega_{n+\ell}^\lo$, the $b$- and
$g$-blocks are estimated separately using (i) and (ii). For the terms
involving $\Omega_{n+\ell}^\hi$, they are kept together so that
Lemma~\ref{lem:cumulative} can be applied.

\section{Proof of the Main Theorem}\label{sec:plan}
	In this section, we prove Theorem~\ref{thm:main} assuming four
	auxiliary lemmas. The first is proved in
	Section~\ref{sec:diagproof}, while the remaining three are proved in
	Section~\ref{sec:offproof}.
	
	We first assume \eqref{eq:finitemodel} and establish the desired weak
	type estimate for the finite truncations
	\[
	T_{\Omega,I}f
	=
	\sum_{j\in I}K_j^\Omega*f,
	\]
	uniformly over finite sets $I\subset\Z$. The passage from
	$T_{\Omega,I}$ to $T_\Omega$ is carried out at the end of the section.
	
	The case $\Omega=0$ is immediate. Assume $\Omega\ne0$ and fix
	$\lambda>0$. Apply the regular noncommutative Calder\'on--Zygmund
	decomposition from Theorem~\ref{thm:regularcz} to $f$ at height
	$
	\alpha=\frac{\lambda}{C_{\Omega,1}},
	$
	and write
	\[
	f=g+\bd+\boff.
	\]
	By the distribution inequality,
	\[
	d_{7\lambda}(T_{\Omega,I}f)
	\leq
	d_\lambda(T_{\Omega,I}g)
	+d_{3\lambda}(T_{\Omega,I}\bd)
	+d_{3\lambda}(T_{\Omega,I}\boff).
	\]
Let $\zeta$ be the localization projection from
Lemma~\ref{lem:cz-localization}. For $h=\bd$ or $\boff$, set
	$H=T_{\Omega,I}h$. Since
	\[
	H=\zeta H\zeta+(1-\zeta)H+\zeta H(1-\zeta),
	\]
	the distribution inequality gives
	\begin{equation}\label{eq:bad-reduction}
		\begin{aligned}
			d_{3\lambda}(H)
			&\leq d_\lambda(\zeta H\zeta)
			+d_\lambda((1-\zeta)H)
			+d_\lambda(\zeta H(1-\zeta))\\
			&\leq d_\lambda(\zeta H\zeta)+2\varphi(1-\zeta).
		\end{aligned}
	\end{equation}
	Together with the estimate for $\varphi(1-\zeta)$ in
	\eqref{eq:localized-projection}, this reduces the proof to estimating
	$T_{\Omega,I}g$ and the two localized bad terms
	\[
	\zeta T_{\Omega,I}\bd\zeta,
	\qquad
	\zeta T_{\Omega,I}\boff\zeta.
	\]

	\subsection{The good function}
	Since $g\geq0$, functional calculus gives
	$g^2\leq\norm[\infty]{g}\,g$. Consequently, by
	Lemma~\ref{lem:fourier} and \eqref{eq:czbounds},
	\[
	\begin{aligned}
		\norm[2]{T_{\Omega,I}g}^2
		&\leq C_dC_{\Omega,1}^2\norm[2]{g}^2
		= C_dC_{\Omega,1}^2\varphi(g^2)\\
		&\leq C_dC_{\Omega,1}^2\norm[\infty]{g}\varphi(g)
		\leq C_dC_{\Omega,1}^2\alpha\norm[1]{f}\\
		&=C_dC_{\Omega,1}\lambda\norm[1]{f}.
	\end{aligned}
	\]
	Chebyshev's inequality therefore yields
	\begin{equation}\label{eq:goodweak}
		d_\lambda(T_{\Omega,I}g)
		\leq\lambda^{-2}\norm[2]{T_{\Omega,I}g}^2
		\leq C_d C_{\Omega,1}\lambda^{-1}\norm[1]{f}.
	\end{equation}
	
	\subsection{The diagonal bad function}
	Using \eqref{eq:badlocalvanishing}, we have
	\begin{equation}\label{eq:diaglocal}
		\begin{split}
			\zeta T_{\Omega,I}\bd\zeta
			&=\zeta\sum_{j\in I}\sum_kK_j^\Omega*b_{\mathrm d,k}\zeta\\
			&=\zeta\sum_{s\geq n_0}\sum_{j\in I}K_j^\Omega*b_{\mathrm d,s-j}\zeta\\
			&=\zeta\sum_{s\geq n_0}\sum_{j\in I}
			[\Gamma_j^{s,\Omega_s^\lo}+(K_j^{\Omega_s^\lo}-\Gamma_j^{s,\Omega_s^\lo})+K_j^{\Omega_s^\hi}]*b_{\mathrm d,s-j}\zeta,
		\end{split}
	\end{equation}
	where the second line follows by setting $s=j+k$, and the terms with
	$s<n_0$ vanish by \eqref{eq:badlocalvanishing},
	the third follows from
	\eqref{eq:amplitudekernels}.   We estimate the three sums in the last line separately.
	
	\subsubsection{The sum with \texorpdfstring{$\Gamma_j^{s,\Omega_s^\lo}$}{Gamma}}
	
	For this sum we need an $L_2$ estimate, uniform in the finite
	set $I$, which is stated below and   proved in Section \ref{sec:diagproof}.
	
	\begin{lemma}\label{lem:diagprincipal}
		For  every integer $s\geq n_0$ and $\Psi\in L_\infty(S^{d-1})$,
		\[
		\left\|\sum_{j\in I}\Gamma_j^{s,\Psi}*b_{\mathrm d,s-j}\right\|_2^2
		\leq C_d\norm[\infty]{\Psi}^2\alpha\norm[1]{f}\,2^{-s/2}.
		\]
	\end{lemma}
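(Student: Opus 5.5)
First apply Lemma~\ref{lem:overlap}. With $k=s-j$, set
\[
U_\nu=\sum_{j\in I}H_{j\nu}^{s,\Psi}*b_{\mathrm d,s-j},
\]
so that the sum in the statement equals $\sum_\nu P_\nu^sU_\nu$. The lemma gives
\[
\Bigl\|\sum_\nu P_\nu^sU_\nu\Bigr\|_2^2\lesssim_d2^{s(d-3/2)}\sum_\nu\|U_\nu\|_2^2 .
\]
The statement therefore follows from the single-sector bound
\begin{equation*}
\sum_\nu\Bigl\|\sum_{j\in I}H_{j\nu}^{s,\Psi}*b_{\mathrm d,s-j}\Bigr\|_2^2\lesssim_d\|\Psi\|_\infty^2\,2^{-s(d-1)}\alpha\|f\|_1 .
\end{equation*}
This is Seeger's estimate in \cite[Section~4]{Seeger}. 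To prove it I would expand the square and use $|H_{j\nu}^{s,\Psi}|\le\|\Psi\|_\infty\,\mathcal H_{j\nu}$, where $\mathcal H_{j\nu}=2^{-jd}\one_{\{2^{j-1}\le|x|\le2^{j+1},\ x/|x|\in E_\nu^s\}}$. This majorant is supported in a tube of length $2^j$ and width $2^{j-s}$ pointing in direction $e^s_\nu$.

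Write $b_{\mathrm d,k}=\sum_{Q\in\Q_k}b_Q$ with $b_Q=\one_Q\pi_Q(f-f_Q)\pi_Q$. The expanded square is
\[
\sum_{j,j'}\sum_{Q\in\Q_{s-j}}\sum_{Q'\in\Q_{s-j'}}\varphi\bigl((H_{j\nu}*b_Q)^*(H_{j'\nu}*b_{Q'})\bigr).
\]
The bad pieces are not positive, so I would not put absolute values on the $b_Q$ directly. Instead I would use the factorization $b_Q=\pi_Q(f-f_Q)\pi_Q$ and split it as $\pi_Qf\pi_Q-\pi_Qf_Q\pi_Q$. Both terms are positive, and each has $L_1$ norm at most $\varphi(f\pi_Q\one_Q)$, up to the constants from \eqref{eq:cubeproperties}. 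The second term is also bounded by $2^d\alpha\pi_Q$. After this split each trace is bounded by $\iint\mathcal H_{j\nu}(x-y)\mathcal H_{j'\nu}(x-y')$ integrated against $\tau$ applied to products of positive operators. To control these traces I would use the column H\"older inequality \eqref{eq:column22}, taking the $L_\infty$ factor from the $p_k f_k p_k\le2^d\alpha p_k$ bound.

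The geometric input is the same as Seeger's. For $j'\le j$, the convolution $\mathcal H_{j\nu}*\mathcal H_{j'\nu}^*$ has $L_\infty$ norm at most $2^{-jd}\cdot 2^{-j'd}\cdot|\text{tube}_{j'}|\approx 2^{-jd}2^{-s(d-1)}$. Fixing $y$ in a cube of scale $s-j'$, one sums over $j\ge j'$ the measures of the cubes $Q$ of scale $s-j$ meeting the appropriate translated tube. That measure is $\lesssim 2^{jd}2^{-s(d-1)}\cdot$(mass). Combining this with the $L_\infty$ bound $\alpha$ for the normalized pieces of the $b_Q$ and the $L_1$ budget $\sum_k\|b_{\mathrm d,k}\|_1\le2\|f\|_1$, I get a factor $2^{-s(d-1)}$ for each $\nu$.

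Summing over $\nu$ then seems to cost $2^{s(d-1)}$, which would be far too much. The fix is to assign each cube $Q$'s mass to one $\nu$ at a time. Since $\sum_\nu\mathcal H_{j\nu}\approx2^{-jd}\one_{\text{annulus}}$, summing over $\nu$ before using the tube bound for a fixed $Q'$ keeps the total at $\alpha\|f\|_1 2^{-s(d-1)}$ per $\nu$-averaged unit. The main obstacle is exactly this operator-valued bookkeeping: the diagonal in $(Q,Q')$, and the near-diagonal in $(j,j')$, must be controlled by the $L_\infty$ bound of the $\pi_Qf_Q\pi_Q$ pieces while the full $f$-pieces carry only $L_1$ mass. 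I would first try to handle this with \eqref{eq:column22} applied with $a=\pi_Q$-localized pieces, as in Lai's \cite{Lai} treatment of the $M_\Omega$ analogue.

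Combining the single-sector bound with the $2^{s(d-3/2)}$ overlap factor gives $2^{s(d-3/2)}\cdot2^{-s(d-1)}=2^{-s/2}$, which is the claimed bound.
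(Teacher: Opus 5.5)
Your outer reduction matches the paper: Lemma~\ref{lem:overlap}, the target $\sum_\nu\bigl\|\sum_{j\in I}H^{s,\Psi}_{j\nu}*b_{\mathrm d,s-j}\bigr\|_2^2\lesssim_d\|\Psi\|_\infty^2 2^{-s(d-1)}\alpha\|f\|_1$, the split of $b_{\mathrm d,s-j}$ into the positive pieces $u_j=p_{s-j}fp_{s-j}$ and $v_j=p_{s-j}f_{s-j}p_{s-j}$, and positivity of $\tau(u_i(y)u_j(x))$ after expanding the square. The gap is in the core estimate. First, your count is off by a factor $2^{s(d-1)}$. The interaction bound $2^{-jd}2^{-s(d-1)}$ times the tube measure $2^{jd}2^{-s(d-1)}$ gives $\alpha\|\Psi\|_\infty^2 2^{-2s(d-1)}\|f\|_1$ for a single sector, uniformly in $\nu$. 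Summing over the $\lesssim_d2^{s(d-1)}$ sectors is therefore harmless. There is no $\nu$-summation obstacle, and the ``assign each cube's mass to one $\nu$'' step is neither needed nor an actual argument.

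Second, and more seriously, you sum in the wrong order. Suppose you fix $y$ in a cube of the finer generation $s-j'$ and sum over the coarser scales $j\geq j'$. Then each $j$ can contribute $\approx\alpha\|\Psi\|_\infty^2 2^{-2s(d-1)}$, because the kernel weight $2^{-jd}2^{-s(d-1)}$ decreases while the relevant tube grows with $j$. No single packing absorbs the $j$-sum, so the bound grows with the number of scales in $I$.

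The missing idea is the reverse order, which is Lemma~\ref{lem:sector-packing}: fix $x$ and the coarser scale $j$, and sum over all finer scales $i\leq j$ at once. The bound \eqref{eq:interactionmajorant} depends only on $j$, and every cube of $\Q_{s-i}$ with $i\leq j$ has side at most $2^{j-s}$. So each such cube meeting $x-R^s_{j\nu}$ lies inside $x-C_dR^s_{j\nu}$. Consequently $p_{s-i}fp_{s-i}$ is integrated over whole cubes, giving $|Q|\pi_Qf_Q\pi_Q\leq2^d\alpha|Q|\pi_Q$. This averaging is the only source of $L_\infty$ control on the raw $f$-pieces, and no column H\"older inequality is needed. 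The pairwise orthogonality of the $p_k$ across generations then gives $\sum_Q|Q|\pi_Q\leq|x-C_dR^s_{j\nu}|\one_\M$ by Lemma~\ref{lem:projection-packing}. Hence the inner sum is $\lesssim_d\alpha\|\Psi\|_\infty^2 2^{-2s(d-1)}\one_\M$, uniformly in $x$, $j$ and $I$. Only after this is $u_j(x)$ traced in and summed over $j$ using the budget $\sum_k\varphi(fp_k)\leq\|f\|_1$. The commutative argument you cite uses the same order, with disjointness of the stopping cubes in place of the projection packing.
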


	Take
	$\Psi=\Omega_s^\lo$, use
	$\norm[\infty]{\Omega_s^\lo}\leq2^{\delta s}\norm[1]{\Omega}$,
	and apply the triangle inequality in $L_2(\A)$. We obtain
	\[
	\begin{aligned}
		\left\|\sum_{s\geq n_0}\sum_{j\in I}\Gamma_j^{s,\Omega_s^\lo}*b_{\mathrm d,s-j}\right\|_2
		&\leq\sum_{s\geq n_0}
		\left\|\sum_{j\in I}\Gamma_j^{s,\Omega_s^\lo}*b_{\mathrm d,s-j}\right\|_2\\
		&\leq C_d\norm[1]{\Omega}
		(\alpha\norm[1]{f})^{1/2}
		\sum_{s\geq n_0}2^{-s/4+\delta s}\\
	 		&\leq C_d\norm[1]{\Omega}(\alpha\norm[1]{f})^{1/2}.
	\end{aligned}
	\]
	Here we used $\delta=1/16$.
	
	\subsubsection{The sum with \texorpdfstring{$K_j^{\Omega_s^\lo}-\Gamma_j^{s,\Omega_s^\lo}$}{K minus Gamma}}
	
	For $Q\in\Q_{s-j}$, the function $b_Q$ has mean zero and is supported in
	a cube of side $2^{j-s}$. Lemma~\ref{lem:remainder} gives
	\[
	\norm[1]{(K_j^{\Omega_s^\lo}-\Gamma_j^{s,\Omega_s^\lo})*b_Q}
	\leq C_d\norm[1]{\Omega}\,2^{-3s/16}\norm[1]{b_Q}.
	\]
	Since cubes at one generation are disjoint,
	\[
	\sum_{j\in I}\sum_{Q\in\Q_{s-j}}\norm[1]{b_Q}
	=\sum_{j\in I}\norm[1]{b_{\mathrm d,s-j}}
	\leq\sum_{k\in\Z}\norm[1]{b_{\mathrm d,k}}
	\leq2\norm[1]{f}.
	\]
	It follows that
	\[
	\begin{aligned}
		&\left\|\sum_{s\geq n_0}\sum_{j\in I}
		(K_j^{\Omega_s^\lo}-\Gamma_j^{s,\Omega_s^\lo})*b_{\mathrm d,s-j}\right\|_1\\
		&\leq\sum_{s\geq n_0}\sum_{j\in I}
		\sum_{Q\in\Q_{s-j}}\norm[1]{(K_j^{\Omega_s^\lo}-\Gamma_j^{s,\Omega_s^\lo})*b_Q}\\
		&\leq C_d\norm[1]{\Omega}
		\sum_{s\geq n_0}2^{-3s/16}
		\sum_{j\in I}\sum_{Q\in\Q_{s-j}}\norm[1]{b_Q}\\
		&\leq C_d\norm[1]{\Omega}\norm[1]{f}.
	\end{aligned}
	\]
	
	\subsubsection{The sum with \texorpdfstring{$K_j^{\Omega_s^\hi}$}{K}}
	
	Young's inequality \eqref{eq-Young}, \eqref{eq:kernelL1} and Theorem~\ref{thm:regularcz} give
	\[
	\begin{aligned}
		\left\|\sum_{s\geq n_0}\sum_{j\in I}K_j^{\Omega_s^\hi}*b_{\mathrm d,s-j}\right\|_1
		&\leq\sum_{s\geq n_0}\sum_{j\in I}
		\norm[1]{K_j^{\Omega_s^\hi}}\norm[1]{b_{\mathrm d,s-j}}\\
		&\leq C_d\sum_{s\geq n_0}\norm[1]{\Omega_s^\hi}
		\sum_{j\in I}\norm[1]{b_{\mathrm d,s-j}}\\
		&\leq C_d\norm[1]{f}\sum_{s\geq n_0}
		\int_{\{|\Omega|>2^{\delta s}\norm[1]{\Omega}\}}
		|\Omega(\theta)|\dd\sigma(\theta)\\
		&\leq C_d C_{\Omega,1}\norm[1]{f},
	\end{aligned}
	\]
 In the last inequality, we use $
  \sum_{s\geq n_0}
  \int_{\{|\Omega|>2^{\delta s}\|\Omega\|_1\}}
  |\Omega(\theta)|\dd\sigma(\theta)
  \lesssim C_{\Omega,1}.
 $

	\subsubsection{Completion of the diagonal estimate}

	Using \eqref{eq:bad-reduction} and \eqref{eq:diaglocal}, we split $\zeta T_{\Omega,I}\bd\zeta$ into its three
	terms and apply the Chebyshev inequality with threshold
	$\lambda/3$ to each term.  We obtain
	\begin{equation}\label{eq:diagweak}
		\begin{split}
			d_{3\lambda}(T_{\Omega,I}\bd)
			&\leq2\varphi(1-\zeta)
			+\frac9{\lambda^2}
			\left\|\sum_{s\geq n_0}\sum_{j\in I}\Gamma_j^{s,\Omega_s^\lo}*b_{\mathrm d,s-j}\right\|_2^2\\
			&\quad+\frac3\lambda
			\left\|\sum_{s\geq n_0}\sum_{j\in I}(K_j^{\Omega_s^\lo}-\Gamma_j^{s,\Omega_s^\lo})*b_{\mathrm d,s-j}\right\|_1\\
			&\quad+\frac3\lambda
			\left\|\sum_{s\geq n_0}\sum_{j\in I}K_j^{\Omega_s^\hi}*b_{\mathrm d,s-j}\right\|_1\\
			&\leq C_d\left[\alpha^{-1}
			+\lambda^{-2}\norm[1]{\Omega}^2\alpha
			+\lambda^{-1}C_{\Omega,1}\right]\norm[1]{f}\\
			&\leq C_d C_{\Omega,1}\lambda^{-1}\norm[1]{f}.
		\end{split}
	\end{equation}

	\subsection{The off-diagonal bad function}
By \eqref{eq:badlocalvanishing} and setting $n=j+k$, we have
	\[
	\begin{aligned}
		\zeta T_{\Omega,I}\boff\zeta
		&=\zeta\sum_{n\geq n_0}\sum_{j\in I}
		K_j^\Omega*b_{\mathrm{off},n-j}\zeta\\
		&=\zeta\sum_{n\geq n_0}\sum_{\ell\geq1}\sum_{j\in I}
		K_j^\Omega*(b_{n-j,\ell}+g_{n-j,\ell})\zeta,
	\end{aligned}
	\]
	where the second equality follows from \eqref{eq:lagresolution}.
	Apply  \eqref{eq:amplitudekernels} with $s=n+\ell$.  On the $b$-block we use all three terms in
	\eqref{eq:amplitudekernels}. On the $g$-block we only use
	$K_j^\Omega=K_j^{\Omega_{n+\ell}^\lo}+K_j^{\Omega_{n+\ell}^\hi}$. Consequently,
	\begin{equation}\label{eq:offsplit}
		\begin{split}
			\zeta T_{\Omega,I}\boff\zeta
			&=\zeta\sum_{n\geq n_0}\sum_{\ell\geq1}\sum_{j\in I}
			K_j^{\Omega_{n+\ell}^\hi}*(b_{n-j,\ell}+g_{n-j,\ell})\zeta\\
			&\quad+\zeta\sum_{n\geq n_0}\sum_{\ell\geq1}\sum_{j\in I}
			K_j^{\Omega_{n+\ell}^\lo}*g_{n-j,\ell}\zeta\\
			&\quad+\zeta\sum_{n\geq n_0}\sum_{\ell\geq1}\sum_{j\in I}
			\Gamma_j^{n+\ell,\Omega_{n+\ell}^\lo}*b_{n-j,\ell}\zeta\\
			&\quad+\zeta\sum_{n\geq n_0}\sum_{\ell\geq1}\sum_{j\in I}
			(K_j^{\Omega_{n+\ell}^\lo}-\Gamma_j^{n+\ell,\Omega_{n+\ell}^\lo})*b_{n-j,\ell}\zeta.
		\end{split}
	\end{equation}
	Below we estimate these four terms separately.
	
\subsubsection{The sum with
	\texorpdfstring{$K_j^{\Omega_{n+\ell}^\hi}$}{K}}

For fixed $\theta$, the condition in the definition of
$\Omega_{n+\ell}^{\hi}$ depends on $n$ and $\ell$ only through
$n+\ell$. This leads to the following cumulative $L_1$ estimate,
proved in Section~\ref{sec:cumulativeproof}.

\begin{lemma}\label{lem:cumulative}
	For every integer $h\geq n_0$ and $j\in\Z$,
	\begin{equation}\label{eq:cumulativebounds}
		\left\|
		\sum_{\substack{n\geq n_0,\ \ell\geq1\\ n+\ell\leq h}}
		(b_{n-j,\ell}+g_{n-j,\ell})
		\right\|_1
		\leq
		4\sum_{k=n_0-j}^{h-j}\varphi(fp_k).
	\end{equation}
	Consequently,
	\begin{equation}\label{eq:cumulativeglobal}
		\sum_{j\in\Z}
		\left\|
		\sum_{\substack{n\geq n_0,\ \ell\geq1\\ n+\ell\leq h}}
		(b_{n-j,\ell}+g_{n-j,\ell})
		\right\|_1
		\leq
		4(h-n_0+1)\norm[1]{f}.
	\end{equation}
\end{lemma}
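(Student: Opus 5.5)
The plan is to telescope in $\ell$ for fixed $k$, which turns the partial sum of the blocks into a few explicit expressions in $f$, $f_K$ and the Cuculescu projections, and then to bound each expression by $\varphi(fp_k)$.

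\textbf{Reindexing.} Put $k=n-j$ and $K=h-j$. The constraints $n\geq n_0$, $\ell\geq1$, $n+\ell\leq h$ become $n_0-j\leq k\leq K-1$ and $1\leq\ell\leq K-k$. So the sum in \eqref{eq:cumulativebounds} equals
\[
\sum_{k=n_0-j}^{K-1}S_k,
\qquad
S_k=\sum_{\ell=1}^{K-k}(b_{k,\ell}+g_{k,\ell}).
\]
The key observation is that every inner sum stops at the same level $k+\ell=K$. By the triangle inequality it is enough to show $\norm[1]{S_k}\leq4\varphi(fp_k)$ for each $k$.

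\textbf{Telescoping identity.} Using $p_kf_kq_k=0$ from Lemma~\ref{lem:pk-properties}(ii), $q_{m-1}-q_m=p_m$ and $f_m-f_{m-1}=df_m$,
\[
p_kf_Kq_K=\sum_{m=k+1}^{K}\bigl(p_kf_mq_m-p_kf_{m-1}q_{m-1}\bigr)
=\sum_{m=k+1}^{K}\bigl(p_k\,df_m\,q_{m-1}-p_kf_mp_m\bigr).
\]
Substituting this into \eqref{eq-bkl-gkl} gives
\[
S_k=p_kf(q_k-q_K)+p_kf_Kq_K+(q_k-q_K)fp_k+q_Kf_Kp_k .
\]
This identity makes visible the feature stressed before the lemma: keeping $b_{k,\ell}$ and $g_{k,\ell}$ together leaves no sum over $\ell$. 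Each $S_k$ contains only four terms, whatever the number $K-k$ of scale levels.

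\textbf{Estimating the four terms.} The second and fourth terms are adjoints of the first and third, so it suffices to bound $p_kf(q_k-q_K)$ and $p_kf_Kq_K$, each by $\varphi(fp_k)$. First rewrite the mixed term using $q_k-q_K=\sum_{m=k+1}^Kp_m$:
\[
p_kf(q_k-q_K)=p_k(f-f_K)(q_k-q_K)+p_kf_K(q_k-q_K).
\]
Note that $p_k\in\A_k\subset\A_K$ and $q_k,q_K\in\A_K$. For terms of the form $p_kf_Kr$ with $r\in\A_K$, I would use $f_K=\E_Kf$ and the bimodule property to write $p_kf_Kr=\E_K(p_kfr)$. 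For $p_kf_Kq_K$ this is the point where the $2^d$-regularity of Cuculescu's construction should enter, namely $q_Kf_Kq_K\leq\alpha q_K$ from Lemma~\ref{lem:cuculescu}(i) and $\alpha\varphi(p_k)\leq\varphi(fp_k)$ from \eqref{eq:massbudget}. I would then apply Cauchy--Schwarz in the form
\[
\norm[1]{p_kf_K^{1/2}\,f_K^{1/2}q_K}\leq\varphi(p_kf_K)^{1/2}\varphi(q_Kf_Kq_K)^{1/2},
\qquad
\varphi(p_kf_K)=\varphi(p_kf).
\]

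\textbf{Main obstacle.} The naive Cauchy--Schwarz bound gives only $\varphi(fp_k)^{1/2}$ times a quantity involving $q_K$, such as $\varphi(q_Kf_Kq_K)^{1/2}$. That second factor is not controlled by $\varphi(fp_k)$, and summing such bounds over $k$ would double count. The real work is to localize the second factor to the support of $p_k$. The tool I would use is the observation that $p_k=\sum_{Q\in\Q_k}\pi_Q\one_Q$ lives on the cubes of generation $k$, so that $\one_Q$-localization together with $\pi_Qf_Q\pi_Q\leq2^d\alpha\pi_Q$ from \eqref{eq:cubeproperties} should replace $\varphi(q_Kf_Kq_K)$ by a multiple of $\alpha\varphi(p_k)\leq\varphi(fp_k)$. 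Once $\norm[1]{S_k}\leq4\varphi(fp_k)$ is in hand, \eqref{eq:cumulativebounds} follows by summing over $k$.

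\textbf{Global bound.} For \eqref{eq:cumulativeglobal}, sum \eqref{eq:cumulativebounds} over $j$ and interchange the sums. Each index $k$ arises from at most $h-n_0+1$ values of $j$, so by \eqref{eq:massbudget}
\[
\sum_{j\in\Z}\,4\sum_{k=n_0-j}^{h-j}\varphi(fp_k)\leq4(h-n_0+1)\sum_{k\in\Z}\varphi(fp_k)\leq4(h-n_0+1)\norm[1]{f}.
\]
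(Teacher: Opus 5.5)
Your reindexing, your telescoped formula for $S_k$, and the final count over $j$ are all correct and agree with the paper. The gap is the reduction ``it is enough to show $\|S_k\|_1\leq4\varphi(fp_k)$ for each $k$.'' That inequality is false, so no localization through $\pi_Qf_Q\pi_Q\leq2^d\alpha\pi_Q$ can prove it. A quick sanity check: for $K\geq M$ your identity gives exactly $S_k=b_{\mathrm{off},k}$. A per-$k$ bound would therefore make $\boff$ as easy as $\bd$, and the whole $b_{k,\ell}/g_{k,\ell}$ machinery would be unnecessary.

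Here is an explicit counterexample. Take $\M=M_N$ with the usual trace, $N-1=2^{dm-1}$, $\alpha<a\leq2\alpha$, and $Q=[0,1)^d\in\Q_0$. Pair the $2^{dm}$ subcubes of $Q$ in $\Q_m$ as $Q'_{i,\pm}$, $2\leq i\leq N$. Put $f=a\,e_1e_1^*+(N-1)\alpha\,e_ie_i^*\pm c\,(e_1e_i^*+e_ie_1^*)$ on $Q'_{i,\pm}$ and $f=0$ off $Q$, where $c=\sqrt{a(N-1)\alpha}$. Then:
\begin{itemize}
\item $f\geq0$ and $f$ satisfies \eqref{eq:finitemodel};
\item $f_Q=\mathrm{diag}(a,\alpha,\dots,\alpha)$;
\item $q_k=1$ for $k<0$, while $p_0=e_1e_1^*\one_Q$ and $q_0=1-p_0$;
\item $\varphi(fp_0)=a$.
\end{itemize}
For $K\geq m$ we have $f_K=f$, so $S_0=p_0fq_0+q_0fp_0$ and
\[
\|S_0\|_1\geq\|p_0S_0q_0\|_1=\|p_0fq_0\|_1=c\geq a\sqrt{(N-1)/2},
\]
which is unbounded relative to $\varphi(fp_0)$. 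The mass that pays for $S_0$ sits in the finer projections $p_r$, $r>0$.

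The fallback estimate $\|p_kf(q_k-q_K)\|_1\leq\varphi(fp_k)^{1/2}\bigl(\sum_{r=k+1}^{K}\varphi(fp_r)\bigr)^{1/2}$ does not help either. After summing in $k$ it loses the square root of the number of levels, which destroys the linear growth in $h$ that produces $C_{\Omega,1}$.

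The missing idea is not to use the triangle inequality in $k$ at all. Sum your identity over the whole window $a=n_0-j\leq k\leq b=h-j$ and set $P=\sum_{k=a}^bp_k$. Since the $p_k$ are orthogonal and $p_bf_bq_b=0$,
\[
\sum_{k=a}^{b-1}S_k=\Bigl(PfP-\sum_{k=a}^{b}p_kfp_k\Bigr)+Pf_bq_b+q_bf_bP .
\]
The two operators in the bracket are positive. Their $L_1$ norms are therefore traces, each equal to $\Lambda=\sum_{k=a}^b\varphi(fp_k)$. This global positivity is what absorbs the cross terms $p_kfp_r$, which cannot be controlled one $k$ at a time.

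For the boundary term, H\"older gives
\[
\|Pf_bq_b\|_1\leq\|P\|_2\|Pf_bq_b\|_2\leq(\alpha^{-1}\Lambda)^{1/2}(\alpha\Lambda)^{1/2}=\Lambda .
\]
Here $\varphi(P)\leq\alpha^{-1}\Lambda$, and $Pf_bq_bf_bP\leq\alpha Pf_bP$ because $\|f_b^{1/2}q_bf_b^{1/2}\|_\infty=\|q_bf_bq_b\|_\infty\leq\alpha$.

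The term you flagged as the main obstacle, $p_kf_Kq_K$, is in fact harmless even for fixed $k$. The same H\"older argument with $P$ replaced by $p_k$ gives $\|p_kf_Kq_K\|_1\leq\varphi(fp_k)$. Your split $\varphi(p_kf_K)^{1/2}\varphi(q_Kf_Kq_K)^{1/2}$ is simply the wrong pairing. The genuine difficulty is the mixed term $p_kf(q_k-q_K)=\sum_{r>k}p_kfp_r$, and only the summed positivity identity above handles it.
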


Using the definition of $K_j^{\Omega_{n+\ell}^\hi}$ and polar
coordinates, we obtain
\begin{equation}\label{eq:offhirearrange}
	\begin{split}
		&\sum_{n\geq n_0}\sum_{\ell\geq1}\sum_{j\in I}
		K_j^{\Omega_{n+\ell}^\hi}
		*(b_{n-j,\ell}+g_{n-j,\ell})(x)\\
		&\quad=
		\int_{S^{d-1}}\Omega(\theta)
		\sum_{j\in I}\int_0^\infty
		\beta(2^{-j}r)^2\\
		&\qquad\qquad\times
		\sum_{\substack{n\geq n_0,\ \ell\geq1\\
				2^{\delta(n+\ell)}\norm[1]{\Omega}
				<|\Omega(\theta)|}}
		(b_{n-j,\ell}+g_{n-j,\ell})(x-r\theta)
		\frac{\dd r}{r}\dd\sigma(\theta).
	\end{split}
\end{equation}

Fix $\theta$ for which the index set of the inner sum is nonempty,
and set
\[
h(\theta)
=
\left\lceil
\delta^{-1}\log_2
\frac{|\Omega(\theta)|}{\norm[1]{\Omega}}
\right\rceil-1.
\]
Then $h(\theta)\geq n_0+1$, and, since $n+\ell$ is an integer,
\[
2^{\delta(n+\ell)}\norm[1]{\Omega}<|\Omega(\theta)|
\quad\Longleftrightarrow\quad
n+\ell\leq h(\theta).
\]
Hence, by \eqref{eq:cumulativeglobal},
\begin{equation}\label{eq:angularcumulative}
	\begin{aligned}
		&\sum_{j\in I}
		\left\|
		\sum_{\substack{n\geq n_0,\ \ell\geq1\\
				2^{\delta(n+\ell)}\norm[1]{\Omega}
				<|\Omega(\theta)|}}
		(b_{n-j,\ell}+g_{n-j,\ell})
		\right\|_1\\
		&\qquad\leq
		4\bigl(h(\theta)-n_0+1\bigr)\norm[1]{f}\\
		&\qquad\lesssim
		\left(
		1+\log^+
		\frac{|\Omega(\theta)|}{\norm[1]{\Omega}}
		\right)\norm[1]{f}.
	\end{aligned}
\end{equation}
Here we used $\delta=1/16$. The final bound also holds when the
index set is empty, since the corresponding sum is zero.

Finally, Minkowski's inequality, \eqref{eq:offhirearrange}, and
\eqref{eq:angularcumulative} give
\begin{equation}\label{eq:offhi}
	\begin{aligned}
		&\left\|
		\sum_{n\geq n_0}\sum_{\ell\geq1}\sum_{j\in I}
		K_j^{\Omega_{n+\ell}^\hi}
		*(b_{n-j,\ell}+g_{n-j,\ell})
		\right\|_1\\
		&\quad\lesssim
		\int_{S^{d-1}}|\Omega(\theta)|
		\sum_{j\in I}
		\left\|
		\sum_{\substack{n\geq n_0,\ \ell\geq1\\
				2^{\delta(n+\ell)}\norm[1]{\Omega}
				<|\Omega(\theta)|}}
		(b_{n-j,\ell}+g_{n-j,\ell})
		\right\|_1
		\dd\sigma(\theta)\\
		&\quad\lesssim
		\norm[1]{f}
		\int_{S^{d-1}}|\Omega(\theta)|
		\left(
		1+\log^+
		\frac{|\Omega(\theta)|}{\norm[1]{\Omega}}
		\right)\dd\sigma(\theta)\\
		&\quad =
		 C_{\Omega,1}\norm[1]{f}.
	\end{aligned}
\end{equation}

	\subsubsection{The sum with
		\texorpdfstring{$K_j^{\Omega_{n+\ell}^\lo}*g_{n-j,\ell}$}{K times g}}
	
	Fix  integers $n\geq n_0$ and $\ell\geq1$, and set $s=n+\ell$.
	By Lemma~\ref{lem:lag-blocks}(ii),
	$\E_{s-j-1}g_{n-j,\ell}=0$. We therefore need an estimate that
	combines this martingale cancellation with the Fourier decay of
	$K_j^\Psi$. The required estimate is the following; its proof is
	given in Section~\ref{sec:martingaleproof}.
	
	\begin{lemma}\label{lem:martingale}
		Let $s\geq1$ be an integer and let $(h_j)_{j\in I}\subset L_2(\A)$ satisfy
		$\E_{s-j-1}h_j=0$. For $\Psi\in L_\infty(S^{d-1})$,
		\begin{equation}\label{eq:martingalesynthesis}
			\left\|\sum_{j\in I}K_j^\Psi*h_j\right\|_2
			\leq C_d\norm[\infty]{\Psi}(1+s)2^{-s}
			\left(\sum_{j\in I}\norm[2]{h_j}^2\right)^{1/2}.
		\end{equation}
	\end{lemma}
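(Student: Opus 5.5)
The plan is to prove \eqref{eq:martingalesynthesis} on the Fourier side. The decay $\min\{1,(2^j|\xi|)^{-1}\}$ of $\widehat{K_j^\Psi}$ from \eqref{eq:boundedmultiplier} is combined with a low-frequency vanishing estimate for $\widehat{h_j}$. The two are glued by a weighted Cauchy--Schwarz inequality in $j$, with weights chosen so that they sum to $(1+s)2^{-s}$. The factor $(1+s)$ comes from counting the scales in the intermediate range.

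\emph{Step 1 (low-frequency estimate).} The hypothesis $\E_{s-j-1}h_j=0$ means that $h_j$ has mean zero on every dyadic cube $Q$ of side $L_j=2^{j+1-s}$. I will show that for $R>0$,
\[
\int_{|\xi|\leq R}\|\widehat{h_j}(\xi)\|_{L_2(\M)}^2\dd\xi\lesssim_d (RL_j)^2\|h_j\|_2^2 .
\]
To do this, fix a Schwartz function $\chi$ with $|\widehat\chi|\geq1$ on the unit ball and set $\chi_R=R^d\chi(R\,\cdot)$. By Plancherel (Lemma~\ref{lem:basic-integration}(iv)) it suffices to bound $\|\chi_R*h_j\|_2$. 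Using the cancellation on each cube, write
\[
\chi_R*h_j(x)=\sum_Q\int_Q\bigl(\chi_R(x-y)-\chi_R(x-c_Q)\bigr)h_j(y)\dd y .
\]
This is an integral operator with scalar kernel $k(x,y)=\chi_R(x-y)-\chi_R(x-c_{Q(y)})$, and both $\sup_y\int|k|\dd x$ and $\sup_x\int|k|\dd y$ are $\lesssim RL_j$ by the mean value theorem and the decay of $\nabla\chi$. Schur's test applies verbatim to $L_2(\R^d;L_2(\M))$: estimate $\|\int k(x,y)h(y)\dd y\|_{L_2(\M)}\leq\int|k|\,\|h(y)\|_{L_2(\M)}\dd y$ and then use the scalar Schur bound. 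Only crude bounds are needed for the case $RL_j\gtrsim1$, where Young's inequality suffices.

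\emph{Step 2 (weighted Cauchy--Schwarz).} Put $t_j=2^j|\xi|$, $m_j(\xi)=\min\{1,t_j^{-1}\}$, $e_j(\xi)=\min\{1,t_j2^{-s}\}$ and $a_j=m_je_j$. By Plancherel and Cauchy--Schwarz in $j$, applied pointwise in $\xi$ in $L_2(\M)$,
\[
\Bigl\|\sum_j\widehat{K_j^\Psi}\widehat{h_j}(\xi)\Bigr\|^2\leq\Bigl(\sum_ja_j(\xi)\Bigr)\sum_j\frac{|\widehat{K_j^\Psi}(\xi)|^2}{a_j(\xi)}\|\widehat{h_j}(\xi)\|^2 .
\]
Summing over $j$ with $t_j$ ranging over a geometric sequence gives three contributions:
\begin{itemize}[leftmargin=2.2em]
\item $t\le1$ contributes $\sum t2^{-s}\lesssim2^{-s}$;
\item $1\le t\le2^s$ contributes about $s$ terms, each equal to $2^{-s}$;
\item $t\ge2^s$ contributes $\sum t^{-1}\lesssim2^{-s}$.
\end{itemize}
Hence $\sup_\xi\sum_ja_j\lesssim(1+s)2^{-s}$. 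By \eqref{eq:boundedmultiplier}, $|\widehat{K_j^\Psi}|^2/a_j\lesssim\|\Psi\|_\infty^2\,m_j/e_j$.

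\emph{Step 3 (the remaining weighted integral).} It remains to show
\[
\int\frac{m_j}{e_j}\|\widehat{h_j}\|^2\dd\xi\lesssim(1+s)2^{-s}\|h_j\|_2^2 ,
\]
uniformly in $j$. Summing over $j$ then yields the square of the claimed constant, which is \eqref{eq:martingalesynthesis}.
\begin{itemize}[leftmargin=2.2em]
\item On $|\xi|\geq2^{s-j}$ the weight equals $m_j\leq2^{-s}$, and Plancherel finishes this region.
\item On the shells $|\xi|\sim2^{\mu}<2^{s-j}$ the weight is about $2^{s-j-\mu}\min\{1,2^{-j-\mu}\}$. Step 1 with $R=2^{\mu+1}$ bounds the shell mass by $2^{2(j+\mu-s)}\|h_j\|_2^2$, so the product is $2^{j+\mu-s}\min\{1,2^{-j-\mu}\}$.
\item Summing over $\mu$ gives a geometric series $\lesssim2^{-s}$ where $2^{j+\mu}\leq1$, and at most $s$ terms of size $2^{-s}$ where $1\leq2^{j+\mu}\leq2^s$.
\end{itemize}

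The main obstacle is Step 1 in the operator-valued setting: we need an $L_2$ (not pointwise) low-frequency estimate for a function that is only known to have mean zero on cubes. I expect the Schur-test argument above to work, since all kernels are scalar and the $L_2(\M)$-norm is handled by Minkowski's inequality. The only bookkeeping needed elsewhere is checking that the weights $a_j$ are summable with exactly the $(1+s)2^{-s}$ loss.
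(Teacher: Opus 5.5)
Your proposal is correct and follows essentially the paper's route: the paper also combines the decay \eqref{eq:boundedmultiplier} with a low-frequency bound $\|\Delta_t h_j\|_2\lesssim\min\{1,2^{j+t-s}\}\|h_j\|_2$, and then runs a Cauchy--Schwarz/Schur argument with weights $w_s(j+t)=\min\{1,2^{-j-t}\}\min\{1,2^{j+t-s}\}$ that sum to $(1+s)2^{-s}$. The paper organizes this over Littlewood--Paley pieces $\Delta_t$ rather than pointwise in $\xi$, and it cites the low-frequency estimate from Lai's Lemma~3.10, whereas you prove it directly with your mean-value/Schur-test argument.
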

	Apply Lemma~\ref{lem:martingale} with
	$h_j=g_{n-j,\ell}$ and $\Psi=\Omega_s^\lo$.
	By Lemma~\ref{lem:lag-blocks}(ii) and
	$\|\Omega_s^\lo\|_\infty\leq2^{\delta s}\|\Omega\|_1$,
	we obtain
  	\[
	\left\|\sum_{j\in I}
	K_j^{\Omega_{n+\ell}^\lo}*g_{n-j,\ell}\right\|_2
	\leq
	C_d\norm[1]{\Omega}(1+n+\ell)
	2^{-(1-\delta)(n+\ell)}
	(\alpha\norm[1]{f})^{1/2}.
	\]
	Therefore, by the triangle inequality,
	\begin{equation}\label{eq:glosum}
		\begin{split}
			&\left\|\sum_{n\geq n_0}\sum_{\ell\geq1}\sum_{j\in I}
			K_j^{\Omega_{n+\ell}^\lo}*g_{n-j,\ell}\right\|_2\\
			&\quad\leq
			C_d\norm[1]{\Omega}(\alpha\norm[1]{f})^{1/2}
			\sum_{n\geq n_0}\sum_{\ell\geq1}
			(1+n+\ell)2^{-15(n+\ell)/16}\\
			&\quad\leq
			C_d\norm[1]{\Omega}(\alpha\norm[1]{f})^{1/2}.
		\end{split}
	\end{equation}

	\subsubsection {The sums with \texorpdfstring{$b_{n-j,\ell}$}{b}}
	It remains to estimate the two sums involving $b_{n-j,\ell}$ in
	\eqref{eq:offsplit}. The term with
	$K_j^{\Omega_{n+\ell}^\lo}
	-\Gamma_j^{n+\ell,\Omega_{n+\ell}^\lo}$
	is controlled by Lemma~\ref{lem:remainder}. For the term with
	$\Gamma_j^{n+\ell,\Omega_{n+\ell}^\lo}$, we need the following
	$L_2$ estimate. Its main difficulty is the control of the terms with
	$i\ne j$ arising from the expansion of the squared $L_2$ norm. In
	the randomized setting of \cite[Lemma~3.4]{Lai}, orthogonality in
	the auxiliary probability variable eliminates these cross terms. The proof is given in Section \ref{sec:twosided}.
	
	\begin{lemma}\label{lem:offprincipal}
	 For $n\geq n_0$, $\ell\geq1$, $s=n+\ell$, and $\Psi\in L_\infty(S^{d-1})$,
		\begin{equation}\label{eq:offprincipal}
			\left\|\sum_{j\in I}\Gamma_j^{s,\Psi}*b_{n-j,\ell}\right\|_2^2
			\leq C_d\norm[\infty]{\Psi}^2(1+\ell)\alpha\norm[1]{f}\,2^{-s/2}.
		\end{equation}
	\end{lemma}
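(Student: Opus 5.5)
We expand the square and split the cross terms by the distance $|i-j|$ between the kernel scales. Write $B_j=b_{n-j,\ell}$, $k=n-j$, so that $B_j$ lives at scale $2^{-k}$ with $\E_{k+\ell}B_j=0$ and $B_j=p_k(\cdot)p_{k+\ell}+p_{k+\ell}(\cdot)p_k$. Then
\[
\Bigl\|\sum_{j\in I}\Gamma_j^{s,\Psi}*B_j\Bigr\|_2^2
=\sum_{|i-j|\leq \ell+C}\langle \Gamma_i^{s,\Psi}*B_i,\Gamma_j^{s,\Psi}*B_j\rangle
+\sum_{|i-j|>\ell+C}\langle \Gamma_i^{s,\Psi}*B_i,\Gamma_j^{s,\Psi}*B_j\rangle .
\]

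\textbf{Near scales.} By Cauchy--Schwarz, the near-diagonal part is at most $C(1+\ell)\sum_j\|\Gamma_j^{s,\Psi}*B_j\|_2^2$. For a single scale we use the Seeger $L_2$ bound. By Lemma~\ref{lem:overlap},
\[
\|\Gamma_j^{s,\Psi}*B_j\|_2^2\lesssim 2^{s(d-3/2)}\sum_\nu\|H_{j\nu}^{s,\Psi}*B_j\|_2^2 .
\]
Each $H_{j\nu}^{s,\Psi}$ is supported in a tube of dimensions $2^j\times(2^{j-s})^{d-1}$ with sup norm $\|\Psi\|_\infty 2^{-jd}$. Write $\|H*B\|_2^2=\langle \widetilde H*H*B,B\rangle$. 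Since $\widetilde H_{j\nu}*H_{j\nu}$ has $L_\infty$ norm $\lesssim\|\Psi\|_\infty^2 2^{-jd}2^{-s(d-1)}$ and is supported in a doubled tube, summing over $\nu$ yields an averaging operator over scale-$2^{j-s}$-thick tubes with total mass about $\|\Psi\|_\infty^2 2^{-s(d-1)}$.

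To pass from $L_2$ to the $L_1$--$L_\infty$ structure, we use the column H\"older inequality \eqref{eq:column22}. This requires controlling $\|\sum_Q \pi_Q f_Q\pi_Q\one_Q\|_\infty\lesssim 2^d\alpha$ via \eqref{eq:cubeproperties}, applied to the $p_{k+\ell}$ side of $B_j$ (which sits at the finer scale $2^{-k-\ell}$, comparable to $2^{j-s}$). The $p_k$ factor then contributes $\varphi(fp_k)$. The expected single-scale bound is
\[
\|\Gamma_j^{s,\Psi}*B_j\|_2^2\lesssim\|\Psi\|_\infty^2 2^{-s/2}\alpha\,\bigl(\varphi(fp_k)+\varphi(fp_{k+\ell})\bigr).
\]
Summing over $j$ via \eqref{eq:massbudget} gives $(1+\ell)\|\Psi\|_\infty^2\alpha\|f\|_1 2^{-s/2}$.

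\textbf{Far scales.} For $|i-j|>\ell+C$ with, say, $i>j$, the function $B_j$ is $\A_{k_j+\ell}$-measurable up to its mean-zero part, while $B_i$ has $\E_{k_i+\ell}B_i=0$ with $k_i+\ell<k_j$. We move $\Gamma_i^{*}\Gamma_j$ onto one side, in the $TT^*$ manner. The composed kernel $\widetilde\Gamma_i^{s,\Psi}*\Gamma_j^{s,\Psi}$ is smooth at scale $2^{j}$ in the directions relevant after the microlocal cutoffs, with Fourier decay of order $2^{-(i-j)/2}$ coming from \eqref{eq:boundedmultiplier}-type bounds and the angular localization. Pairing it against the cancellation of $B_i$ at scale $2^{-(k_i+\ell)}$ then gives a geometric gain of $2^{-c(|i-j|-\ell)}$. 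Combined with the same column-H\"older argument, which uses the Cuculescu bounds $p_k f_k p_k\le2^d\alpha p_k$ and $p_kf_kq_k=0$ to produce $\alpha\varphi(fp_k)^{1/2}\varphi(fp_{k'})^{1/2}$-type quantities, Schur's test sums the far terms to $\|\Psi\|_\infty^2\alpha\|f\|_1 2^{-s/2}$.

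\textbf{Main obstacle.} The hardest step is the far-interaction estimate. There we must simultaneously exploit the regularity of $\Gamma_i^*\Gamma_j$ and the noncommutative structure: in the operator-valued setting one cannot bound $|B_j|$ pointwise by $\alpha$, so the $L_\infty$ side must be produced through the projections. We would first try to write $B_j=p_k(f-f_{k+\ell})p_{k+\ell}+{\rm adj}$ and apply Lemma~\ref{lem:column-holder}, with $a$ carrying $f^{1/2}p_{k+\ell}$-type factors bounded via \eqref{eq:cubeproperties} and $b$ carrying $p_k$ factors controlled by $\varphi(fp_k)$.
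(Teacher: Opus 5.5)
\textbf{Gap: the far-scale step.} Your single-scale and near-scale steps are sound. The column H\"older estimate puts the $L_\infty$ factor on the fine projection $p_{s-j}$, whose cubes have side exactly $2^{j-s}$, and the mass $\varphi(fp_{n-j})$ on the coarse one; this is what the paper does. Cauchy--Schwarz over $|i-j|\lesssim\ell$ then gives the factor $1+\ell$.

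The far-scale step does not work. The geometric gain $2^{-c(|i-j|-\ell)}$ that your Schur test needs cannot be obtained by the mechanism you describe.
\begin{itemize}
\item A Fourier/Plancherel version needs $L_2$ control of $B_i$. Lemma~\ref{lem:lag-blocks} only gives $\sum_k\|b_{k,\ell}\|_1\lesssim\|f\|_1$. The $L_2$ bound holds for $g_{k,\ell}$, which is why Lemma~\ref{lem:martingale} is applied only to the $g$-blocks.
\item \eqref{eq:boundedmultiplier} is a single-scale bound with no dependence on $i-j$. Also $\Psi=\Omega_s^{\lo}$ has no mean zero, so $\widehat{K_i^\Psi}$ has no low-frequency decay.
\item In physical space, $\Gamma_j^{s,\Psi}$ is by construction the piece of $K_j^\Psi$ against which cancellation at scale $2^{j-s}$ cannot be exploited. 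The exploitable piece is $K_j^\Psi-\Gamma_j^{s,\Psi}$, handled by Lemma~\ref{lem:remainder}. The pieces $H_{i\nu}^{s,\Psi}$ live on tubes of transverse width $2^{i-s}$, equal to the side of the cubes in $\Q_{s-i}$ on which $B_i$ has mean zero. The angular factor is merely bounded, and $P_\nu^s$ is a degree-zero multiplier that adds no smoothness. So nothing makes $\widetilde\Gamma_j^{s,\Psi}*\Gamma_i^{s,\Psi}$ nearly constant on those cubes.
\end{itemize}
Without termwise decay, bounding the far pairs one at a time loses a factor $\#I$, which is not uniform in $I$. Your closing sentence about $B_j$ being ``$\A_{k_j+\ell}$-measurable up to its mean-zero part'' also does not parse: $\E_{k_j+\ell}B_j=0$.

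\textbf{Missing idea.} You need to bound the whole sum over $i$ collectively, using the mutual orthogonality of the Cuculescu projections across generations. This uses no cancellation of $b_{n-j,\ell}$ at all.
\begin{itemize}
\item Apply Lemma~\ref{lem:overlap} to $\sum_\nu P_\nu^s\bigl(\sum_jH_{j\nu}^{s,\Psi}*b_{n-j,\ell}\bigr)$ \emph{before} expanding the square. Then cross terms involve a single sector $\nu$. At the level of $\langle\Gamma_i*B_i,\Gamma_j*B_j\rangle$ the sectors mix and $P_\nu^s$ is nonlocal, so no usable majorant is available.
\item By \eqref{eq:interactionmajorant}, $|\widetilde H_{j\nu}^{s,\Psi}*H_{i\nu}^{s,\Psi}|\lesssim\|\Psi\|_\infty^22^{-jd}2^{-s(d-1)}\one_{R_{j\nu}^s}$ uniformly in $i\le j$.
\item For $i\le j-\ell-1$, the cubes of generations $n-i$ and $s-i$ have side at most $2^{j-s}$ and lie in $x-C_dR_{j\nu}^s$.
\item Hence \eqref{eq:cubeproperties} and Lemma~\ref{lem:projection-packing} give the operator inequality of Lemma~\ref{lem:sector-packing} for $r\in\{n,s\}$: $\sum_{i}\int|\widetilde H_{j\nu}^{s,\Psi}*H_{i\nu}^{s,\Psi}|(x-y)\,p_{r-i}(y)f(y)p_{r-i}(y)\,dy\lesssim\alpha\|\Psi\|_\infty^22^{-2s(d-1)}\one_\M$.
\item A Cauchy--Schwarz inequality in the trace bounds the mixed quantity $\tau\bigl(p_{n-i}f p_{s-i}\,p_{s-j}f p_{n-j}\bigr)$ by the geometric mean of the two positive expressions with $r=n$ and $r=s$. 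These sum to $\alpha\|\Psi\|_\infty^22^{-2s(d-1)}\|f\|_1$.
\end{itemize}
The column H\"older inequality in your last paragraph cannot replace this step. Applied pair by pair, it again gives no summability in $i$; the sum over $i$ must be taken inside the operator inequality above.
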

	
	Taking $\Psi=\Omega_{n+\ell}^\lo$ in Lemma~\ref{lem:offprincipal}, we obtain
	\[
	\left\|\sum_{j\in I}
	\Gamma_j^{n+\ell,\Omega_{n+\ell}^\lo}*b_{n-j,\ell}\right\|_2
	\leq
	C_d\norm[1]{\Omega}(1+\ell)^{1/2}
	2^{-3(n+\ell)/16}
	(\alpha\norm[1]{f})^{1/2}.
	\]
	Therefore, by the triangle inequality,
	\begin{equation}\label{eq:offprsum}
		\begin{split}
			&\left\|
			\sum_{n\geq n_0}\sum_{\ell\geq1}\sum_{j\in I}
			\Gamma_j^{n+\ell,\Omega_{n+\ell}^\lo}*b_{n-j,\ell}
			\right\|_2\\
			&\quad\leq
			C_d\norm[1]{\Omega}(\alpha\norm[1]{f})^{1/2}
			\sum_{n\geq n_0}\sum_{\ell\geq1}
			(1+\ell)^{1/2}2^{-3(n+\ell)/16}\\
			&\quad\leq
			C_d\norm[1]{\Omega}(\alpha\norm[1]{f})^{1/2}.
		\end{split}
	\end{equation}
	For the sum with $K_j^{\Omega_{n+\ell}^\lo}-\Gamma_j^{n+\ell,\Omega_{n+\ell}^\lo}$, write
	\[
	b_{n-j,\ell}=\sum_{Q\in\Q_{n+\ell-j}}\one_Qb_{n-j,\ell},\qquad
	\int_Q\one_Qb_{n-j,\ell}=0.
	\]
	Each cube has side $2^{j-(n+\ell)}$. Thus Lemma~\ref{lem:remainder}
	and Lemma~\ref{lem:lag-blocks}(i) give
	\begin{equation}\label{eq:offrem}
		\begin{split}
			&\left\|\sum_{n\geq n_0}\sum_{\ell\geq1}\sum_{j\in I}
			(K_j^{\Omega_{n+\ell}^\lo}-\Gamma_j^{n+\ell,\Omega_{n+\ell}^\lo})*b_{n-j,\ell}\right\|_1\\
			&\quad\leq C_d\norm[1]{\Omega}
			\sum_{n\geq n_0}\sum_{\ell\geq1}2^{-3(n+\ell)/16}
			\sum_{j\in I}\sum_{Q\in\Q_{n+\ell-j}}
			\norm[1]{\one_Qb_{n-j,\ell}}\\
			&\quad=C_d\norm[1]{\Omega}
			\sum_{n\geq n_0}\sum_{\ell\geq1}2^{-3(n+\ell)/16}
			\sum_{j\in I}\norm[1]{b_{n-j,\ell}}\\
			&\quad\leq C_d\norm[1]{\Omega}\norm[1]{f}
			\sum_{n\geq n_0}\sum_{\ell\geq1}2^{-3(n+\ell)/16}\\
			&\quad\leq C_d\norm[1]{\Omega}\norm[1]{f}.
		\end{split}
	\end{equation}
	
	\subsubsection{Completion of the off-diagonal estimate}
	Using \eqref{eq:bad-reduction}, we split
	\eqref{eq:offsplit} into its four terms and apply Chebyshev's
	inequality with threshold $\lambda/4$ to each term.
	Equations \eqref{eq:offhi}, \eqref{eq:glosum}, \eqref{eq:offprsum},
	and \eqref{eq:offrem} yield
	\begin{equation}\label{eq:offweak}
		\begin{split}
			d_{3\lambda}(T_{\Omega,I}\boff)
			&\leq2\varphi(1-\zeta)
			+\frac{16}{\lambda^2}
			\left\|\sum_{n\geq n_0}\sum_{\ell\geq1}\sum_{j\in I}
			K_j^{\Omega_{n+\ell}^\lo}*g_{n-j,\ell}\right\|_2^2\\
			&\quad+\frac{16}{\lambda^2}
			\left\|\sum_{n\geq n_0}\sum_{\ell\geq1}\sum_{j\in I}
			\Gamma_j^{n+\ell,\Omega_{n+\ell}^\lo}*b_{n-j,\ell}\right\|_2^2\\
			&\quad+\frac4\lambda
			\left\|\sum_{n\geq n_0}\sum_{\ell\geq1}\sum_{j\in I}
			K_j^{\Omega_{n+\ell}^\hi}*(b_{n-j,\ell}+g_{n-j,\ell})\right\|_1\\
			&\quad+\frac4\lambda
			\left\|\sum_{n\geq n_0}\sum_{\ell\geq1}\sum_{j\in I}
			(K_j^{\Omega_{n+\ell}^\lo}-\Gamma_j^{n+\ell,\Omega_{n+\ell}^\lo})*b_{n-j,\ell}\right\|_1\\
			&\leq C_d\left[\alpha^{-1}
			+\lambda^{-2}\norm[1]{\Omega}^2\alpha
			+\lambda^{-1}C_{\Omega,1}\right]\norm[1]{f}\\
			&\leq C_d C_{\Omega,1}\lambda^{-1}\norm[1]{f}.
		\end{split}
	\end{equation}

\subsection{Completion of the proof of Theorem~\ref{thm:main}}
\label{sec:completion}

\noindent\textit{Step 1.}
Let $f$ satisfy \eqref{eq:finitemodel}. By
\eqref{eq:goodweak}, \eqref{eq:diagweak}, and
\eqref{eq:offweak},
\begin{equation}\label{eq:finiteweak}
	\begin{aligned}
		d_{7\lambda}(T_{\Omega,I}f)
		&\leq d_\lambda(T_{\Omega,I}g)
		+d_{3\lambda}(T_{\Omega,I}\bd)
		+d_{3\lambda}(T_{\Omega,I}\boff)\\
		&\leq
		C_dC_{\Omega,1}\lambda^{-1}\norm[1]{f}.
	\end{aligned}
\end{equation}
By \eqref{eq:L2-convergence},
\[
T_{\Omega,[-J,J]}f\longrightarrow T_\Omega f
\qquad\text{in }L_2(\A).
\]
Hence, applying \eqref{eq:finiteweak} with $\lambda=t/14$,
\[
\begin{aligned}
	d_t(T_\Omega f)
	&\leq
	d_{t/2}(T_{\Omega,[-J,J]}f)
	+4t^{-2}
	\norm[2]{T_\Omega f-T_{\Omega,[-J,J]}f}^2\\
	&\leq
	C_dC_{\Omega,1}t^{-1}\norm[1]{f}
	+4t^{-2}
	\norm[2]{T_\Omega f-T_{\Omega,[-J,J]}f}^2.
\end{aligned}
\]
Letting $J\to\infty$ gives
\begin{equation}\label{eq:finiteOmegaweak}
	d_t(T_\Omega f)
	\leq
	C_dC_{\Omega,1}t^{-1}\norm[1]{f}.
\end{equation}

\medskip
\noindent\textit{Step 2.}
Let $f\in L_1(\A)_+\cap\A$ be compactly supported and set
$f_N=\E_Nf$. Then $f_N$ satisfies \eqref{eq:finitemodel} with
$M=N$, and
\[
\norm[1]{f_N}=\norm[1]{f},
\qquad
f_N\longrightarrow f
\quad\text{in }L_2(\A).
\]
Since $T_\Omega$ is bounded on $L_2(\A)$,
\[
T_\Omega f_N\longrightarrow T_\Omega f
\qquad\text{in }L_2(\A).
\]
By \eqref{eq:finiteOmegaweak},
\[
\begin{aligned}
	d_t(T_\Omega f)
	&\leq
	d_{t/2}(T_\Omega f_N)
	+4t^{-2}\norm[2]{T_\Omega f-T_\Omega f_N}^2\\
	&\leq
	C_dC_{\Omega,1}t^{-1}\norm[1]{f}
	+4t^{-2}\norm[2]{T_\Omega f-T_\Omega f_N}^2.
\end{aligned}
\]
Letting $N\to\infty$ gives
\[
d_t(T_\Omega f)
\leq
C_dC_{\Omega,1}t^{-1}\norm[1]{f}.
\]

\medskip
\noindent\textit{Step 3.}
Let $f\in L_1(\A)$ be bounded and compactly supported. Write
\[
f=(\operatorname{Re}f)_+-(\operatorname{Re}f)_-
+i(\operatorname{Im}f)_+-i(\operatorname{Im}f)_-.
\]
The four positive parts
$(\operatorname{Re}f)_\pm$ and $(\operatorname{Im}f)_\pm$
are bounded and compactly supported, and
\[
\|(\operatorname{Re}f)_+\|_1+\|(\operatorname{Re}f)_-\|_1
+\|(\operatorname{Im}f)_+\|_1+\|(\operatorname{Im}f)_-\|_1
\leq2\|f\|_1.
\]
Applying the preceding estimate to these four terms and using the
distribution inequality, we obtain
\[
\norm[1,\infty]{T_\Omega f}
\leq
C_dC_{\Omega,1}\norm[1]{f}.
\]
By density and the completeness of $L_{1,\infty}(\A)$,
$T_\Omega$ extends uniquely to a bounded linear map
\[
T_\Omega:L_1(\A)\longrightarrow L_{1,\infty}(\A).
\]
 This proves Theorem~\ref{thm:main}.

	\section{Proof of the diagonal lemma}\label{sec:diagproof}

We prove Lemma~\ref{lem:diagprincipal}. We first isolate the scalar
geometric estimates that will be used here and again in the proof of
Lemma~\ref{lem:offprincipal}. For complex-valued $\Psi$, set
\[
\widetilde H_{j\nu}^{s,\Psi}(x)
=
\overline{H_{j\nu}^{s,\Psi}(-x)}.
\]
The proof of \cite[Lemma~2.1]{Seeger} gives the following standard
sector estimates. The size of $E_\nu^s$ and polar coordinates give
\begin{equation}\label{eq:sectornorms}
\|H_{j\nu}^{s,\Psi}\|_\infty
\lesssim_d \|\Psi\|_\infty 2^{-jd},
\qquad
\|H_{j\nu}^{s,\Psi}\|_1
\lesssim_d \|\Psi\|_\infty 2^{-s(d-1)}.
\end{equation}
For $i\leq j$, there is a centered rectangle $R_{j\nu}^s$, with one
side comparable to $2^j$ in the direction $e_\nu^s$ and the remaining
sides comparable to $2^{j-s}$, such that
\begin{equation}\label{eq:rectangle}
\supp(\widetilde H_{j\nu}^{s,\Psi}*H_{i\nu}^{s,\Psi})
\subset R_{j\nu}^s,
\qquad
|R_{j\nu}^s|\lesssim_d2^{jd}2^{-s(d-1)},
\end{equation}
and
\begin{equation}\label{eq:interactionmajorant}
|(\widetilde H_{j\nu}^{s,\Psi}*H_{i\nu}^{s,\Psi})(z)|
\lesssim_d
\|\Psi\|_\infty^2
2^{-jd}2^{-s(d-1)}\one_{R_{j\nu}^s}(z),
\qquad i\leq j.
\end{equation}
Moreover, if a dyadic cube $Q$ with $\ell(Q)\leq2^{j-s}$ meets
$x-R_{j\nu}^s$, then
\begin{equation}\label{eq:cubecover}
Q\subset x-C_dR_{j\nu}^s,
\end{equation}
where $C_dR_{j\nu}^s$ is a fixed dimensional dilation of
$R_{j\nu}^s$. In particular,
\begin{equation}\label{eq:finecubecover}
\sum_{\substack{Q\in\Q_{s-j}\\
Q\cap\supp\bigl(H_{j\nu}^{s,\Psi}(x-\cdot)\bigr)\neq\varnothing}}
|Q|
\lesssim_d 2^{jd}2^{-s(d-1)}.
\end{equation}

We shall repeatedly use the following elementary packing observation.
\begin{lemma}\label{lem:projection-packing}
Let $E\subset\R^d$ have finite measure, and let $\mathscr C$ be a
finite collection of dyadic cubes contained in $E$. Then
\begin{equation}\label{eq:projectionpacking}
\sum_{Q\in\mathscr C}|Q|\pi_Q
\leq |E|\one_\M.
\end{equation}
\end{lemma}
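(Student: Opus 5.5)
The plan is to integrate a pointwise operator inequality. The one nontrivial input is that the projections $\pi_Q$ attached to dyadic cubes containing a common point are pairwise orthogonal.

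First, I will check that each $\pi_Q$ is a projection. Since $(q_k)$ is decreasing and $q_k=\sum_{Q\in\Q_k}q_Q\one_Q$, comparing the values on $Q\subset\widehat Q$ gives $q_Q\leq q_{\widehat Q}$. Hence $\pi_Q=q_{\widehat Q}-q_Q$ is a projection in $\M$. Moreover, for $x\in Q\in\Q_k$ we have $p_k(x)=\pi_Q$, by the formula $p_k=\sum_{Q\in\Q_k}\pi_Q\one_Q$.

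Next, I rewrite each summand as an integral over its cube. Since every $Q\in\mathscr C$ is contained in $E$,
\[
\sum_{Q\in\mathscr C}|Q|\pi_Q
=\int_{E}\sum_{Q\in\mathscr C}\one_Q(x)\pi_Q\dd x .
\]
This is a finite sum of operator-valued simple functions, so the integral is a genuine integral of positive elements of $\M$. Fix $x\in E$. The cubes of $\mathscr C$ that contain $x$ lie in pairwise distinct generations, since cubes of a single generation are disjoint. The cube from generation $k$ contributes $\pi_Q=p_k(x)$. By Lemma~\ref{lem:pk-properties}(i), the projections $p_k$ are pairwise orthogonal, and therefore so are their values $p_k(x)$ for a.e.\ $x$. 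A sum of pairwise orthogonal projections is again a projection, so
\[
\sum_{Q\in\mathscr C}\one_Q(x)\pi_Q\leq\sum_k p_k(x)=(1-q)(x)\leq\one_\M .
\]

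Finally, I integrate this inequality over $E$. Positivity is preserved under integration, since one may pair with normal positive functionals. This gives $\sum_{Q\in\mathscr C}|Q|\pi_Q\leq|E|\one_\M$, which is the claim.

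I do not expect a serious obstacle here. The only delicate point is the orthogonality step: it must use that at a fixed point at most one cube per generation contributes, so that the sum is over distinct $k$ and Lemma~\ref{lem:pk-properties}(i) applies.
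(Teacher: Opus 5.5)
Your proof is correct and follows the paper's argument: you write the sum as $\int_E\sum_{Q\in\mathscr C}\one_Q(y)\pi_Q\dd y$, dominate the integrand pointwise by $\sum_k p_k(y)=(1-q)(y)\leq\one_\M$ using the orthogonality of the $p_k$ from Lemma~\ref{lem:pk-properties}(i), and then integrate over $E$. Your remark that at each point at most one cube per generation contributes is exactly what the paper uses implicitly when it bounds $\sum_{Q\in\mathscr C\cap\Q_k}\one_Q(y)\pi_Q$ by $p_k(y)$.
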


\begin{proof}
Using $p_k=\sum_{Q\in\Q_k}\pi_Q\one_Q$ and the orthogonality of the
$p_k$'s, we have
\[
\begin{aligned}
\sum_{Q\in\mathscr C}|Q|\pi_Q
&=
\int_E\sum_k\sum_{Q\in\mathscr C\cap\Q_k}
\one_Q(y)\pi_Q\dd y\\
&\leq
\int_E\sum_kp_k(y)\dd y
=
\int_E(1-q(y))\dd y
\leq |E|\one_\M.
\end{aligned}
\]
\end{proof}

The next estimate packages the geometric argument needed both in the
present proof and in the far-interaction part of
Lemma~\ref{lem:offprincipal}.
\begin{lemma}\label{lem:sector-packing}
Assume that $f$ satisfies \eqref{eq:finitemodel}. Fix
$s\geq n_0$, an index $\nu$, and $\Psi\in L_\infty(S^{d-1})$.
If $r\leq s$, then for every $j\in I$ and $x\in\R^d$,
\begin{equation}\label{eq:sector-packing}
\begin{aligned}
&\sum_{\substack{i\in I\\ i\leq j-(s-r)}}
\int_{\R^d}
|(\widetilde H_{j\nu}^{s,\Psi}*H_{i\nu}^{s,\Psi})(x-y)|
\,p_{r-i}(y)f(y)p_{r-i}(y)\dd y\\
&\qquad\lesssim_d
\alpha\|\Psi\|_\infty^2
2^{-2s(d-1)}\one_\M.
\end{aligned}
\end{equation}
The same bound holds with $f$ replaced by $f_{s-i}$.
\end{lemma}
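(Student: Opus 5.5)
The plan is to replace the interaction kernel by its geometric majorant, cut the projections into dyadic cubes, and control each cube by the dyadic regularity \eqref{eq:cubeproperties}. The packing observation, Lemma~\ref{lem:projection-packing}, then absorbs the sum over all cubes and all scales $i$ at once.

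\emph{Step 1: majorant.} Since $f\geq0$, every integrand $p_{r-i}(y)f(y)p_{r-i}(y)$ is a positive operator. So I may replace $|(\widetilde H_{j\nu}^{s,\Psi}*H_{i\nu}^{s,\Psi})(x-y)|$ by the majorant \eqref{eq:interactionmajorant}; this is legitimate because $i\leq j-(s-r)\leq j$. The left-hand side of \eqref{eq:sector-packing} is then bounded, in the operator order, by
\[
C_d\|\Psi\|_\infty^2\,2^{-jd}2^{-s(d-1)}
\sum_{\substack{i\in I\\ i\leq j-(s-r)}}\int_{x-R_{j\nu}^s}p_{r-i}(y)f(y)p_{r-i}(y)\dd y .
\]
It therefore suffices to show that the sum of integrals is $\lesssim_d\alpha\,2^{jd}2^{-s(d-1)}\one_\M$.

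\emph{Step 2: cube decomposition and regularity.} Fix $i$ and set $k=r-i$. Write $p_k=\sum_{Q\in\Q_k}\pi_Q\one_Q$. Then
\[
\int_{x-R_{j\nu}^s}p_kfp_k
\leq\sum_{\substack{Q\in\Q_k\\ Q\cap(x-R_{j\nu}^s)\neq\varnothing}}\int_Q\pi_Qf\pi_Q
=\sum_{\substack{Q\in\Q_k\\ Q\cap(x-R_{j\nu}^s)\neq\varnothing}}|Q|\,\pi_Qf_Q\pi_Q .
\]
The inequality holds because each term is positive, and the equality holds because $\pi_Q$ is constant on $Q$. By \eqref{eq:cubeproperties}, $\pi_Qf_Q\pi_Q\leq2^d\alpha\pi_Q$. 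The side length of each such $Q$ is $2^{i-r}\leq2^{j-s}$, again because $i\leq j-(s-r)$. Hence \eqref{eq:cubecover} gives $Q\subset x-C_dR_{j\nu}^s$.

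\emph{Step 3: packing.} Different values of $i$ correspond to different generations $r-i$, so the cubes collected over all admissible $i$ form a family of distinct dyadic cubes, all contained in $E=x-C_dR_{j\nu}^s$. For finiteness, I restrict to those $Q$ with $\pi_Q\neq0$. Under \eqref{eq:finitemodel} this leaves only finitely many cubes meeting the bounded set $E$; alternatively, one can apply the lemma to an arbitrary finite subfamily and pass to the supremum. Lemma~\ref{lem:projection-packing} then yields
\[
\sum_Q|Q|\pi_Q\leq|C_dR_{j\nu}^s|\one_\M\lesssim_d2^{jd}2^{-s(d-1)}\one_\M ,
\]
using \eqref{eq:rectangle}. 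Combining Steps 1--3 gives $\alpha\|\Psi\|_\infty^2 2^{-2s(d-1)}\one_\M$, which is \eqref{eq:sector-packing}.

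\emph{The variant with $f_{s-i}$.} Here $f_{s-i}\geq0$, so positivity is preserved and Step~1 goes through unchanged. In Step~2, for $Q\in\Q_{r-i}$ we have $s-i\geq r-i$ since $r\leq s$. Hence $\E_{r-i}f_{s-i}=f_{r-i}$, which means $\int_Qf_{s-i}=|Q|f_Q$, and the argument is identical from there.

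The only delicate point is the bookkeeping in Step~3. The whole sum over scales $i$ must be absorbed by a single packing bound, which requires checking two things: that the side-length condition $2^{i-r}\leq2^{j-s}$ really holds uniformly for every admissible $i$, so that \eqref{eq:cubecover} applies, and that the cubes from different generations are distinct, so that Lemma~\ref{lem:projection-packing} applies without overcounting.
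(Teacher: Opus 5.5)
Your proposal is correct and follows the paper's proof essentially step for step. Both bound the interaction kernel by the majorant \eqref{eq:interactionmajorant}, apply the cube-wise regularity \eqref{eq:cubeproperties}, use the covering \eqref{eq:cubecover} via $2^{i-r}\le 2^{j-s}$, and absorb all generations with a single application of Lemma~\ref{lem:projection-packing}; the $f_{s-i}$ case is handled through $\E_{r-i}f_{s-i}=f_{r-i}$. Your finiteness concern is harmless and in fact automatic, since $I$ is finite and $x-R_{j\nu}^s$ is bounded.
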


\begin{proof}
For $Q\in\Q_{r-i}$, the cube estimate \eqref{eq:cubeproperties} gives
\begin{equation}\label{eq:sector-cube-average}
\int_Qp_{r-i}(y)f(y)p_{r-i}(y)\dd y
=
|Q|\pi_Qf_Q\pi_Q
\leq 2^d\alpha|Q|\pi_Q.
\end{equation}
Since $r\leq s$ and $\E_{r-i}f_{s-i}=f_{r-i}$, the same identity and
bound hold with $f$ replaced by $f_{s-i}$.

If $i\leq j-(s-r)$, then every $Q\in\Q_{r-i}$ has
\[
\ell(Q)=2^{i-r}\leq2^{j-s}.
\]
Thus, by \eqref{eq:cubecover}, every such cube meeting
$x-R_{j\nu}^s$ is contained in $x-C_dR_{j\nu}^s$. Using
\eqref{eq:interactionmajorant}, \eqref{eq:sector-cube-average},
Lemma~\ref{lem:projection-packing}, and \eqref{eq:rectangle}, we obtain
\[
\begin{aligned}
&\sum_{\substack{i\in I\\ i\leq j-(s-r)}}
\int_{\R^d}
|(\widetilde H_{j\nu}^{s,\Psi}*H_{i\nu}^{s,\Psi})(x-y)|
\,p_{r-i}(y)f(y)p_{r-i}(y)\dd y\\
&\quad\lesssim_d
\|\Psi\|_\infty^2 2^{-jd}2^{-s(d-1)}
\sum_{\substack{i\in I,\ i\leq j-(s-r)\\
Q\in\Q_{r-i},\ Q\cap(x-R_{j\nu}^s)\ne\varnothing}}
\int_Qp_{r-i}(y)f(y)p_{r-i}(y)\dd y\\
&\quad\lesssim_d
\alpha\|\Psi\|_\infty^2 2^{-jd}2^{-s(d-1)}
\sum_{\substack{i\in I,\ i\leq j-(s-r)\\
Q\in\Q_{r-i},\ Q\subset x-C_dR_{j\nu}^s}}
|Q|\pi_Q\\
&\quad\lesssim_d
\alpha\|\Psi\|_\infty^2 2^{-jd}2^{-s(d-1)}
|C_dR_{j\nu}^s|\one_\M\\
&\quad\lesssim_d
\alpha\|\Psi\|_\infty^2
2^{-2s(d-1)}\one_\M.
\end{aligned}
\]
The proof for $f_{s-i}$ is identical, using the second assertion in
\eqref{eq:sector-cube-average}.
\end{proof}

For later use, we also record the standard convolution pairing
identity. By Fubini's theorem and the traciality of $\tau$, for
$i\leq j$ and $h_i,h_j\in L_2(\A)$,
\begin{equation}\label{eq:convolutionpairing}
\begin{aligned}
&\left\langle
H_{i\nu}^{s,\Psi}*h_i,
H_{j\nu}^{s,\Psi}*h_j
\right\rangle_2\\
&\qquad=
\iint_{\R^d\times\R^d}
(\widetilde H_{j\nu}^{s,\Psi}*H_{i\nu}^{s,\Psi})(x-y)
\tau\bigl(h_i(y)h_j(x)^*\bigr)\dd y\dd x.
\end{aligned}
\end{equation}

\begin{proof}[Proof of Lemma~\ref{lem:diagprincipal}]
By \eqref{eq:Seegerkernels} and Lemma~\ref{lem:overlap},
\begin{equation}\label{eq:diag-overlap-reduction}
\begin{aligned}
\left\|
\sum_{j\in I}\Gamma_j^{s,\Psi}*b_{\mathrm d,s-j}
\right\|_2^2
&\leq
C_d2^{s(d-3/2)}
\sum_\nu
\left\|
\sum_{j\in I}H_{j\nu}^{s,\Psi}*b_{\mathrm d,s-j}
\right\|_2^2.
\end{aligned}
\end{equation}
Since \eqref{eq:angularpartition} gives
$\#\{\nu\}\lesssim_d2^{s(d-1)}$, it is enough to prove, uniformly in
$\nu$,
\begin{equation}\label{eq:diagsector}
\left\|
\sum_{j\in I}H_{j\nu}^{s,\Psi}*b_{\mathrm d,s-j}
\right\|_2^2
\lesssim_d
\|\Psi\|_\infty^2\alpha\|f\|_1\,2^{-2s(d-1)}.
\end{equation}
Write
\[
u_j=p_{s-j}fp_{s-j},
\qquad
v_j=p_{s-j}f_{s-j}p_{s-j},
\qquad
b_{\mathrm d,s-j}=u_j-v_j.
\]
Both $u_j$ and $v_j$ are positive. We first estimate the family
$(u_j)_j$. Since $I$ is finite, expansion of the squared norm gives
\begin{equation}\label{eq:diag-u-expansion}
\left\|\sum_{j\in I}H_{j\nu}^{s,\Psi}*u_j\right\|_2^2
\leq
2\sum_{j\in I}\sum_{\substack{i\in I\\i\leq j}}
\left|
\left\langle H_{i\nu}^{s,\Psi}*u_i,
H_{j\nu}^{s,\Psi}*u_j\right\rangle_2
\right|.
\end{equation}
By \eqref{eq:convolutionpairing}, positivity, and traciality,
\[
\begin{aligned}
&\left|\left\langle H_{i\nu}^{s,\Psi}*u_i,
H_{j\nu}^{s,\Psi}*u_j\right\rangle_2\right|\\
&\quad\leq
\iint
|(\widetilde H_{j\nu}^{s,\Psi}*H_{i\nu}^{s,\Psi})(x-y)|
\tau\bigl(u_i(y)u_j(x)\bigr)\dd y\dd x.
\end{aligned}
\]
Summing first in $i\leq j$ and applying
Lemma~\ref{lem:sector-packing} with $r=s$, we obtain
\[
\begin{aligned}
\left\|\sum_{j\in I}H_{j\nu}^{s,\Psi}*u_j\right\|_2^2
&\lesssim_d
\alpha\|\Psi\|_\infty^2 2^{-2s(d-1)}
\sum_{j\in I}\varphi(u_j)\\
&=
\alpha\|\Psi\|_\infty^2 2^{-2s(d-1)}
\sum_{j\in I}\varphi(fp_{s-j})\\
&\lesssim_d
\alpha\|\Psi\|_\infty^2 2^{-2s(d-1)}\|f\|_1,
\end{aligned}
\]
where the last step follows from \eqref{eq:massbudget}.

The estimate for $(v_j)_j$ follows in the same way, using the
version of Lemma~\ref{lem:sector-packing} with $f$ replaced by
$f_{s-i}$. Moreover,
\[
\varphi(v_j)
=
\varphi(p_{s-j}f_{s-j}p_{s-j})
=
\varphi(fp_{s-j}),
\]
and hence
\[
\left\|\sum_{j\in I}H_{j\nu}^{s,\Psi}*v_j\right\|_2^2
\lesssim_d
\alpha\|\Psi\|_\infty^2
2^{-2s(d-1)}\|f\|_1.
\]
This proves \eqref{eq:diagsector}. Inserting it into
\eqref{eq:diag-overlap-reduction} and using
\eqref{eq:angularpartition}, we finally get
\[
\begin{aligned}
\left\|
\sum_{j\in I}\Gamma_j^{s,\Psi}*b_{\mathrm d,s-j}
\right\|_2^2
&\lesssim_d
2^{s(d-3/2)}2^{s(d-1)}
\|\Psi\|_\infty^2\alpha\|f\|_1
2^{-2s(d-1)}\\
&=
C_d\|\Psi\|_\infty^2\alpha\|f\|_1\,2^{-s/2}.
\end{aligned}
\]
This is the desired estimate.
\end{proof}

\section{Proofs of the off-diagonal lemmas}\label{sec:offproof}
	
	In this section, we prove Lemmas~\ref{lem:cumulative},
	\ref{lem:martingale}, and~\ref{lem:offprincipal}. They are used in
	Section~\ref{sec:plan}, respectively, for the terms involving
	$K_j^{\Omega_{n+\ell}^{\hi}}
	*(b_{n-j,\ell}+g_{n-j,\ell})$,
	$K_j^{\Omega_{n+\ell}^{\lo}}*g_{n-j,\ell}$, and
	$\Gamma_j^{n+\ell,\Omega_{n+\ell}^{\lo}}*b_{n-j,\ell}$.
	
	\subsection{Proof of Lemma~\ref{lem:cumulative}}
	\label{sec:cumulativeproof}
	
	\begin{proof}

		Fix an integer  $h\geq n_0$ and $j\in\Z$. Since the definitions of
		$b_{n-j,\ell}$ and $g_{n-j,\ell}$ involve the two indices
		$n-j$ and $n+\ell-j$, we reindex the sum by setting
		\[
		k=n-j,\qquad r=n+\ell-j=k+\ell.
		\]
		Then
		\[
		n\geq n_0,\quad \ell\geq1,\quad n+\ell\leq h
		\]
		is equivalent to
		\[
		a:=n_0-j\leq k<r\leq h-j:=b.
		\]
		Thus
		\[
		\sum_{\substack{n\geq n_0,\ \ell\geq1\\ n+\ell\leq h}}
		(b_{n-j,\ell}+g_{n-j,\ell})
		=
		\sum_{a\leq k<r\leq b}
		(b_{k,r-k}+g_{k,r-k}).
		\]
		By the definitions of $b_{k,\ell}$ and $g_{k,\ell}$ in
		\eqref{eq-bkl-gkl}, for $r>k$,
		\[
		\begin{aligned}
			b_{k,r-k}+g_{k,r-k}
			&=
			\bigl[
			p_k(f-f_r)p_r+p_kdf_rq_{r-1}
			\bigr]\\
			&\quad+
			\bigl[
			p_r(f-f_r)p_k+q_{r-1}df_rp_k
			\bigr].
		\end{aligned}
		\]
		Since $f$, $f_r$, and $df_r$ are self-adjoint, the second bracket is
		the adjoint of the first.
		
		We first sum the first bracket. Using
		$q_{r-1}=p_r+q_r$ and $df_r=f_r-f_{r-1}$, we have
		\[
		p_k(f-f_r)p_r+p_kdf_rq_{r-1}
		=
		p_kfp_r+p_kf_rq_r-p_kf_{r-1}q_{r-1}.
		\]
		Therefore, for $a\leq k<b$,
		\[
		\begin{aligned}
			&\sum_{r=k+1}^{b}
			\bigl[
			p_k(f-f_r)p_r+p_kdf_rq_{r-1}
			\bigr]\\
			&=
			\sum_{r=k+1}^{b}p_kfp_r
			+p_kf_bq_b-p_kf_kq_k\\
			&=
			\sum_{r=k+1}^{b}p_kfp_r+p_kf_bq_b,
		\end{aligned}
		\]
		where $p_kf_kq_k=0$.
				Taking adjoints and adding the two identities gives
		\[
		\begin{aligned}
			&\sum_{r=k+1}^{b}
			\bigl(b_{k,r-k}+g_{k,r-k}\bigr)\\
			&=
			\sum_{r=k+1}^{b}
			(p_kfp_r+p_rfp_k)
			+p_kf_bq_b+q_bf_bp_k.
		\end{aligned}
		\]
		Summing over $a\leq k<b$, we obtain
		\[
		\begin{aligned}
			&\sum_{\substack{n\geq n_0,\ \ell\geq1\\ n+\ell\leq h}}
			(b_{n-j,\ell}+g_{n-j,\ell})\\
			&=
			\sum_{a\leq k<r\leq b}
			(p_kfp_r+p_rfp_k)
			+\sum_{k=a}^{b-1}
			\bigl(p_kf_bq_b+q_bf_bp_k\bigr).
		\end{aligned}
		\]
		Set
		$
		P=\sum_{k=a}^{b}p_k.
		$  Since the $p_k$ are pairwise orthogonal,
		\[
		\sum_{a\leq k<r\leq b}(p_kfp_r+p_rfp_k)
		=
		PfP-\sum_{k=a}^{b}p_kfp_k.
		\]
		Moreover, $p_bf_bq_b=q_bf_bp_b=0$, and therefore
		\[
		\sum_{k=a}^{b-1}
		\bigl(p_kf_bq_b+q_bf_bp_k\bigr)
		=
		Pf_bq_b+q_bf_bP.
		\]
		Consequently,
		\begin{equation}\label{eq:cumulativeidentity}
			\begin{split}
				&\sum_{\substack{n\geq n_0,\ \ell\geq1\\ n+\ell\leq h}}
				(b_{n-j,\ell}+g_{n-j,\ell})\\
				&=
				PfP-\sum_{k=a}^{b}p_kfp_k
				+Pf_bq_b+q_bf_bP.
			\end{split}
		\end{equation}	
		We now estimate the four terms on the right-hand side. Set
		\[
		\Lambda=\sum_{k=a}^{b}\varphi(fp_k).
		\]
		We have
		\[
		\|PfP\|_1
		=
		\varphi(PfP)
		=
		\varphi(fP)
		=
		\Lambda,
		\]
		and
		\[
		\left\|
		\sum_{k=a}^{b}p_kfp_k
		\right\|_1
		=
		\sum_{k=a}^{b}\varphi(p_kfp_k)
		=
		\Lambda.
		\]
		It remains to estimate the boundary term $Pf_bq_b$. By
		\eqref{eq:massbudget},
		\[
		\|P\|_2^2
		=
		\varphi(P)
		=
		\sum_{k=a}^{b}\varphi(p_k)
		\leq
		\alpha^{-1}\Lambda.
		\]
		Moreover,
		\[
		\|f_b^{1/2}q_bf_b^{1/2}\|_\infty
		=
		\|q_bf_bq_b\|_\infty
		\leq\alpha,
		\]
		and hence
		\[
		Pf_bq_bf_bP
		=
		Pf_b^{1/2}
		\bigl(f_b^{1/2}q_bf_b^{1/2}\bigr)
		f_b^{1/2}P
		\leq
		\alpha Pf_bP.
		\]
		Therefore
		\[
		\begin{aligned}
			\|Pf_bq_b\|_2^2
			&=
			\varphi(Pf_bq_bf_bP)
			\leq
			\alpha\varphi(Pf_bP)\\
			&=
			\alpha\varphi(f_bP)
			=
			\alpha\varphi(fP)
			=
			\alpha\Lambda.
		\end{aligned}
		\]
		Here  the trace-preserving property of
		$\E_b$ gives $\varphi(f_bP)=\varphi(fP)$.
		H\"older's inequality gives
		\[
		\|Pf_bq_b\|_1
		\leq
		\|P\|_2\|Pf_bq_b\|_2
		\leq
		\Lambda.
		\]
		Its adjoint satisfies
		\[
		\|q_bf_bP\|_1
		=
		\|Pf_bq_b\|_1
		\leq
		\Lambda.
		\]
		Applying the triangle inequality to
		\eqref{eq:cumulativeidentity}, we obtain
		\[
		\left\|
		\sum_{\substack{n\geq n_0,\ \ell\geq1\\n+\ell\leq h}}
		(b_{n-j,\ell}+g_{n-j,\ell})
		\right\|_1
		\leq
		4\Lambda
		=
		4\sum_{k=n_0-j}^{h-j}\varphi(fp_k),
		\]
		which proves \eqref{eq:cumulativebounds}.
		
		Finally, summing in $j$ gives
		\[
		\begin{aligned}
			&\sum_{j\in\Z}
			\left\|
			\sum_{\substack{n\geq n_0,\ \ell\geq1\\n+\ell\leq h}}
			(b_{n-j,\ell}+g_{n-j,\ell})
			\right\|_1\\
			&\quad\leq
			4\sum_{j\in\Z}
			\sum_{k=n_0-j}^{h-j}\varphi(fp_k)\\
			&\quad=
			4\sum_k\varphi(fp_k)
			\#\{j\in\Z:n_0-j\leq k\leq h-j\}\\
			&\quad=
			4(h-n_0+1)\sum_k\varphi(fp_k)\\
			&\quad\leq
			4(h-n_0+1)\norm[1]{f},
		\end{aligned}
		\]
		which is \eqref{eq:cumulativeglobal}.
	\end{proof}

	\subsection{Proof of Lemma~\ref{lem:martingale}}
	\label{sec:martingaleproof}
	The proof has two parts. We first establish an $L_2$ estimate for
	$h_j$ by adapting the argument in the proof of
	\cite[Lemma~3.10]{Lai}.  We then combine this
	estimate with the Fourier decay \eqref{eq:boundedmultiplier}.
	\begin{proof}
		
		For each $t\in\Z$, let $\Delta_t$ be the Fourier multiplier defined by
		\[
		\widehat{\Delta_t h}(\xi)
		=
		\beta(2^{-t}|\xi|)\widehat h(\xi).
		\]
		Denote
		\[
		\beta_t(\xi)=\beta(2^{-t}|\xi|),
		\qquad
		\check\beta_t=\mathcal F^{-1}\beta_t.
		\]
		Then
		\[
		\Delta_t h
		=
		\mathcal F^{-1}(\beta_t\widehat h)
		=
		\check\beta_t*h.
		\]	
	We first establish
	\begin{equation}\label{eq:martcancel}
		\|\Delta_t h_j\|_2
		\lesssim_d
		\min\{1,2^{j+t-s}\}\|h_j\|_2,
		\qquad j\in I,\ t\in\Z.
	\end{equation}
For $j+t\le s-1$, the same argument as in the proof of
\cite[Lemma~3.10]{Lai} gives
\[
\|\Delta_t h_j\|_2
\lesssim_d
2^{j+t-s}\|h_j\|_2.
\]
For $j+t\ge s$, Plancherel's theorem gives
\[
\begin{aligned}
	\|\Delta_t h_j\|_2^2
	&=
	(2\pi)^{-d}
	\int_{\R^d}
	\beta_t(\xi)^2
	\|\widehat h_j(\xi)\|_{L_2(\M)}^2\dd\xi\\
	&\leq
	\|\beta\|_\infty^2\|h_j\|_2^2.
\end{aligned}
\]
Combining the two estimates yields \eqref{eq:martcancel}.

		We next combine \eqref{eq:martcancel} with the Fourier decay of
		$K_j^\Psi$. On the support of $\beta_t$,
		\[
		2^{t-1}<|\xi|<2^{t+1},
		\]
		and therefore \eqref{eq:boundedmultiplier} gives
		\[
		|\widehat K_j^\Psi(\xi)|
		\lesssim_d
		\|\Psi\|_\infty
		\min\{1,2^{-j-t}\}.
		\]
		By Plancherel's theorem,
		\[
		\begin{aligned}
			\|\Delta_t(K_j^\Psi*h_j)\|_2^2
			&=
			(2\pi)^{-d}
			\int_{\R^d}
			\beta_t(\xi)^2
			|\widehat K_j^\Psi(\xi)|^2
			\|\widehat h_j(\xi)\|_{L_2(\M)}^2\dd\xi\\
			&\lesssim_d
			\|\Psi\|_\infty^2
			\min\{1,2^{-j-t}\}^2
			\|\Delta_t h_j\|_2^2.
		\end{aligned}
		\]
		Using \eqref{eq:martcancel}, we obtain
		\begin{equation}\label{eq:martfreqbound}
			\|\Delta_t(K_j^\Psi*h_j)\|_2
			\lesssim_d
			\|\Psi\|_\infty
			w_s(j+t)\|h_j\|_2,
		\end{equation}
		where,  for $u\in\Z$,
		\[
		w_s(u)
		=
		\min\{1,2^{-u}\}
		\min\{1,2^{u-s}\}.
		\]
		Since
		\[
		\sum_{t\in\Z}\beta_t(\xi)^2=1
		\qquad(\xi\ne0),
		\]
		Plancherel's theorem  gives the following identity
		\[
		\left\|
		\sum_{j\in I}K_j^\Psi*h_j
		\right\|_2^2
		=
		\sum_{t\in\Z}
		\left\|
		\Delta_t
		\left(
		\sum_{j\in I}K_j^\Psi*h_j
		\right)
		\right\|_2^2.
		\]
		Since $I$ is finite, the triangle inequality   and
		\eqref{eq:martfreqbound} yield
		\[
		\begin{aligned}
			\left\|
			\sum_{j\in I}K_j^\Psi*h_j
			\right\|_2^2
			&\lesssim_d
			\|\Psi\|_\infty^2
			\sum_{t\in\Z}
			\left[
			\sum_{j\in I}
			w_s(j+t)\|h_j\|_2
			\right]^2.
		\end{aligned}
		\]
		For each fixed $t$, the Cauchy--Schwarz inequality gives
		\[
		\begin{aligned}
			\left[
			\sum_{j\in I}
			w_s(j+t)\|h_j\|_2
			\right]^2
			&\leq
			\left[
			\sum_{j\in I}w_s(j+t)
			\right]
			\left[
			\sum_{j\in I}
			w_s(j+t)\|h_j\|_2^2
			\right]\\
			&\lesssim
			(1+s)2^{-s}
			\sum_{j\in I}
			w_s(j+t)\|h_j\|_2^2,
		\end{aligned}
		\]
		where we used
		$
		\sum_{u\in\Z}w_s(u)
		\lesssim
		(1+s)2^{-s}.
		$
		Therefore,
		\[
		\begin{aligned}
			\sum_{t\in\Z}
			\left[
			\sum_{j\in I}
			w_s(j+t)\|h_j\|_2
			\right]^2
			&\lesssim
			(1+s)2^{-s}
			\sum_{j\in I}
			\|h_j\|_2^2
			\sum_{t\in\Z}w_s(j+t)\\
			&\lesssim
			(1+s)^2 2^{-2s}
			\sum_{j\in I}\|h_j\|_2^2.
		\end{aligned}
		\]
		This gives
		\[
		\left\|
		\sum_{j\in I}K_j^\Psi*h_j
		\right\|_2
		\lesssim_d
		\|\Psi\|_\infty
		(1+s)2^{-s}
		\left(
		\sum_{j\in I}\|h_j\|_2^2
		\right)^{1/2}.
		\]
		We finish the proof.
	\end{proof}

\subsection{Proof of Lemma~\ref{lem:offprincipal}}
\label{sec:twosided}

\begin{proof}
	By \eqref{eq:Seegerkernels} and Lemma~\ref{lem:overlap},
	\begin{equation}\label{eq:off-overlap-reduction}
		\begin{aligned}
			\left\|
			\sum_{j\in I}
			\Gamma_j^{s,\Psi}*b_{n-j,\ell}
			\right\|_2^2
			&=
			\left\|
			\sum_\nu
			P_\nu^s
			\left(
			\sum_{j\in I}
			H_{j\nu}^{s,\Psi}*b_{n-j,\ell}
			\right)
			\right\|_2^2\\
			&\leq
			C_d2^{s(d-3/2)}
			\sum_\nu
			\left\|
			\sum_{j\in I}
			H_{j\nu}^{s,\Psi}*b_{n-j,\ell}
			\right\|_2^2.
		\end{aligned}
	\end{equation}
	In view of \eqref{eq:angularpartition}, to prove
	\eqref{eq:offprincipal}, it suffices to show, uniformly in $\nu$, that
	\begin{equation}\label{eq:off-sector-target}
		\left\|
		\sum_{j\in I}
		H_{j\nu}^{s,\Psi}*b_{n-j,\ell}
		\right\|_2^2
		\lesssim_d
		\norm[\infty]{\Psi}^2
		(1+\ell)\alpha\norm[1]{f}\,
		2^{-2s(d-1)}.
	\end{equation}
	By \eqref{eq-bkl-gkl},
	\[
	\begin{aligned}
		b_{n-j,\ell}
		&=
		p_{n-j}fp_{s-j}
		-p_{n-j}f_{s-j}p_{s-j}
		+
		p_{s-j}fp_{n-j}
		-p_{s-j}f_{s-j}p_{n-j}.
	\end{aligned}
	\]
	The last two terms are adjoints of the first two.
	For a fixed $\nu$, set
	\[
	u_j
	=
	H_{j\nu}^{s,\Psi}*(p_{n-j}fp_{s-j}),
	\qquad
	v_j
	=
	H_{j\nu}^{s,\Psi}*(p_{n-j}f_{s-j}p_{s-j}).
	\]
	It therefore remains to prove
	\begin{equation}\label{eq:off-fixed-two}
		\left\|\sum_{j\in I}u_j\right\|_2^2
		+
		\left\|\sum_{j\in I}v_j\right\|_2^2
		\lesssim_d
		\norm[\infty]{\Psi}^2
		(1+\ell)\alpha\norm[1]{f}\,
		2^{-2s(d-1)}.
	\end{equation}	
	Expanding the squared norm, we have
	\begin{equation}\label{eq-u-v}
		\left\|\sum_{j\in I}u_j\right\|_2^2
		=
		\sum_{j\in I}\|u_j\|_2^2
		+
		2\operatorname{Re}
		\sum_{\substack{i,j\in I\\ i<j}}
		\langle u_i,u_j\rangle_2,
	\end{equation}
	and the same identity holds for $(v_j)_{j\in I}$.
	
	\medskip
	\noindent\textit{Single-scale terms.}
	For $u_j$, the column H\"older inequality \eqref{eq:column22} gives, for every $x\in\R^d$,
	\[
	\begin{aligned}
		\norm[L_2(\M)]{u_j(x)}^2
		&\leq
		\left\|
		\int_{\R^d}
		|H_{j\nu}^{s,\Psi}(x-y)|
		p_{s-j}(y)f(y)p_{s-j}(y)\dd y
		\right\|_\infty\\
		&\qquad\times
		\int_{\R^d}
		|H_{j\nu}^{s,\Psi}(x-y)|
		\tau(p_{n-j}(y)f(y)p_{n-j}(y))\dd y.
	\end{aligned}
	\]
	The $L_\infty$-factor is estimated using \eqref{eq:sectornorms},
	\eqref{eq:finecubecover}, and \eqref{eq:cubeproperties}:
	\[
	\begin{aligned}
		& \left\|\int_{\R^d}|H_{j \nu}^{s, \Psi}(x-y)|
		p_{s-j}(y) f(y) p_{s-j}(y) \dd y\right\|_{\infty} \\
		& \quad \leq
		\|H_{j \nu}^{s, \Psi}\|_{\infty}
		\left\|\sum_{\substack{Q \in \Q_{s-j} \\
				Q \cap\left(x-\operatorname{supp} H_{j \nu}^{s, \Psi}\right) \neq \varnothing}}
		\int_Q p_{s-j}(y) f(y) p_{s-j}(y) \dd y\right\|_{\infty} \\
		& \quad \lesssim_d \alpha\|\Psi\|_{\infty} 2^{-j d}
		\left\|\sum_{\substack{Q \in \Q_{s-j} \\
				Q \cap\left(x-\operatorname{supp} H_{j \nu}^{s, \Psi}\right) \neq \varnothing}}
		|Q| \pi_Q\right\|_{\infty} \\
		& \quad \lesssim_d \alpha\|\Psi\|_{\infty} 2^{-s(d-1)} .
	\end{aligned}
	\]
	Here, for $Q\in\Q_{s-j}$, \eqref{eq:cubeproperties} gives
	\[
	\int_Qp_{s-j}(y)f(y)p_{s-j}(y)\dd y
	\leq
	2^d\alpha|Q|\pi_Q.
	\]
	Integrating in $x$, summing over $j \in I$, and using Fubini's theorem,
	\eqref{eq:sectornorms}, and \eqref{eq:massbudget}, we obtain
	\begin{equation}\label{eq:off-single-raw}
		\begin{aligned}
			\sum_{j \in I}\left\|u_j\right\|_2^2
			& \lesssim_d
			\alpha\|\Psi\|_{\infty} 2^{-s(d-1)}
			\sum_{j \in I}\|H_{j \nu}^{s, \Psi}\|_1
			\varphi\left(p_{n-j} f p_{n-j}\right) \\
			& =
			\alpha\|\Psi\|_{\infty} 2^{-s(d-1)}
			\sum_{j \in I}\|H_{j \nu}^{s, \Psi}\|_1
			\varphi\left(f p_{n-j}\right) \\
			& \lesssim_d
			\alpha\|\Psi\|_{\infty}^2 2^{-2 s(d-1)}
			\sum_{j \in I} \varphi\left(f p_{n-j}\right) \\
			& \lesssim_d
			\alpha\|\Psi\|_{\infty}^2 2^{-2 s(d-1)}\|f\|_1 .
		\end{aligned}
	\end{equation}
	
	For $v_j$, since $n-j \leq s-j$, we have $p_{n-j} \in \A_{s-j}$. Moreover,
	\[
	\begin{aligned}
		\left\|p_{n-j} f_{s-j} p_{s-j}\right\|_2^2
		& \leq
		\left\|p_{s-j} f_{s-j} p_{s-j}\right\|_{\infty}
		\varphi\left(p_{n-j} f_{s-j} p_{n-j}\right) \\
		& \leq 2^d \alpha \varphi\left(f p_{n-j}\right),
	\end{aligned}
	\]
	where we used \eqref{eq:cubeproperties} and the trace-preserving property of $\E_{s-j}$.
	Hence, by Young's inequality \eqref{eq-Young}, \eqref{eq:sectornorms}, and
	\eqref{eq:massbudget},
	\begin{equation}\label{eq:off-single-avg}
		\begin{aligned}
			\sum_{j \in I}\left\|v_j\right\|_2^2
			& \leq
			\sum_{j \in I}\|H_{j \nu}^{s, \Psi}\|_1^2
			\left\|p_{n-j} f_{s-j} p_{s-j}\right\|_2^2 \\
			& \lesssim_d
			\alpha\|\Psi\|_{\infty}^2 2^{-2 s(d-1)}
			\sum_{j \in I} \varphi\left(f p_{n-j}\right)  \\
			& \lesssim_d
			\alpha\|\Psi\|_{\infty}^2 2^{-2 s(d-1)}\|f\|_1 .
		\end{aligned}
	\end{equation}

	\noindent\textit{Cross-scale terms.}
	We next split the cross-scale interactions of \eqref{eq-u-v} into two parts:
	the nearby interactions,
	$
	1\leq j-i\leq \ell,
	$
	and the far interactions,
	$
	i\leq j-\ell-1.
$

	Consider first the range $1\le j-i\le\ell$. Then
	\[
	2\sum_{\substack{i,j\in I\\1\leq j-i\leq\ell}}
	|\langle u_i,u_j\rangle_2|
	\leq
	\sum_{\substack{i,j\in I\\1\leq j-i\leq\ell}}
	\bigl(\|u_i\|_2^2+\|u_j\|_2^2\bigr)
	\leq
	2\ell\sum_{j\in I}\|u_j\|_2^2.
	\]
	Hence, by \eqref{eq:off-single-raw},
	\begin{equation}\label{eq:off-near-raw}
		2\operatorname{Re}\sum_{\substack{i,j\in I\\1\le j-i\le\ell}}
		\langle u_i,u_j\rangle_2
		\leq
		2\sum_{\substack{i,j\in I\\1\le j-i\le\ell}}
		|\langle u_i,u_j\rangle_2|
		\lesssim_d
		\ell\alpha\|\Psi\|_\infty^2
		2^{-2s(d-1)}\|f\|_1.
	\end{equation}
	Similarly, \eqref{eq:off-single-avg} yields
	\begin{equation}\label{eq:off-near-avg}
		2\operatorname{Re}\sum_{\substack{i,j\in I\\1\le j-i\le\ell}}
		\langle v_i,v_j\rangle_2
		\leq
		2\sum_{\substack{i,j\in I\\1\le j-i\le\ell}}
		|\langle v_i,v_j\rangle_2|
		\lesssim_d
		\ell\alpha\|\Psi\|_\infty^2
		2^{-2s(d-1)}\|f\|_1.
	\end{equation}

	Consider now the range $i\le j-\ell-1$.
Since $s=n+\ell$, the far range $i\le j-\ell-1$ is contained
in the summation range of Lemma~\ref{lem:sector-packing} both
for $r=n$ and for $r=s$.  
Lemma~\ref{lem:sector-packing} therefore gives, for
$r\in\{n,s\}$,
\begin{equation}\label{eq:off-far-potential}
	\begin{aligned}
		&\sum_{\substack{i\in I\\ i\le j-\ell-1}}
		\int_{\R^d}
		|(\widetilde H_{j\nu}^{s,\Psi}
		*H_{i\nu}^{s,\Psi})(x-y)|
		p_{r-i}(y)f(y)p_{r-i}(y)\dd y\\
		&\qquad\lesssim_d
		\alpha\|\Psi\|_\infty^2
		2^{-2s(d-1)}\one_\M.
	 	\end{aligned}
\end{equation}
	Multiplying by $p_{r-j}(x)f(x)p_{r-j}(x)$, taking the trace,
	integrating in $x$, and summing over $j\in I$, \eqref{eq:massbudget} gives
	\begin{equation}\label{eq:off-far-positive}
		\begin{aligned}
			&\sum_{\substack{i,j\in I\\i\leq j-\ell-1}}
			\iint_{\R^d\times\R^d}
			|(\widetilde H_{j\nu}^{s,\Psi}
			*H_{i\nu}^{s,\Psi})(x-y)|\\
			&\qquad\times
			\tau\bigl(
			p_{r-i}(y)f(y)p_{r-i}(y)
			p_{r-j}(x)f(x)p_{r-j}(x)
			\bigr)\dd y\dd x\\
			&\qquad\lesssim_d
			\alpha\norm[\infty]{\Psi}^2
			2^{-2s(d-1)}\norm[1]{f},
			\qquad r\in\{n,s\}.
		\end{aligned}
	\end{equation}
	By the convolution identity \eqref{eq:convolutionpairing},
	\[
	\begin{aligned}
		\langle u_i,u_j\rangle_2
		&=
		\iint
		(\widetilde H_{j\nu}^{s,\Psi}*H_{i\nu}^{s,\Psi})(x-y)\\
		&\qquad\times
		\tau\!\left(
		p_{n-i}(y)f(y)p_{s-i}(y)
		p_{s-j}(x)f(x)p_{n-j}(x)
		\right)\dd y\dd x .
	\end{aligned}
	\]
The Cauchy--Schwarz inequality yields
	\[
	\begin{aligned}
		&\big|
		\tau\bigl(
		p_{n-i}(y)f(y)p_{s-i}(y)
		p_{s-j}(x)f(x)p_{n-j}(x)
		\bigr)
		\big|\\
		&\quad\leq
		\tau\bigl(
		p_{n-i}(y)f(y)p_{n-i}(y)
		p_{n-j}(x)f(x)p_{n-j}(x)
		\bigr)^{1/2}\\
		&\qquad\times
		\tau\bigl(
		p_{s-i}(y)f(y)p_{s-i}(y)
		p_{s-j}(x)f(x)p_{s-j}(x)
		\bigr)^{1/2}.
	\end{aligned}
	\]
	Therefore, the Cauchy--Schwarz inequality again in the sum over $(i,j)$ and the integral over $(x,y)$,
	together with \eqref{eq:off-far-positive} for $r=n$ and $r=s$, yields
	\[
	\begin{aligned}
	 \sum_{\substack{i,j\in I\\i\leq j-\ell-1}}
		\big|\langle u_i,u_j\rangle_2\big| 
		& \leq
		\Biggl[
		\sum_{\substack{i,j\in I\\i\leq j-\ell-1}}
		\iint
		|(\widetilde H_{j\nu}^{s,\Psi}*H_{i\nu}^{s,\Psi})(x-y)|\\
		& \times
		\tau\bigl(
		p_{n-i}(y)f(y)p_{n-i}(y)
		p_{n-j}(x)f(x)p_{n-j}(x)
		\bigr)\dd y\dd x
		\Biggr]^{1/2}\\
		& \times
		\Biggl[
		\sum_{\substack{i,j\in I\\i\leq j-\ell-1}}
		\iint
		|(\widetilde H_{j\nu}^{s,\Psi}*H_{i\nu}^{s,\Psi})(x-y)|\\
		&\hspace{8mm}\times
		\tau\bigl(
		p_{s-i}(y)f(y)p_{s-i}(y)
		p_{s-j}(x)f(x)p_{s-j}(x)
		\bigr)\dd y\dd x
		\Biggr]^{1/2}\\
		&\lesssim_d
		\alpha\norm[\infty]{\Psi}^2
		2^{-2s(d-1)}\norm[1]{f}.
	\end{aligned}
	\]
	Consequently,
	\begin{equation}\label{eq:off-far-raw}
		\begin{aligned}
			2\operatorname{Re}
			\sum_{\substack{i,j\in I\\ i\le j-\ell-1}}
			\langle u_i,u_j\rangle_2
			&\leq
			2\sum_{\substack{i,j\in I\\ i\le j-\ell-1}}
			|\langle u_i,u_j\rangle_2|\\
			&\lesssim_d
			\alpha\|\Psi\|_\infty^2
			2^{-2s(d-1)}\|f\|_1.
		\end{aligned}
	\end{equation}
	
	The same argument applies to the cross terms associated with $v_j$.
	Indeed, the second assertion of Lemma~\ref{lem:sector-packing} applies with
	$f_{s-i}$ in place of $f$, and
	\[
	\varphi(p_{r-j}f_{s-j}p_{r-j})
	=
	\varphi(fp_{r-j}),
	\qquad r\in\{n,s\},
	\]
	by the trace-preserving property of $\E_{s-j}$. Consequently,
	\begin{equation}\label{eq:off-far-avg}
		2\operatorname{Re}
		\sum_{\substack{i,j\in I\\ i\le j-\ell-1}}
		\langle v_i,v_j\rangle_2
		\lesssim_d
		\alpha\|\Psi\|_\infty^2
		2^{-2s(d-1)}\|f\|_1.
	\end{equation}
	
	Combining \eqref{eq:off-single-raw}--\eqref{eq:off-near-avg},
	\eqref{eq:off-far-raw}, and \eqref{eq:off-far-avg} yields
	\eqref{eq:off-fixed-two}. Since the remaining two terms in
	$b_{n-j,\ell}$ are adjoints, \eqref{eq:off-sector-target} follows.
	Finally, \eqref{eq:off-overlap-reduction}, \eqref{eq:off-sector-target},
	and \eqref{eq:angularpartition} give \eqref{eq:offprincipal}.
\end{proof}


\section{Nonhomogeneous kernels}\label{sec:nonhomogeneous}

Let $K\in\mathscr S'(\R^d)$ be locally integrable
away from the origin, with
$\widehat K\in L_\infty(\R^d)$.
Let $T_Kf=K*f$ denote the associated convolution
operator on $L_2(\A)$, defined via the Fourier
multiplier $\widehat K$.
 Following
\cite[(1.4)--(1.7)]{Seeger}, set
\[
V_R(\theta)=\int_R^{2R}|K(r\theta)|r^{d-1}\dd r,
\qquad V(\theta)=\sup_{R>0}V_R(\theta),
\]
\[
\eta(a)=\sup_{\substack{h>0\\R\geq ah}}
\int_{S^{d-1}}\int_R^{2R}
|K((r-h)\theta)-K(r\theta)|r^{d-1}\dd r\dd\sigma(\theta),
\qquad a\geq2.
\]
Write $C_{V,1}$ for \eqref{eq:angularconstant} with $V$ in place of
$\Omega$, and put
\[
A_K=\|\widehat K\|_\infty+C_{V,1}
+\int_2^\infty\eta(a)\frac{\dd a}{a}.
\]

\begin{theorem}\label{thm:nonhomogeneous}
	If $V\in L(\log^+L)(S^{d-1})$ and
	$\int_2^\infty\eta(a)\dd a/a<\infty$, then $T_K$ extends uniquely from
	$\core$ to a bounded linear map $L_1(\A)\to L_{1,\infty}(\A)$, with
	\begin{equation}\label{eq:nh-main}
		\|T_Kf\|_{1,\infty}\leq C_d A_K\|f\|_1.
	\end{equation}
	
\end{theorem}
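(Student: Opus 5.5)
The plan is to rerun the proof of Theorem~\ref{thm:main} with $K_j=\beta(2^{-j}|\cdot|)^2K$ in place of $K_j^\Omega$. The angular amplitude $\Omega$ is replaced by the majorant $V$, and a smoothness error is absorbed through $\eta$. As in Seeger, I would first reduce to the case where each $K_j$ is controlled by an angular function. Set $\widetilde V_j(\theta)=2^{-jd}\int\beta(2^{-j}r)^2|K(r\theta)|r^{d-1}\dd r\lesssim V(\theta)$. Then split $K_j=K_j^{\lo,s}+K_j^{\hi,s}$ according to whether $V(\theta)\le 2^{\delta s}\|V\|_1$, exactly as $\Omega=\Omega_s^\lo+\Omega_s^\hi$ in Subsection~\ref{subsec-kernel-decom}. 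Since $\sum_j|K_j|$ may not be controlled pointwise by a homogeneous profile, a preliminary radial regularization is needed. Convolve $K_j$ in the radial variable with a mollifier at scale $2^{j-s\epsilon}$, or equivalently replace $K_j$ by its average over dilations $r\mapsto r(1+t)$ with $|t|\le 2^{-s\epsilon}$. The difference is controlled in $L_1(\R^d)$ by $\eta(2^{s\epsilon})$, and this is exactly where $\int_2^\infty\eta(a)\dd a/a$ enters, since $\sum_s\eta(2^{s\epsilon})\lesssim_\epsilon\int_2^\infty\eta(a)\dd a/a$.

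The steps follow Section~\ref{sec:plan}. The good part is handled by $\|\widehat K\|_\infty\le A_K$ via Chebyshev, with $\alpha=\lambda/A_K$; the uniform $L_2$ bound for finite truncations is replaced by Seeger's bound, which gives $\sup_\xi|\sum_{j\in I}\widehat K_j(\xi)|\lesssim A_K$. The diagonal and off-diagonal bad parts are localized by $\zeta$, using Corollary~\ref{cor:badlocalvanishing}, which only needs the support of $K_j$ in $\{|x|\le2^{j+1}\}$. The high part $K_j^{\hi,s}$ is treated in $L_1$ by Young's inequality on the diagonal. On the off-diagonal, polar coordinates together with Lemma~\ref{lem:cumulative} give the same $\theta$-wise counting $h(\theta)\lesssim1+\log^+(V(\theta)/\|V\|_1)$, because $|K_j(r\theta)|r^{d}$ integrated against $\dd r/r$ is bounded by $V(\theta)$; this produces $C_{V,1}$. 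The low part is handled as follows. The term $K_j^{\lo,s}-\Gamma_j^{s}$ goes through Lemma~\ref{lem:remainder}, whose input from \cite[Lemma~2.2]{Seeger} holds for the regularized kernels with $M_N\lesssim 2^{(\delta+N\epsilon)s}\|V\|_1$; I would choose $\epsilon$ small relative to $\delta$ so that the decay survives. Lemmas~\ref{lem:diagprincipal} and~\ref{lem:offprincipal} only use the pointwise bound $|H_{j\nu}|\lesssim\|\Psi\|_\infty2^{-jd}\one_{\text{sector}}$ and the sector geometry \eqref{eq:sectornorms}--\eqref{eq:finecubecover}, which hold with $\|\Psi\|_\infty$ replaced by $2^{\delta s}\|V\|_1$. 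Lemma~\ref{lem:martingale} needs the decay $|\widehat{K_j^{\lo}}(\xi)|\lesssim2^{\delta s}\|V\|_1(2^j|\xi|)^{-\gamma}$ for some $\gamma>0$. For the regularized kernel this follows by one radial integration by parts at the cost of $2^{s\epsilon}$; a small loss is harmless since Lemma~\ref{lem:martingale} has $2^{-s}$ to spare, provided $\gamma$ is not too small relative to $\delta+\epsilon$.

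The final limiting steps (Steps 1--3 of Section~\ref{sec:completion}) carry over. Here $T_{K,[-J,J]}f\to T_Kf$ in $L_2(\A)$ because the truncated multipliers are uniformly bounded and converge a.e. to $\widehat K$, up to a multiple of $\delta_0$ that is excluded by local integrability as in \cite{Seeger}.

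I expect the main obstacle to be the regularization step. We must produce, for each $s$, a kernel with $\|\cdot\|_\infty$-type angular control suitable for the $TT^*$ arguments and for $M_N$, while keeping the error summable in $s$ in $L_1$ against the bad blocks $b_{n-j,\ell}+g_{n-j,\ell}$. For the $g$-blocks the relevant $L_1$ control is not directly available, so the error term should be organized using the cumulative identity \eqref{eq:cumulativeidentity}. I would first try Seeger's approach: the error between $K_j$ and its radial mollification, convolved with mean-zero pieces at scale $2^{j-s}$, is bounded by $\eta(2^{s\epsilon})$ times the $L_1$ norm, and Lemma~\ref{lem:cumulative} supplies the $L_1$ norms of the combined blocks.
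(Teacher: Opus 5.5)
\paragraph{A gap in the regularization step.} Your overall architecture matches the paper's: the regular Calder\'on--Zygmund decomposition, localization by $\zeta$, a regularized low-amplitude kernel treated in $L_2$ through the microlocal lemmas and Lemma~\ref{lem:martingale}, and $L_1$ error terms. The problem is that you regularize at scale $2^{j-s\epsilon}$ separately for each level $s$.

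The error kernel $K_j-J_j^{s}$ then changes with $s$. It can therefore only be applied to the single-level off-diagonal block $D_s^j=\sum_{n+\ell=s}(b_{n-j,\ell}+g_{n-j,\ell})$. Lemma~\ref{lem:cumulative} controls cumulative sums over $n+\ell\le h$, not single levels. Writing $D_s^j$ as the difference of the cumulative sums at $h=s$ and $h=s-1$ only gives $\sum_j\norm[1]{D_s^j}\lesssim s\norm[1]{f}$, and the available estimates give nothing better. The $g$-blocks are controlled only in $L_2$. Even the $b$-blocks at level $s$ already give $\sum_{\ell\le s-n_0}\sum_k\norm[1]{b_{k,\ell}}\lesssim s\norm[1]{f}$.

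The off-diagonal smoothing error is therefore bounded only by $\sum_s s\,\eta(2^{s\epsilon})\norm[1]{f}$. This is comparable to $\int_2^\infty\eta(a)\log a\,\frac{\dd a}{a}$, one logarithm more than the hypothesis allows. Your bound $\sum_s\eta(2^{s\epsilon})\lesssim\int_2^\infty\eta(a)\frac{\dd a}{a}$ covers only the diagonal part, where $\sum_j\norm[1]{b_{\mathrm d,s-j}}\le2\norm[1]{f}$ for each $s$. Summation by parts in $s$ does not help: $\norm[1]{J_j^{s+1}-J_j^{s}}$ is still only $O(\eta(2^{s\epsilon}))$, while the cumulative sums have size $s\norm[1]{f}$. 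Your $\theta$-wise treatment of the high-amplitude part on the unregularized $K_j$ is fine. The obstruction is specifically the $s$-dependent smoothing error.

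\paragraph{The missing idea: dyadic grouping of levels.} For $t=2^mn_0$, use one decomposition $K_j=H_j^t+R_j^t+S_j^t$ for all levels $s\in[t,2t)$. Here the radial mollification is at scale $2^{j-\delta_0t}$ and the angular cutoff is at $2^{\delta_0t}\norm[1]{V}$.
\begin{itemize}
\item The error kernels $R_j^t$ and $S_j^t$ are then independent of $s$ inside a group. They act on the combined block $B_j^t$, made of the diagonal levels $s\in[t,2t)$ and the off-diagonal blocks with $t\le n+\ell<2t$.
\item For this block, $\sum_j\norm[1]{B_j^t}\lesssim t\norm[1]{f}$, by Theorem~\ref{thm:regularcz}(ii) and \eqref{eq:cumulativeglobal} applied at $h=2t-1$ and $h=t-1$.
\item Because the $t$ are dyadic, $t\,\eta(2^{\delta_0t+2})\lesssim\int_{2^{\delta_0t/2+2}}^{2^{\delta_0t+2}}\eta(a)\frac{\dd a}{a}$, and these intervals are disjoint. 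Hence $\sum_t t\,\eta(2^{\delta_0t+2})\lesssim\int_2^\infty\eta(a)\frac{\dd a}{a}$.
\item In the same way, $\sum_t t\int_{\{V>2^{\delta_0t}\norm[1]{V}\}}V\dd\sigma\lesssim C_{V,1}$.
\item The price is losses $2^{C\delta_0t}\le2^{C\delta_0s}$ in the size and derivative bounds for $H_j^t$. These are absorbed by the $2^{-s/4}$ and $(1+s)2^{-s}$ decay in the $L_2$ lemmas, since $t\le s$ and $\delta_0$ is small.
\end{itemize}
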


\begin{proof}
	Suppose first that $f$ satisfies \eqref{eq:finitemodel}. The case
	$\|V\|_1=0$ is immediate: then $K$ is supported at the origin, and
	$\widehat K\in L_\infty$ implies that $T_K$ is a scalar multiple of
	the identity. Otherwise fix $\lambda>0$ and apply
	Theorem~\ref{thm:regularcz} at height $\alpha=\lambda/A_K$:
	\[
	f=g+ \bd+\boff.
	\]
	By Plancherel's theorem and \(\|g\|_2^2\lesssim_d
	\alpha\|f\|_1\),
	\[
	\|T_Kg\|_2^2
	\lesssim_d
	\alpha\|\widehat K\|_\infty^2\|f\|_1.
	\]
	Let $\zeta$ be the localization projection from
	Lemma~\ref{lem:cz-localization}, and write $b=\bd+\boff$.
	We first estimate $\zeta\sum_{j\in I}K_j*b\,\zeta$
	uniformly over finite $I\subset\mathbb Z$.
	
	Let
	\[
	K_j(x)=\beta(2^{-j}|x|)^2K(x).
	\]
	By Lemma~\ref{lemm:localsupport}, we have
	\[
	\zeta(K_j*b_{\mathrm d,k})\zeta
	=
	\zeta(K_j*b_{\mathrm{off},k})\zeta=0,
	\qquad j+k<n_0.
	\]
	Hence, using \eqref{eq:lagresolution} and setting
	$n=j+k$, we obtain
	\begin{equation}\label{eq:nh-local}
		\zeta\sum_{j\in I}K_j*b\,\zeta
		=\zeta\sum_{j\in I}K_j*
		\Bigg[\sum_{s\ge n_0}b_{\mathrm d,s-j}
		+\sum_{n\ge n_0}\sum_{\ell\ge1}
		(b_{n-j,\ell}+g_{n-j,\ell})\Bigg]\zeta.
	\end{equation}
	We keep the blocks $b_{k,\ell}$ and $g_{k,\ell}$
	together in the following estimates.
	Let $t=2^mn_0$, $m=0,1,\ldots$, and define
	\[
	B_j^t
	=
	\sum_{t\le s<2t}b_{\mathrm d,s-j}
	+
	\sum_{\substack{n\ge n_0,\ \ell\ge1\\
			t\le n+\ell<2t}}
	(b_{n-j,\ell}+g_{n-j,\ell}).
	\]
	Consequently,
	\begin{equation}\label{eq:nh-grouping}
		\zeta\sum_{j\in I}K_j*b\,\zeta
		=\zeta\sum_t\sum_{j\in I}K_j*B_j^t\,\zeta.
	\end{equation}
	Moreover, by Theorem~\ref{thm:regularcz}(ii) and
	\eqref{eq:cumulativeglobal}, we have
	\begin{equation}\label{eq:nh-interval}
		\sum_j\|B_j^t\|_1
		\lesssim_d t\|f\|_1.
	\end{equation}
	To estimate the sum on the
	right-hand side of \eqref{eq:nh-grouping},
	we decompose the kernels
	$K_j$ as follows.
	
	Set $\delta_0=[100(d+2)]^{-1}$ and choose
	$0\le\phi\in C_c^\infty((0,1/16))$ with $\int\phi=1$.
	Following \cite[Section~3]{Seeger}, put
	\[
	J_j^t(r\theta)=\beta(2^{-j}r)^2
	\int_0^\infty K((r-\rho)\theta)
	2^{\delta_0t-j}\phi(2^{\delta_0t-j}\rho)\dd\rho,
	\]
	\[
	\begin{aligned}
		H_j^t(r\theta)
		&=\one_{\{V\le2^{\delta_0t}\|V\|_1\}}(\theta)J_j^t(r\theta),\\
		R_j^t&=J_j^t-H_j^t,\qquad S_j^t=K_j-J_j^t.
	\end{aligned}
	\]
	Thus $K_j=H_j^t+R_j^t+S_j^t$. Seeger's estimates
	\cite[(3.4)--(3.5), (3.9)--(3.10)]{Seeger} give
	\begin{align}
		\sup_{j,r,\theta}r^{d+a}|\partial_r^aH_j^t(r\theta)|
		&\lesssim_{d,a}\|V\|_1\,2^{\delta_0t(a+2)},\quad a\ge0,
		\label{eq:nh-derivatives}\\
		\sup_j\|R_j^t\|_1
		&\lesssim_d\int_{\{V>2^{\delta_0t}\|V\|_1\}}V\dd\sigma,
		\label{eq:nh-angularerror}\\
		\sup_j\|S_j^t\|_1
		&\lesssim_d\eta(2^{\delta_0t+2}).
		\label{eq:nh-radialerror}
	\end{align}
	For $t\le s<2t$, define, as in \eqref{eq:Seegerkernels},
	\[
	H_{j\nu}^s(x)=H_j^t(x)\one_{E_\nu^s}\left(\frac{x}{|x|}\right),
	\qquad
	\Gamma_j^s(x)=\sum_\nu(P_\nu^sH_{j\nu}^s)(x).
	\]
	We use $H_j^t=\Gamma_j^s+(H_j^t-\Gamma_j^s)$ on the diagonal
	blocks and on $b_{n-j,\ell}$, with $s=n+\ell$ in the latter case;
	the full $H_j^t$ is used on $g_{n-j,\ell}$.
	Consequently, \eqref{eq:nh-grouping} becomes
	\begin{equation}\label{eq:nh-two-sums}
		\zeta\sum_{j\in I}K_j*b\,\zeta
		=\zeta(\mathcal U_I+\mathcal E_I)\zeta,
	\end{equation}
	where
	\begin{equation}\label{eq:nh-L2sum}
		\begin{aligned}
			\mathcal U_I=\sum_t\sum_{j\in I}\Bigg[
			\sum_{t\le s<2t}\Gamma_j^s*b_{\mathrm d,s-j}
			+\sum_{\substack{n\ge n_0,\ \ell\ge1\\t\le n+\ell<2t}}
			\bigl(\Gamma_j^{n+\ell}*b_{n-j,\ell}
			+H_j^t*g_{n-j,\ell}\bigr)\Bigg]
		\end{aligned}
	\end{equation}
	and
	\begin{equation}\label{eq:nh-L1sum}
		\begin{aligned}
			\mathcal E_I=\sum_t\sum_{j\in I}\Bigg[
			\sum_{t\le s<2t}(H_j^t-\Gamma_j^s)*b_{\mathrm d,s-j}
			+\sum_{\substack{n\ge n_0,\ \ell\ge1\\t\le n+\ell<2t}}
			(H_j^t-\Gamma_j^{n+\ell})*b_{n-j,\ell}
			+(R_j^t+S_j^t)*B_j^t\Bigg].
		\end{aligned}
	\end{equation}
	
	We first estimate $\mathcal U_I$ in $L_2(\A)$.
	By \eqref{eq:nh-derivatives} with $a=0$, the sector bounds
	\eqref{eq:sectornorms}, the interaction estimate
	\eqref{eq:interactionmajorant}, and the geometric estimates
	\eqref{eq:rectangle}--\eqref{eq:finecubecover} hold for
	$H_{j\nu}^s$, with $\|\Psi\|_\infty$ replaced by
	$C_d\|V\|_1 2^{2\delta_0t}$.
	Thus the proofs of Lemmas~\ref{lem:diagprincipal}
	and~\ref{lem:offprincipal} give
	\[
	\begin{aligned}
		\left\|\sum_{j\in I}
		\Gamma_j^s*b_{\mathrm d,s-j}\right\|_2
		&\lesssim_d
		\|V\|_1 2^{2\delta_0t}2^{-s/4}
		(\alpha\|f\|_1)^{1/2},\\
		\left\|\sum_{j\in I}
		\Gamma_j^s*b_{n-j,\ell}\right\|_2
		&\lesssim_d
		\|V\|_1 2^{2\delta_0t}
		(1+\ell)^{1/2}2^{-s/4}
		(\alpha\|f\|_1)^{1/2},
	\end{aligned}
	\]
	where $s=n+\ell$ in the second estimate.
	By \eqref{eq:nh-derivatives} and radial integration
	by parts, the Fourier decay estimate used in the
	proof of Lemma~\ref{lem:martingale} remains valid
	for $H_j^t$, with $\|\Psi\|_\infty$ replaced by
	$C_d\|V\|_1 2^{4\delta_0t}$.
	Thus, together with Lemma~\ref{lem:lag-blocks}(ii),
	we obtain
	\[
	\left\|\sum_{j\in I}
	H_j^t*g_{n-j,\ell}\right\|_2
	\lesssim_d
	\|V\|_1 2^{4\delta_0t}
	(1+s)2^{-s}
	(\alpha\|f\|_1)^{1/2},
	\qquad s=n+\ell.
	\]
	
	Summing these three estimates as in Section~\ref{sec:plan} gives
	\begin{equation}\label{eq:nh-L2bound}
		\|\mathcal U_I\|_2
		\lesssim_d
		\|V\|_1(\alpha\|f\|_1)^{1/2}.
	\end{equation}
	
	We next estimate $\mathcal E_I$ in $L_1(\A)$.
	For the first two terms in \eqref{eq:nh-L1sum},
	\cite[Lemma~2.2]{Seeger} (used already in the proof of
	Lemma~\ref{lem:remainder}) and \eqref{eq:nh-derivatives} give
	\[
	\|(H_j^t-\Gamma_j^s)*h_Q\|_1
	\lesssim_d\|V\|_1 2^{-s/8}\|h_Q\|_1
	\]
	whenever $\int_Qh_Q=0$ and $Q$ has side length $2^{j-s}$.
	As in Section~\ref{sec:plan}, summing over the bad cubes and
	using Theorem~\ref{thm:regularcz}(ii) and
	Lemma~\ref{lem:lag-blocks}(i) bounds the first two terms
	of \eqref{eq:nh-L1sum} in $L_1$ by $C_d\|V\|_1\|f\|_1$.
	The remaining terms in \eqref{eq:nh-L1sum} are handled by
	\eqref{eq:nh-interval}, \eqref{eq:nh-angularerror}, and
	\eqref{eq:nh-radialerror}:
	\[
	\begin{aligned}
		\Big\|\sum_t\sum_{j\in I}R_j^t*B_j^t\Big\|_1
		&\lesssim_d\|f\|_1\sum_t t
		\int_{\{V>2^{\delta_0t}\|V\|_1\}}V\dd\sigma
		\lesssim_d C_{V,1}\|f\|_1,\\
		\Big\|\sum_t\sum_{j\in I}S_j^t*B_j^t\Big\|_1
		&\lesssim_d\|f\|_1\sum_t t\eta(2^{\delta_0t+2}).
	\end{aligned}
	\]
	Since $\eta$ is nonincreasing and the intervals $[t/2,t]$
	are disjoint,
	\[
	\sum_t t\eta(2^{\delta_0t+2})
	\lesssim_d\int_2^\infty\eta(a)\frac{\dd a}{a}.
	\]
	Combining these estimates gives
	\begin{equation}\label{eq:nh-L1bound}
		\|\mathcal E_I\|_1\lesssim_d A_K\|f\|_1.
	\end{equation}
		By \eqref{eq:nh-two-sums}, \eqref{eq:nh-L2bound}, and
	\eqref{eq:nh-L1bound}, Chebyshev's inequality gives, uniformly in
	finite $I\subset\mathbb Z$,
	\begin{equation}\label{eq:nh-finitebad}
		d_\lambda\left(\zeta\sum_{j\in I}K_j*b\,\zeta\right)
		\lesssim_d
		\left(\lambda^{-2}\alpha\|V\|_1^2+\lambda^{-1}A_K\right)\|f\|_1.
	\end{equation}
Applying the preceding estimate to
$\zeta\sum_{j=-J}^{J}K_j*b\,\zeta$
and letting $J\to\infty$, we obtain the
same bound for $\zeta T_Kb\zeta$.

	By \eqref{eq:bad-reduction}, \eqref{eq:localized-projection},
	and the $L_2$ bound for $T_Kg$, we obtain
	\[
	\begin{aligned}
		d_{4\lambda}(T_Kf)
		&\le d_\lambda(T_Kg)+d_{3\lambda}(T_Kb)\\
		&\le d_\lambda(T_Kg)+d_\lambda(\zeta T_Kb\zeta)
		+2\varphi(1-\zeta)\\
		&\lesssim_d
		\left[\lambda^{-2}\alpha
		(\|\widehat K\|_\infty^2+\|V\|_1^2)
		+\lambda^{-1}A_K+\alpha^{-1}\right]\|f\|_1\\
		&\lesssim_d \frac{A_K}{\lambda}\|f\|_1,
	\end{aligned}
	\]
	where $\alpha=\lambda/A_K$ and $\|V\|_1\le A_K$.
	The extension to $L_1(\A)$ follows as in
	Section~\ref{sec:completion}.
	
\end{proof}

\section*{Acknowledgments}
This paper is supported by National Key R\&D Program of China (No. 2023YFA1010800),  NSFC (Nos. 12125109, W2411005, 12361131578),  Natural Science Foundation of Hunan Province (Nos. 2024JJ1010, 2025ZYJ002, 2024RC1016), China Scholarship Council (No. 202306370152) and Hunan Provincial Innovation Foundation for postgraduate (No. CX20240162).

During the preparation of this work, the authors used GPT-5.6 Sol to assist with language editing and
refinement of mathematical arguments. The authors take full responsibility for the content of this paper.

\end{document}